\let\mc\mathcal
\let\euc\mathcal
\let\mathcal\mc
\let\Medskip\medskip
\def\medskip{\par\Medskip}
\let\Bigskip\bigskip
\def\bigskip{\par\Bigskip}
\let\Maketitle\maketitle
\def\maketitle{\Maketitle\thispagestyle{empty}\let\maketitle\empty}
\newtheorem{thm}{Theorem}[section]
\newtheorem{cor}[thm]{Corollary}
\newtheorem{lem}[thm]{Lemma}
\newtheorem{prop}[thm]{Proposition}
\numberwithin{equation}{section}
\theoremstyle{definition}
\newtheorem*{rem}{Remark}
\newtheorem*{example}{Example}
\def\beq{\begin{equation}}
\def\eeq{\end{equation}}
\def\be{\begin{equation*}}
\def\ee{\end{equation*}}
\def\bea{\begin{eqnarray*}}
\def\eea{\end{eqnarray*}}
\def\bean{\begin{eqnarray}}
\def\eean{\end{eqnarray}}
\def\Ref#1{{\rm(\ref{#1})}}
\def\gadv#1{\global\advance#1 1}
\def\newbi{\itemlet 96} \newbi
\def\bitem{\gadv\itemlet\endgraf\hangindent1.5\parindent
\hglue-.5\parindent\textindent{\upshape\rlap{\char\the\itemlet}\hp{b})}}
\def\Bitem{\gadv\itemlet\endgraf\hangindent2.5\parindent
\hglue.5\parindent\textindent{\upshape\rlap{\char\the\itemlet}\hp{A}.}}
\def\iitem{\gadv\itemrm\endgraf\hangindent1.5\parindent\hglue-.5\parindent
\textindent{\upshape\hp{v}\llap{\romannumeral\the\itemrm})}}
\let\Bbb\mathbb
\def\mkern{\kern\mathsurround}
\let\math\m@th
\def\$#1${\mkern$\m@th#1$}
\let\bls\baselineskip \let\ignore\ignorespaces
\let\adv\advance
\def\vsk#1>{\vskip#1\bls}
\def\vv#1>{\vadjust{\vsk#1>}\ignore}
\def\vvn#1>{\vadjust{\nobreak\vsk#1>\nobreak}\ignore}
\def\vvv#1>{\vskip\z@\vsk#1>\nt\ignore}
\def\vvgood{\vadjust{\penalty-500}}
\def\Goodbreak{\par\penalty-\@m}
\def\vvGood{\vadjust{\penalty-\@m}}
\def\nnGood{\noalign{\penalty-\@m}}
\def\wwgood#1:#2>{\vv#1>\vvgood\vv#2>\vv0>}
\def\vskgood#1:#2>{\vsk#1>\goodbreak\vsk#2>\vsk0>}
\def\mathbox#1{\hbox{\m@th$#1$}}
\def\>{\relax\ifmmode\mskip.666667\thinmuskip\relax\else\kern.111111em\fi}
\def\+{\relax\ifmmode\mskip.333333\thinmuskip\relax\else\kern.0555556em\fi}
\def\<{\relax\ifmmode\mskip-.333333\thinmuskip\relax\else\kern-.0555556em\fi}
\def\?{\relax\ifmmode\mskip-.666667\thinmuskip\relax\else\kern-.111111em\fi}
\let\dsize\displaystyle \let\tsize\textstyle
\let\ssize\scriptstyle \let\sss\scriptscriptstyle
\def\fratop{\genfrac{}{}{0pt}1}
\def\satop#1#2{\fratop{\ssize#1}{\ssize#2}}
 \let\vp\vphantom \let\hp\hphantom
  \let\nt\noindent
\def\hline{\hbox to\hsize}
\let\cline\centerline \let\lline\leftline \let\rline\rightline
\def\nn#1>{\noalign{\vskip#1\p@}} \def\NN#1>{\openup#1\p@}
\let\Lim\lim \def\lim{\Lim\limits} \let\Sum\sum \def\sum{\Sum\limits}
\let\Prod\prod \def\prod{\Prod\limits}
\let\Int\int \def\int{\Int\limits}
\let\colon\: \def\:{^{\vp|}}
\def\const{\mathrm{const}}
\let\alb\allowbreak
 \let\x\times 
\let\le\leqslant \let\ge\geqslant
\let\bra\langle \let\ket\rangle
\def\Bra{\bigl\bra} \def\Ket{\bigr\ket}
\let\Hat\widehat \let\Tilde\widetilde
\def\lsym#1{#1\alb\ldots\relax#1\alb}
\def\lc{\lsym,}   
\def\llc{\,,\alb\ {\ldots\ ,}\alb\ }
\let\texspace\ \def\ {\ifmmode\alb\fi\texspace}
\def\Line#1{\kern-.5\hsize\hline{\m@th$\dsize#1$}\kern-.5\hsize}
\def\Lline#1{\kern-.5\hsize\lline{\m@th$\dsize#1$}\kern-.5\hsize}
\def\Cline#1{\kern-.5\hsize\cline{\m@th$\dsize#1$}\kern-.5\hsize}
\def\Rline#1{\kern-.5\hsize\rline{\m@th$\dsize#1$}\kern-.5\hsize}
\def\Ll@p#1{\llap{\m@th$#1$}} \def\Rl@p#1{\rlap{\m@th$#1$}}
 \def\Cl@p#1{\llap{\m@th$#1$\hss}}
\def\Llap#1{\mathchoice{\Ll@p{\dsize#1}}{\Ll@p{\tsize#1}}{\Ll@p{\ssize#1}}%
{\Ll@p{\sss#1}}}
\def\Clap#1{\mathchoice{\Cl@p{\dsize#1}}{\Cl@p{\tsize#1}}{\Cl@p{\ssize#1}}%
{\Cl@p{\sss#1}}}
\def\Rlap#1{\mathchoice{\Rl@p{\dsize#1}}{\Rl@p{\tsize#1}}{\Rl@p{\ssize#1}}%
{\Rl@p{\sss#1}}}
\def\LRtph#1#2{\setbox\z@\hbox{#1}\dimen\z@\wd\z@\hbox{\hbox to\dimen\z@{#2}}}
\def\LRph#1#2{\LRtph{\m@th$#1$}{\m@th$#2$}}
\def\Lto#1{\setbox\z@\hbox{\m@th$\tsize{#1}$}%
\mathrel{\mathop{\hbox to\wd\z@{\rightarrowfill}}\limits#1}}
\def\Lgets#1{\setbox\z@\hbox{\m@th$\tsize{#1}$}%
\mathrel{\mathop{\hbox to\wd\z@{\leftarrowfill}}\limits#1}}
\def\ftext#1{{\let\thefootnote\relax\footnotetext{\vsk-.8>\nt#1}}}
\def\textindent#1{\indent\llap{#1\enspace}\ignorespaces}
\let\al\alpha
\let\bt\beta
 \let\eps\varepsilon \let\epsilon\eps
\let\la\lambda
 \let\phi\varphi
\def\C{\Bbb C}
\def\Z{\Bbb Z}
\let\bs\boldsymbol
\def\slN{\mathfrak{sl}_N}
\def\Cl{\C^{\>\lb}}
\def\Sl{S_{\+\lb}}
\def\lab{\bs\la}
\def\lb{\bs l}
\def\tb{\bs t}
\def\Tb{\bs T}
\def\Ic{\mc I}
\def\Ae{\euc A}
\def\Be{\euc B}
\def\Fe{\euc F}
\def\Ie{{\euc I}}
\def\Je{{\euc J}}
\def\Te{\euc T}
\def\Ue{\euc U}
\def\wc{\check w}
\def\uh{\hat u}
\def\Uhe{\Hat\Ue}
\def\pt{\tilde p}
\def\ut{\tilde u}
\def\Tt{\Tilde T}
\def\Tte{\Tilde\Te}
\def\Ute{\Tilde\Ue}
\def\wt{\Tilde w}
\def\yt{\tilde y}
\def\Qu#1{Q^{\+#1}}
\def\QT{\Qu\Te}
\def\ts{{\+\tb}}
\def\DU{D_{\+\Ue}}
\def\DUp{D_{\+\Ue'}}
\def\DUpp{D_{\+\Ue''}}
\def\DUh{D_{\+\Uhe}}
\def\DUhp{D_{\+\Uhe'}}
\def\DUt{D_{\+\Ute}}
\def\DUtp{D_{\+\Ute'}}
\def\DUti_#1{D_{\+\Ute_{#1}}}
\def\ij{i,\+j}
\def\il{i\+l}
\def\KZ/{{\slshape KZ\/}}
\def\qKZ/{{\slshape qKZ\/}}
\def\XXX/{{\slshape XXX\/}}
\def\XXZ/{{\slshape XXZ\/}}
\begin{document}

\vp1
\vsk->

\title[\XXZ/\+-type Bethe ansatz equations and quasi\+-polynomials]
{\XXZ/\>-type Bethe ansatz equations and quasi\+-polynomials}

\author[Jian Rong Li \,and \,Vitaly Tarasov]
{Jian Rong Li$^\star$ and \,Vitaly Tarasov$\<^*$}

\maketitle

\begin{center}
{\it\$^\star$Department of Mathematics, Lanzhou University, Lanzhou 730000,
P.\,R.\,China\/}

\vsk.5>
{\it\$\kern-.4em^*\<$Department of Mathematical Sciences,
Indiana University\,--\>Purdue University Indianapolis\kern-.4em\\
402 North Blackford St, Indianapolis, IN 46202-3216, USA\/}

\vsk.5>
{\it\$^*\<$St.\,Petersburg Branch of Steklov Mathematical Institute\\
Fontanka 27, St.\,Petersburg, 191023, Russia\/}
\end{center}

\ftext{\math$^\star\<${\sl E\>-mail}:\enspace lijr@lzu.edu.cn\>,
\>lijr07@gmail.com\\
$^*\<${\sl E\>-mail}:\enspace vt@math.iupui.edu\>, vt@pdmi.ras.ru\\
$^*\+$Supported in part by NSF grant DMS-0901616}

\begin{abstract}
We study solutions of the Bethe ansatz equation for the \XXZ/\+-type
integrable model associated with the Lie algebra $\slN$. We give a
correspondence between solutions of the Bethe ansatz equations and collections
of quasi\+-polynomials. This extends the results of E.\,Mukhin and
A.\>Varchenko for the \XXX/-type model and the trigonometric Gaudin model.
\end{abstract}

\section{Introduction}
\label{intro}

In this paper we study solutions of the Bethe ansatz equation for
the \XXZ/\+-type integrable models associated with the Lie algebra $\slN$,
see \Ref{Be}. These equations arise in the Bethe ansatz method of computing
eigenvalues and eigenvectors of commuting Hamiltonians of integrable models.
The method gives the eigenvalues and eigenvectors by evaluating explicit
rational functions on solutions of the Bethe ansatz equations,
see for instance \cite{KBI} and references therein.

\vsk.2>
Solutions of the Bethe ansatz equations for the Gaudin model, both rational
and trigonometric, as well for the \XXX/-type model have been studied by
E.\,Mukhin and A.\>Varchenko in \cite{MV1}\,--\,\cite{MV3}. They established
a correspondence between solutions of the Bethe ansatz equations and spaces
of polynomials, quasi\+-polynomials or quasi\+-exponentials with certain
properties. In this paper we extend the results of \cite{MV2}, \cite{MV3}
to the case of the \XXZ/-type Bethe ansatz equations.

\vsk.2>
Our method to construct a collection of quasi\+-polynomials corresponding to
a solution of the Bethe ansatz equations is not completely analogous to that of
\cite{MV2}, \cite{MV3}. As a result, we managed to weaken technical assumptions
on solutions of the Bethe ansatz equations compared with those imposed in
\cite{MV3}. Another advantage is that our method works withut restrictions
for the root of unity case.

\vsk.2>
The plan of the paper is as follows. In Section \ref{s-two}, we describe
the \XXZ/-type Bethe ansatz equations and refine the problem. In particular,
we define regular and admissible solutions of the Bethe ansatz equations.
In Section \ref{s-qpol}, we introduce regular collections of
quasi\+-polynomials and show that each such a collection gives a regular
solution of the Bethe ansatz equations. We also formulate there the main result
of the paper, Theorem \ref{main}, which says that every admissible regular
solution of the Bethe ansatz equations comes from a regular collection of
quasi\+-polynomials. We prove Theorem \ref{main} in Section \ref{s-proof}.
In Section \ref{s-diff}, for a collection of quasi\+-polynomials \>$\Ue$
we consider the monic difference operator \>$\DU$ whose kernel is generated
by \>$\Ue$. We show that the operator \>$\DU$ has rational coefficients if and
only if there is a regular collection of quasi\+-polynomials \>$\Ute$ such that
\>$\DUt=\DU$. Also, for each solution \>$\tb$ of the Bethe ansatz equations
we define a difference operator \>$D^\ts\?$ and show that the collection of
quasi\+-polynomials \>$\Ue$ associated with \>$\tb$ gives a basis of the kernel
of \>$D^\ts\?$, that is, \>$D^\ts\?=\DU$. The Appendix contains necessary
technical identities.

\vsk.2>
The authors thank E.\,Mukhin for helpful discussions.

\section{Bethe ansatz equations and quasi\+-polynomials}
\label{s-two}
\subsection{Bethe ansatz equations}
\label{s-bae}
Throughout the paper we fix a complex number \>$q$ such that \>$q\ne0,\pm\+1$.
More precisely, we fix the value of \,$\log\+q$ and for any complex number
$\al$ set \>$q^{\>\al}\?=\exp\>(\al\>\log\+q)$.

\vsk.2>
Fix an integer $N\ge 2$. Given a collection \>$\lb=(\>l_1\lc l_{N-1})$ of
nonnegative integers, consider the space
\,${\Cl\?=\+\C^{\>{l_1}}\!\lsym\oplus\C^{\>l_{N-1}}}\?$. We label coordinates
on this space by two indi\-ces, the superscript enumerating the summands and
the subscript enumerating coordinates within a summand. That is, we write
\vvn.2>
\be
\tb\,=\,(\+t^{(1)}_1\lc t^{(1)}_{l_1}\!\llc
t^{(N-1)}_1\lc t^{(N-1)}_{l_{N-1}}\+)\,\in\,\Cl\>.
\vv.1>
\ee
The product of the symmetric groups
${\Sl\<=S_{l_1}\!\<\lsym\x S_{l_{N-1}}}\!$ acts on $\Cl\!$ by permuting
coordinates with the same superscript. The \$\Sl$\+-orbit of a single point
will be called an {\it elementary \$\Sl$-orbit\/}.

\vsk.2>
Let $T_1(x)\lc T_{N-1}(x)$ be monic polynomials in one variable and
$\la_1\lc\la_N\<$ be complex numbers. Consider a system of equations
on the variables \>$t^{(a)}_b\!$, \,$a=1\lc\+N-1$, \>$b=1\lc\>l_a$,
\vvn-.4>
\begin{align}
\label{Be}
T_i(t_j^{(i)}q^2)\,\prod_{r=1}^{l_{i-1}}\,(t_j^{(i)}q-t_r^{(i-1)}q^{-1})\;
\prod_{s=1}^{l_{i+1}}\,(t_j^{(i)}\!-t_s^{(i+1)})\;
\prod_{\satop{k=1}{k\ne j}}^{l_i}\,({}& t_j^{(i)}q^{-1}\!-t_k^{(i)}q)\,=\,{}
\\[1pt]
{}={}\,q^{\>2\+(\la_{i+1}-\la_i+\>l_i)-\>l_{i-1}-\>l_{i+1}}\;
T_i(t_j^{(i)})\,\prod_{r=1}^{l_{i-1}}\,(t_j^{(i)}\!-t_r^{(i-1)})\;
\prod_{s=1}^{l_{i+1}}\,(t_j^{(i)}q^{-1}\!-{}& t_s^{(i+1)}q)\,
\prod_{\satop{k=1}{k\ne j}}^{l_i}\,(t_j^{(i)}q-t_k^{(i)}q^{-1})\,,
\notag
\end{align}
$i=1\lc N-1$, $j=1\lc l_i$. Here and later we assume that \,$l_0\<=\+l_N\<=0$,
unless otherwise stated. Equations \Ref{Be} are called the \XXZ/\+-type Bethe
ansatz equations associated with the data \,$\lb=(l_1\lc l_{N-1})$,
$\Tb=(T_1\lc T_{N-1})$, $\lab=(\la_1\lc\la_N)$.

\vsk.2>
The group $\Sl\?$ acts on solutions of equations \Ref{Be} --- \>for every
solution \>$\tb$ of \Ref{Be}, all points of the \$\Sl$\+-orbit
of \>$\tb$ are solutions of \Ref{Be} as well.

\vsk.2>
Equations \Ref{Be} arise in the Bethe ansatz method for the quantum integrable
model defined on the irreducible finite-dimensional representation
of the quantum affine algebra $U_q(\Hat{\slN})$. The polynomials
$T_1(x),T_2(x\+q^{-2})\lc T_{N-1}(x\+q^{-2\+(N\<-1)})$ are the Drinfeld
polynomials of the representation and the parameters
\>$q^{\>2\la_1}\!\lc\>q^{\>2\la_N}\!$ describe the algebra of commuting
Hamiltonians of the model. Notice that in the literature equations \Ref{Be}
are usually written in the form
\enlargethispage{.6\bls}
\begin{align}
\label{Bae}
\frac{T_i(t_j^{(i)}q^2)}{T_i(t_j^{(i)})}\,\,
\prod_{r=1}^{l_{i-1}}\,\frac{t_j^{(i)}q-t_r^{(i-1)}q^{-1}}
{t_j^{(i)}\!-t_r^{(i-1)}}\,\,
\prod_{s=1}^{l_{i+1}}\,\frac{t_j^{(i)}\!-t_s^{(i+1)}}
{t_j^{(i)}q^{-1}\!-t_s^{(i+1)}q}\,\,
& \prod_{\satop{k=1}{k\ne j}}^{l_i}\,\frac{t_j^{(i)}q^{-1}\!-t_k^{(i)}q}
{t_j^{(i)}q-t_k^{(i)}q^{-1}}\;={}
\\[6pt]
& \;{}=\,q^{\>2\+(\la_{i+1}-\la_i+\>l_i)-\>l_{i-1}-\>l_{i+1}}\,,
\notag
\\[-24pt]
\notag
\end{align}
\,$i=1\lc N-1$, $j=1\lc l_i$.

\subsection{Regular solutions of Bethe ansatz equations}
\label{s-rs}
For \>$i=1\lc N-1$, define the polynomials
\vvn-.7>
\beq
\label{p}
p_i(x\+;\tb\+)\,=\,\prod_{j=1}^{l_i}\,(x-t_j^{(i)})
\vv-.8>
\eeq
and
\begin{align}
\label{P}
P_i(x\+;\tb\+)\,={} &\,q^{\>2\+\la_{i+1}}\+p_i(x\+q^{\>2};\tb\+)\,
p_{i-1}(x\+;\tb\+)\,p_{i+1}(x\+q^{-2};\tb\+)\,T_i(x)\>+{}
\\[4pt]
&\?{}+\>q^{\>2\+\la_i}\+p_i(x\+q^{-2};\tb\+)\,p_{i-1}(x\+q^{\>2};\tb\+)\,
p_{i+1}(x\+;\tb\+)\,T_i(x\+q^{\>2})\,.
\notag
\\[-12pt]
\notag
\end{align}
Then equations \Ref{Be} take the form
\vvn.3>
\beq
\label{Bep}
P_i(t^{(i)}_j;\tb\+)\,=\,0\,,\qquad i=1\lc N-1\,,\quad j=1\lc l_i\,.\kern-4em
\vv.3>
\eeq
A solution \>$\tb$ of equations \Ref{Be} is called {\it regular\/} if for any
\>${i=1\lc N-1}$, and any subset \>$I\<\subset\{\+1\lc l_i\}$, the following
condition holds: suppose that all the coordinates \>$t^{(i)}_j\!$,
\>$j\<\in I\<$ are equal with the common value \>$t^{(i)}_{\+I}\?$; then
\vvn.3>
\beq
\label{Bem}
\frac{d^{\>k}}{d\+x^{\+k}}\+P_i(x\+;\tb\+)|_{x=t^{(i)}_{\+I}}\>=\,0\,,\qquad
k=0\lc|\+I\+|-1\,,\kern-2em
\vv.4>
\eeq
where and \>$|\+I\+|$ is the cardinality of $I$. Clearly, if for each
\>${i=1\lc N-1}$, all the coor\-di\-nates \>$t^{(i)}_j\!$, \>$j=1\lc l_i$,
are distinct, the solution \>$\tb$ is regular.

\vsk.2>
If \>$\tb$ is a regular solution of equations \Ref{Be}, then all
points of the \$\Sl$\+-orbit of \>$\tb$ are regular solutions of equations
\Ref{Be} as well.

\vsk.2>
The next proposition is valid by definition of a regular solution of equations
\Ref{Be}.

\begin{prop}
\label{reg}
A point \>$\tb$ is a regular solution of equations \Ref{Be} if and only if
for any \>$i=1\lc N-1$, the polynomial \,$p_i(x\+;\tb\+)\<$ divides the
polynomial \,$P_i(x\+;\tb\+)$.
\end{prop}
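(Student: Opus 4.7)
The claim is essentially a reformulation of the regularity condition in terms of divisibility, so the proof reduces to unpacking definitions and organizing the coincident coordinates by multiplicity.

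The plan is as follows. First, I would observe that equations \Ref{Be} are equivalent to equations \Ref{Bep}, so merely being a solution means that each coordinate $t^{(i)}_j$ is a root of $P_i(x;\tb)$. The extra regularity requirement is what allows one to upgrade this to a statement about multiplicities.

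For a fixed $i$, I would decompose the multiset $\{t^{(i)}_1,\ldots,t^{(i)}_{l_i}\}$ into its distinct values. For each distinct value $v$, let $I_v\subset\{1,\ldots,l_i\}$ be the set of indices $j$ with $t^{(i)}_j=v$, and let $m_v=|I_v|$, so that
\be
p_i(x;\tb\+)\,=\,\prod_v\,(x-v)^{m_v}.
\ee
Then $p_i(x;\tb\+)$ divides $P_i(x;\tb\+)$ if and only if, for every such $v$, the value $v$ is a root of $P_i(x;\tb\+)$ of order at least $m_v$, which in turn is equivalent to
\be
\frac{d^{\>k}}{d\+x^{\+k}}\+P_i(x\+;\tb\+)|_{x=v}\>=\,0\qquad\text{for}\ k=0\lc m_v-1\,.
\ee

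This already gives one direction: if $\tb$ is a regular solution, apply \Ref{Bem} to each maximal coincidence set $I=I_v$, with $t^{(i)}_{\+I}=v$ and $|I|=m_v$, to conclude $p_i\mid P_i$. For the converse, suppose $p_i\mid P_i$ and let $I\subset\{1,\ldots,l_i\}$ be any subset on which the coordinates take a common value $t^{(i)}_{\+I}=v$. Then $I\subset I_v$, so $|I|\le m_v$, and the divisibility yields vanishing of derivatives at $v$ up to order $m_v-1\ge|I|-1$, which is precisely \Ref{Bem}. Hence $\tb$ is regular.

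The only potential subtlety is making sure that being a solution of \Ref{Be}, i.e. \Ref{Bep}, is subsumed in the divisibility statement: this is immediate because $p_i\mid P_i$ forces every $t^{(i)}_j$ to be a root of $P_i$. So there is no real obstacle; the proposition is a direct consequence of the elementary fact that a polynomial $f$ divides $g$ iff each root of $f$ is a root of $g$ of at least the same multiplicity, translated through the derivative criterion for multiple roots.
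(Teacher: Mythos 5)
Your proof is correct. The paper itself supplies no argument here, remarking only that the proposition ``is valid by definition of a regular solution,'' and your unpacking---decomposing the coordinates into maximal coincidence sets $I_v$ of size $m_v$, translating $p_i\mid P_i$ into the vanishing of derivatives up to order $m_v-1$, and noting that \Ref{Bem} for arbitrary $I$ reduces to the maximal case since $I\subseteq I_v$ forces $|I|\le m_v$---is exactly the intended reading of the definition.
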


\section{Quasi\+-polynomials}
\label{s-qpol}
\subsection{Quasi\+-polynomials}
\label{s-pol}
A quasi\+-polynomial $f(x)$ of type \>$\al$ is an expression of the form
$f(x)=x^{\>\al}p(x,\log x)$, where $\al\in\C$ and $p(x,s)$ is a polynomial.
% To define a fractional power $x^\al\!$, we use some continuous branch of
% the argument of $x$ whose particular choice will not be essential.
Call the quasi\+-polynomial $f(x)$ \+log{\it-free\/} if $p(x,s)$ does not
depend on $s$. Say that a polynomial $r(x)$ divides $f(x)$ if $r(x)$ divides
$p(x,s)$. The product of quasi\+-polynomials of types \>$\al$ and \>$\bt$
is supposed to be of type \>$\al+\bt$. Clearly, the product of two nonzero
quasi\+-polynomials is not zero.

\vsk.2>
We will use only algebraic properties of quasi\+-polynomials.
The key relation is: if $f(x)=x^{\>\al}p(x,\log x)$, then
\vvn-.4>
\be
f(x\+q^{\>\bt})\,=\,
q^{\>\al\+\bt}x^{\>\al}p(x\+q^{\>\bt},\log x+\bt\>\log\+q)\,.
\ee

\vsk.3>
Given functions $g_1(x)\lc g_k(x)$, their discrete Wronskian
$W_k[\>g_1\lc g_k]$ is defined by the rule
\vvn-.2>
\beq
\label{Wr}
W_k[\>g_1\lc g_k](x)\,=\,
\det\>\bigl(g_i(x\+q^{-2(j-1)})\bigr)_{\ij=1}^k.
\eeq

\vsk.2>
Let $\lab=(\la_1\lc \la_N)$ be a collection of complex numbers.
By definition, a {\it collection of quasi\+-polynomials of type} $\lab$
is a sequence of quasi\+-polynomials \>$\Ue=(u_1\lc u_N)$ such that for
every \>$i=1\lc N\?$, the quasi\+-polynomial \>$u_i$ has type \>$\la_i$,
and \>$W_{N\>}[\>u_1\lc u_N\+]\ne 0$. We call the collection \>$\Ue$
{\it semiregular\/} if \>$W_{N\>}[\>u_1\lc u_N\+]$ is \>log\+-free.

\vsk.2>
For a semiregular collection of quasi\+-polynomials \>$\Ue=(u_1\lc u_N)$ of type
\vvn.16>
$\lab$, a sequence of monic polynomials \>$\Te=(T_1\lc T_N)$ is called
a {\it preframe\/} of \>$\Ue$ if for any $k=1\lc N-1$ and any subset
$\{\+i_1\lc i_k\}\subset\{1\lc N\+\}$, the product
$\prod_{i=1}^k\,\prod_{j=0}^{k-i}\,T_{N\<-\+i\++1}(x\+q^{-2j})$
divides the quasi\+-polynomial $W_k[\>u_{i_1}\lc u_{i_k}](x)$ of
type $\la_{i_1}\!\lsym+\+\la_{i_k}$, and
\beq
\label{WN}
W_{N\>}[\>u_1\lc u_N\+](x)\,=\,\const\,\,x^{\+\la_1\lsym+\>\la_N}\,
\prod_{i=1}^N\,\prod_{j=0}^{N\?-i}\,T_{N\<-\+i\++1}(x\+q^{-2j})\,.
\vv-.7>
\eeq
Denote
\vvn-.6>
\beq
\label{QT}
\QT_k(x)\,=\,
\prod_{i=1}^k\,\prod_{j=0}^{k-i}\;\<T_{N\<-\+i\++1}(x\+q^{-2j})\,,
\qquad k=1\lc N\>.\kern-2em
\vv-.2>
\eeq
Say that a preframe \>$\Te$ is stronger than a preframe
\>$\Tte=(\Tt_1\lc\Tt_N)$ if \>$\Qu{\Tte}_k(x)$ divides \>$\QT_k(x)$
for any $k=1\lc N-1$.

\vsk.2>
Until the end of this section we fix $\lab=(\la_1\lc \la_N)$ and assume that
every collection of quasi\+-polynomials is of type $\lab$.

\begin{lem}
\label{lemframe}
Let \,\>$\Ue=(u_1\lc u_N)$ be a semiregular collection of quasi\+-polynomials.
There is a preframe \,$\Te=(T_1\lc T_N)$ of \;$\Ue$ such that for every
\>$k=1\lc N\!$, the polynomial \,$\QT_k(x)$, see \>\Ref{QT},
is the greatest common divisor of the quasi\+-polynomials
\;$W_k[\>u_{i_1}\lc u_{i_k}](x)$, where $\{\+i_1\lc i_k\}$
runs over all \>\$k$-element subsets of $\{1\lc N\+\}$ and
\;$W_k[\>u_{i_1}\lc u_{i_k}](x)$ has type
$\la_{i_1}\!\lsym+\+\la_{i_k}$.
\end{lem}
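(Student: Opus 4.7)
The plan is to construct the preframe $\Te$ by reading off the $T_m$'s from the greatest common divisors of the Wronskians of $\Ue$. For each $k=0,1,\ldots,N$, let $G_k(x)\in\C[x]$ denote the monic greatest common polynomial divisor of the quasi-polynomials $W_k[u_{i_1},\ldots,u_{i_k}](x)$ as $\{i_1,\ldots,i_k\}$ ranges over all $k$-element subsets of $\{1,\ldots,N\}$; set $G_0=1$. Semiregularity of $\Ue$ ensures that $G_N(x)$ coincides, up to a nonzero constant, with the polynomial part of $W_N(x)/x^{\la_1+\cdots+\la_N}$. Since the formula \eqref{QT} telescopes as $\QT_k(x)=\QT_{k-1}(xq^{-2})\prod_{m=N-k+1}^{N}T_m(x)$, requiring $\QT_k=G_k$ for every $k$ forces us to define
\[
T_{N-k+1}(x)\,=\,\frac{G_k(x)\,G_{k-2}(xq^{-2})}{G_{k-1}(x)\,G_{k-1}(xq^{-2})}\,,\qquad k=1,\ldots,N,
\]
with the convention $G_{-1}=1$. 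The core of the proof then reduces to verifying that each such expression is an honest monic polynomial---equivalently, to establishing the divisibility
\[
G_{k-1}(x)\,G_{k-1}(xq^{-2})\ \Bigm|\ G_k(x)\,G_{k-2}(xq^{-2})\quad\text{in}\ \ \C[x].
\]

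My key tool is Jacobi's identity (Dodgson condensation) applied to the $k\times k$ discrete Wronskian matrix: for any $k$-subset $S$ and any two distinct elements $i,j\in S$, writing $S''=S\setminus\{i,j\}$,
\[
W_k[S](x)\cdot W_{k-2}[S''](xq^{-2})\,=\,\pm\bigl(W_{k-1}[S\!\setminus\!\{i\}](x)\,W_{k-1}[S\!\setminus\!\{j\}](xq^{-2})\,-\,W_{k-1}[S\!\setminus\!\{j\}](x)\,W_{k-1}[S\!\setminus\!\{i\}](xq^{-2})\bigr).
\]
Each product on the right-hand side is divisible by $G_{k-1}(x)\,G_{k-1}(xq^{-2})$, hence so is the left-hand side for every admissible pair $(S,S'')$ with $S''\subset S$. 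Combining these per-pair divisibilities---using that every $k$-subset $S$ contains some $(k-2)$-subset $S''$, together with standard product-of-gcds manipulations and the technical identities collected in the Appendix---yields the required unrestricted divisibility.

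Once the polynomiality of each $T_{N-k+1}$ is established, I set $\Te=(T_1,\ldots,T_N)$ and verify by induction on $k$ (using the telescoping recursion) that $\QT_k(x)=G_k(x)$ for every $k=1,\ldots,N$. The preframe conditions---divisibility of each $W_k[u_{i_1},\ldots,u_{i_k}](x)$ by $\QT_k(x)$, and the top-Wronskian identity \eqref{WN} corresponding to $k=N$---are then immediate. The hardest step is the combinatorial passage in the middle paragraph: the Jacobi identity supplies divisibility only for pairs $(S,S'')$ with $S''\subset S$, while the target $G_k(x)\,G_{k-2}(xq^{-2})$ arises from \emph{independent} minimizations over $S$ and $S''$, and bridging this mismatch is where the careful gcd bookkeeping (and the Appendix identities) will be indispensable.
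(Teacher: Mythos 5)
Your proposal correctly sets up the problem: you read off the putative preframe $\Te$ from the successive gcds $G_k$, verify the telescoping $\QT_k(x)=\QT_{k-1}(xq^{-2})\prod_{m=N-k+1}^N T_m(x)$, and reduce to showing that
\[
T_{N-k+1}(x)=\frac{G_k(x)\,G_{k-2}(xq^{-2})}{G_{k-1}(x)\,G_{k-1}(xq^{-2})}
\]
is a polynomial. The Jacobi/Dodgson identity you write down is also correct (it is essentially Lemma~\ref{Wid} with $k=2$). The difficulty is the one you yourself flag in the last paragraph, and it is a genuine gap, not a bookkeeping nuisance.

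From Jacobi you obtain, for every \emph{nested} pair $S''\subset S$ with $|S|=k$, $|S''|=k-2$, the inequality
\[
v_a\bigl(W_k[S]\bigr)+v_{aq^{-2}}\bigl(W_{k-2}[S'']\bigr)\ \ge\ v_a(G_{k-1})+v_{aq^{-2}}(G_{k-1})
\]
for each root $a$. What you need is the same inequality with the left side replaced by $\min_S v_a(W_k[S])+\min_{S''}v_{aq^{-2}}(W_{k-2}[S''])$, i.e.\ with the two minima taken \emph{independently}. The product-of-gcds identity $\gcd_{i,j}(a_i b_j)=\gcd_i(a_i)\cdot\gcd_j(b_j)$ is valid only when $i,j$ range over a full Cartesian product; here the index set of pairs is an incidence structure, and a set $S^*$ minimizing $v_a(W_k)$ need not contain any $S''$ minimizing $v_{aq^{-2}}(W_{k-2})$. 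No purely formal gcd manipulation closes this gap: the inequality fails for arbitrary numbers satisfying only the nested-pair constraints. What actually forces the inequality is the Wronskian structure itself---e.g.\ in the $k=3$ case one sees directly that if all rows $(u_i(a))_i,\ (u_i(aq^{-2}))_i,\ (u_i(aq^{-4}))_i$ are pairwise proportional, then not only do all $W_3[S]$ vanish at $a$ but each term in the expansion of $\frac{d}{dx}W_3[S]$ has two proportional columns and so vanishes too, giving the required extra order. This kind of rank/column argument, or an induction exploiting identities such as Lemma~\ref{Wid} and Lemma~\ref{Wid3} at the level of Wronskian determinants rather than at the level of valuations, is what the proof of the analogous Lemma~4.9 in \cite{MV2} (to which the paper defers) relies on; ``standard gcd bookkeeping'' plus Jacobi is not enough. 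As written, the middle paragraph of your argument does not prove the divisibility it needs, so the proposal is incomplete at its central step.
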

\begin{proof}
The proof is similar to that of Lemma~4.9 in \cite{MV2}.
\end{proof}

The preframe \>$\Te$ defined in Lemma \ref{lemframe} is clearly the strongest
preframe of \>$\Ue$. It is called the {\it frame} of \>$\Ue$.
\vsk.2>
For a semiregular collection of quasi\+-polynomials \>$\Ue=(u_1\lc u_N)$ and
a preframe \>$\Te=(T_1\lc T_N)$ of \>$\Ue$, define quasi\+-polynomials
\,$y_0\lc y_{N-1}$ by the rule \,$u_N\<=y_{N-1}\,\QT_1$,
\vvn.3>
\beq
\label{defy}
W_{N\<-\+i\>}[\>u_{i+1}\lc u_N\+](x)\,=\,y_i(x)\,\QT_{N-i}(x)\,,
\qquad i=0\lc N-2\,,\kern-2em
\vv.3>
\eeq
where \>$\QT_1\lc\QT_N\?$ are given by \Ref{QT}. Notice that $y_0(x)$ is
proportional to $x^{\+\la_1\lsym+\>\la_N}\<$. Call the semiregular collection
\>$\Ue$ {\it regular\/} if the quasi\+-polynomials \>$y_1\lc y_{N-1}$ are
\+log\+-free.

\vsk.2>
Assume that the collection \>$\Ue$ is regular. For $i=1\lc N-1$, let \,$l_i$
be the degree of the polynomial $x^{-\la_{i+1}\lsym-\>\la_N}\>y_i(x)$, and
\,$t^{(i)}_1\!\lc t^{(i)}_{l_i}$ be its roots. That is,
\vvn.2>
\beq
\label{deft}
y_i(x)\,=\,c_i\,x^{\+\la_{i+1}\lsym+\>\la_N}\,
\prod_{j=1}^{l_i}\,(x-t_j^{(i)})\,,\qquad i=1\lc N-1\,,\kern-2em
\eeq
for some nonzero complex numbers \>$c_1\lc c_{N-1}$.
Set \,$\lb=\>\lb_{\>\Ue,\Te}=(\>l_1\lc l_{N-1})$,
and denote by \>$X_{\Ue,\Te}$ the \$\Sl$\+-orbit of the point
\vvn.1>
$(\+t^{(i)}_j)_{\>i=1\lc\+N-1,\;j=1\lc\>l_i}$ in $\Cl\<$.

\begin{thm}
\label{thmV}
Let \,$\Ue\<$ be a regular collection of quasi\+-polynomials of
type $\lab$, \>$\Te=(T_1\lc T_N)$ be a preframe of \,$\Ue$, and
\,$\lb=\>\lb_{\>\Ue,\Te}=(\>l_1\lc l_{N-1})$. Then \,$X_{\Ue,\Te}$ is
an elementary \$\Sl$-orbit of regular solutions of equations \Ref{Be}
associated with the data \,$l_1\lc l_{N-1}$, $T_1\lc T_{N-1}$, $\la_1\lc\la_N$.
\end{thm}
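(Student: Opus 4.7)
The plan is to apply Proposition \ref{reg} and show that \>$p_i(x\+;\tb\+)$ divides \>$P_i(x\+;\tb\+)$ for each \>$i=1\lc N-1$; this single divisibility absorbs both the plain Bethe equations \Ref{Be} and the higher\+-order regularity conditions \Ref{Bem} for multiple roots.

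The first step is to substitute the explicit form \Ref{deft} of \>$y_i$ into the definition \Ref{P} of \>$P_i$. Setting \>$M_j=\la_{j+1}+\cdots+\la_N$, the identities \>$M_{i-1}-M_i=\la_i$ and \>$M_i-M_{i+1}=\la_{i+1}$ cause the explicit powers \>$q^{\>2\la_i}\!,\,q^{\>2\la_{i+1}}\!$ and the monomial prefactors produced by the shifts \>$x\map x\+q^{\pm2}\<$ to cancel cleanly. After clearing a nonvanishing overall factor, the divisibility of \>$P_i(x\+;\tb\+)$ by \>$p_i(x\+;\tb\+)$ becomes equivalent to the statement that the bilinear expression
\be
y_{i-1}(x)\>y_i(x\+q^{\>2})\>y_{i+1}(x\+q^{-2})\>T_i(x) \,+\, y_{i-1}(x\+q^{\>2})\>y_i(x\+q^{-2})\>y_{i+1}(x)\>T_i(x\+q^{\>2})
\ee
is divisible by \>$y_i(x)$.

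The second step is to convert this claim back to Wronskians via \Ref{defy}. Setting \>$V_k=W_{N-k}[u_{k+1}\lc u_N]$, so that \>$y_k=V_k/\QT_{N-k}$, a direct expansion of \>$\QT_{N-i+1}$, \>$\QT_{N-i}$ and \>$\QT_{N-i-1}$ via \Ref{QT} shows that the ratio of the total \>$\QT$-factors on the two terms above equals exactly \>$T_i(x\+q^{\>2})/T_i(x)$, cancelling the explicit \>$T_i$-factors in the bilinear expression. The divisibility claim thus reduces to the assertion that
\be
V_{i-1}(x)\>V_i(x\+q^{\>2})\>V_{i+1}(x\+q^{-2}) \,+\, V_{i-1}(x\+q^{\>2})\>V_i(x\+q^{-2})\>V_{i+1}(x)
\ee
is divisible by \>$V_i(x)$. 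This is a discrete analog of a bilinear Pl\"ucker identity, which I would derive from a Desnanot\+-Jacobi (Dodgson condensation) identity applied to a suitable augmentation of the \>$(N-i+1)\x(N-i+1)$ Wronskian matrix of \>$u_i\lc u_N$.

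The main obstacle will be to verify the two technical identities: the telescoping cancellation of \>$\QT$-factors (presumably the content of the Appendix promised in the introduction), and the discrete Pl\"ucker\+/Wronskian identity producing the divisibility by \>$V_i(x)$. Care is also needed for the boundary cases \>$i=1$ (where \>$y_0$ is a pure monomial, so \>$V_{i-1}$ is essentially \>$\QT_N$) and \>$i=N-1$ (where the convention \>$l_N\?=0$ makes the \>$y_{i+1}$ factor absent). Finally, that \>$X_{\Ue,\Te}$ is an \emph{elementary} \$\Sl$\+-orbit is immediate from \Ref{deft}: the numbers \>$t^{(i)}_j\!$ are determined by \>$\Ue,\Te$ only as an unordered multiset, which is precisely the \$\Sl$\+-orbit.
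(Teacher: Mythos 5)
Your plan is the paper's proof in outline: reduce via Proposition~\ref{reg} to showing $p_i\mid P_i$, rewrite the claim in terms of the nested Wronskians $V_k=W_{N-k}[\>u_{k+1}\lc u_N]$ (your $\QT$-factor cancellation is correct), and invoke a Pl\"ucker-type Wronskian identity. The one imprecise point is the source of that identity. Raw Dodgson condensation on the $(N-i+1)\times(N-i+1)$ Wronskian matrix of $u_i\lc u_N$ does \emph{not} produce the nested $V$'s you need: its interior minor is $W_{N-i-1}[\>u_{i+1}\lc u_{N-1}]$, which drops $u_N$ rather than $u_{i+1}$. The identity actually required is the composition lemma for discrete Wronskians, Lemma~\ref{Wid} of the Appendix, applied with $g_1=u_i$, $g_2=u_{i+1}$ and $f_*=u_{i+2}\lc u_N$. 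Setting $\Vt_i=W_{N-i}[\>u_i,u_{i+2}\lc u_N]$, it gives the relation $\Vt_i(x)\,V_i(x\+q^{-2})-\Vt_i(x\+q^{-2})\,V_i(x)=V_{i-1}(x)\,V_{i+1}(x\+q^{-2})$; evaluating at $x$ and at $x\+q^{\>2}$, multiplying by $V_i(x\+q^{\>2})$ and $V_i(x\+q^{-2})$ respectively and adding produces your bilinear expression with the explicit factor $V_i(x)$ in front. This is precisely the paper's route: $\Vt_i$ is $\yt_i\,\QT_{N-i}$ in~\Ref{defyt}, Lemma~\ref{Wid} (together with Lemma~\ref{common} to clear the $\QT$'s) gives~\Ref{Wyy}, and the two-point combination is~\Ref{yyy}, which is $P_i=p_i\cdot(\text{quasi-polynomial})$ written out. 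So the approaches coincide; the paper just names the divisor explicitly rather than asserting divisibility abstractly, and the precise determinantal identity you are reaching for is Lemma~\ref{Wid}, not bare Dodgson on the matrix you indicated.
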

\begin{proof}
Define the quasi\+-polynomials \,$\yt_1\lc\yt_{N-1}$ by the rule
\vvn.3>
\beq
\label{defyt}
W_{N\<-\+i\>}[\>u_i,u_{i+2}\lc u_N\+](x)\,=\,\yt_i(x)\,\QT_{N-i}(x)\,.\!\!
\vv.3>
\eeq
By Lemmas~\ref{common}, \ref{Wid}, and formula~\Ref{QT}, we have
\vvn.3>
\beq
\label{Wyy}
W_{2\>}[\>\yt_i,y_i](x)\,=\,y_{i-1}(x)\,y_{i+1}(xq^{-2})\,T_i(x)\,,
\eeq
that is,
\vv-.2>
\be
\yt_i(x)\,y_i(x\+q^{-2})-\yt_i(x\+q^{-2})\,y_i(x)\,=\,
y_{i-1}(x)\,y_{i+1}(x\+q^{-2})\,T_i(x)\,.
\vv-.2>
\ee
Therefore
\begin{align}
\label{yyy}
& y_i(x)\>\bigl(\yt_i(x\+q^{\>2})\,y_i(x\+q^{-2})-
\yt_i(x\+q^{-2})\,y_i(x\+q^{\>2})\bigr)\,={}
\\[4pt]
& \!\!{}=\,y_i(x\+q^{\>2})\,y_{i-1}(x)\,y_{i+1}(x\+q^{-2})\,T_i(x)+
y_i(x\+q^{-2})\,y_{i-1}(x\+q^{\>2})\,y_{i+1}(x)\,T_i(x\+q^{\>2})\,.
\notag
\\[-16pt]
\notag
\end{align}
If the point \>$\tb=(t^{(i)}_j)$ is defined by \Ref{defy}, \Ref{deft},
the polynomials \>$p_i(x\+;\tb\+)$, \>$P_i(x\+;\tb\+)$ are given by \Ref{p},
\Ref{P}, and the polynomial \>$\pt_i(x)$ equals
\>$x^{-\la_i-\la_{i+2}\lsym-\>\la_N}\yt(x)$, then relation \Ref{yyy}
is equivalent to
\vvn.1>
\be
P_i(x\+;\tb\+)\,=\,p_i(x)\>
\bigl(q^{\>2\+(\la_i-\la_{i+1})}\>\pt_i(x\+q^{\>2})\,p_i(x\+q^{-2})-
q^{\>2\+(\la_{i+1}-\la_i)}\>\pt_i(x\+q^{-2})\,p_i(x\+q^{\>2})\bigr)\,.
\vv.2>
\ee
By Proposition \ref{reg}, this proves Theorem \ref{thmV}.
\end{proof}

\begin{rem}
By Lemma~\ref{common}, a collection of quasi\+-polynomials \>$\Ue=(u_1\lc u_N)$
has a preframe $\Te=(T_1\lc T_N)$ if and only if \>$\Ute=(u_1/T_N\lc u_N/T_N)$
is a collection of quasi\+-polynomials and has a preframe
$\Tte=(T_1\lc T_{N-1},\>1)$. Clearly, \>$X_{\Ue,\Te}=X_{\Ute,\Tte}$, and
for both \>$\Ue$ and \>$\Ute$ equations \Ref{Be} are the same since they
involve only the polynomials $T_1\lc T_{N-1}$. Thus discussing relations
between collections of quasi\+-polynomials and solutions of Bethe ansatz
equations \Ref{Be} we can restrict ourselves without loss of generality
to preframes of the form $(T_1\lc T_{N-1},\>1)$.
\end{rem}

Say that a point \>$\tb=(t^{(i)}_j)$ is {\it generic\/} with respect to
the polynomials \>$\Tb=(T_1\lc T_{N-1})$ if \>$t_j^{(i)}\<\ne\>t_k^{(i+1)}\?$
for all \,$i=1\lc N-2$, \,$j=1\lc l_i$, \,$k=1\lc l_{i+1}$, and
\>$T_i(t^{(i)}_j)\ne0$ for all \,$i=1\lc N-1$, \,$j=1\lc l_i$. Clearly,
all points of the \$\Sl$\+-orbit of a generic point \>$\tb$ are generic.

\begin{lem}
\label{genframe}
Let \,$\Ue$ be a regular collection of quasi\+-polynomials
of type $\lab$, \>$\Te=(T_1\lc T_N)$ be a preframe of \,$\Ue$, and
\,$\lb=\>\lb_{\>\Ue,\Te}=(\>l_1\lc l_{N-1})$. Assume that \,$X_{\Ue,\Te}\?$
is the \$\Sl$-orbit of a generic point with respect to the polynomials
$T_1\lc T_{N-1}$. Then \,$\Te\<$ is the frame of \,$\Ue$.
\end{lem}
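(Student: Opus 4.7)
The plan is to show that $\Te$ coincides with the frame $\Hat\Te=(\Hat T_1\lc\Hat T_N)$ of $\Ue$ furnished by Lemma~\ref{lemframe}. Set $r_i(x)=\Qu{\Hat\Te}_{N-i}(x)/\QT_{N-i}(x)$ for $i=0\lc N-1$ and $r_N=1$; each $r_i$ is a genuine polynomial because $\Hat\Te$ is stronger than $\Te$. Comparing formula \Ref{WN} applied to both preframes gives $\QT_N=\Qu{\Hat\Te}_N$ (both are monic and are determined by $W_N[\+\Ue\>]$), so $r_0=1$. Once the equalities $r_i=1$ are established for every $i$, the relations $\QT_k=\Qu{\Hat\Te}_k$ yield $T_m=\Hat T_m$ by an induction peeling off one factor at a time along \Ref{QT}, which finishes the proof.

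The main computational step is to turn \Ref{Wyy} into a recursion for the $r_i$'s. From $y_i\+\QT_{N-i}=\Hat y_i\+\Qu{\Hat\Te}_{N-i}$ and the analogous consequence of \Ref{defyt} for $W_{N-i}[\+u_i,u_{i+2}\lc u_N\+]$, I obtain $y_i=\Hat y_i\+r_i$ and $\yt_i=\Hat{\yt}_i\+r_i$ with the \emph{same} polynomial $r_i$. Using the scaling rule $W_2[\+fh,gh\+](x)=h(x)\+h(xq^{-2})\+W_2[\+f,g\+](x)$, applying \Ref{Wyy} to both $\Te$ and $\Hat\Te$, and cancelling the common factor $\Hat y_{i-1}(x)\+\Hat y_{i+1}(xq^{-2})$, I expect the key identity
\be
r_i(x)\+r_i(xq^{-2})\+\Hat T_i(x)\,=\,r_{i-1}(x)\+r_{i+1}(xq^{-2})\+T_i(x)\>,\qquad i=1\lc N-1.
\ee

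I then argue by contradiction: suppose $r_i\ne1$ for some $i$ and pick a root $c$. Since $r_i$ divides the polynomial part of $y_i$, the root $c$ equals some $t^{(i)}_j$, so by genericity $T_i(c)\ne0$. Evaluating the identity at $x=c$ annihilates the left side, leaving $r_{i-1}(c)=0$ or $r_{i+1}(cq^{-2})=0$. The first alternative would make $c$ a $t^{(i-1)}_k$ (which is vacuous when $i=1$ since $r_0=1$), contradicting the genericity clause $t^{(i-1)}_k\ne t^{(i)}_j$. So the second alternative must hold, and iterating at indices $i+1,i+2,\ldots$ with roots $cq^{-2},cq^{-4},\ldots$ forces $cq^{-2j}$ to be a root of $r_{i+j}$ for each $j=1\lc N-i$; the last step collides with $r_N=1$, a contradiction.

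The main obstacle I anticipate is precisely this second alternative: the genericity hypothesis only forbids $t^{(i)}_j=t^{(i+1)}_k$ and says nothing about $q^2$-shifts, so a root of $r_i$ may very well coincide with a $q^2$-shifted root of $r_{i+1}$. This rules out a one-step conclusion and is what forces the chain argument; the chain closes only because the $q^{-2}$-shifts march up the index monotonically and must terminate at the trivial boundary $r_N=1$ within $N-i$ steps.
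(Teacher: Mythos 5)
Your proof is correct, and it genuinely differs from the paper's. The paper's argument picks the largest index $i$ at which a strictly stronger preframe $\Tte$ differs from $\Te$, observes $i>1$, then uses the preframe divisibility for the two Wronskians $W_{N-i+1}[u_i\lc u_N]$ and $W_{N-i+2}[u_{i-1}\lc u_N]$ to conclude that $p_{i-1}$ and $p_{i-2}T_{i-1}$ both vanish at a common point $a$; this violates the single genericity statement ``$p_{j-1}T_j$ and $p_j$ are coprime'' in one step. You instead package the comparison into the monic ratios $r_i=\Qu{\Hat\Te}_{N-i}/\QT_{N-i}$ and funnel everything through the already-established identity \Ref{Wyy}; the resulting two-step relation
\[
r_i(x)\,r_i(x\+q^{-2})\,\Hat T_i(x)\;=\;r_{i-1}(x)\,r_{i+1}(x\+q^{-2})\,T_i(x)
\]
is a clean reformulation not written down in the paper, and the contradiction is reached by chaining a root of $r_i$ upward through $r_{i+1},r_{i+2},\lc$ at the $q^{-2}$-shifted points until hitting $r_N=1$. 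Your version uses genericity at every level rather than once; the paper's is shorter but requires unwinding the preframe divisibility from scratch, while yours recycles \Ref{Wyy} and makes the propagation mechanism explicit. One small remark: at the last rung ($m=N-1$) the contradiction can equivalently be read off from $r_{N-2}$ rather than $r_N$, since $r_N=1$ already forces $r_{N-2}(c\+q^{-2(N-1-i)})=0$, which the chain has just shown to be impossible; both readings are fine. Also, for full rigor you should note (as you did implicitly) that each $r_i$ is monic, so $r_i\ne1$ indeed guarantees a root, and that the $\Hat y_i$ defined from the frame are still log-free because $W_{N-i}[u_{i+1}\lc u_N]=y_i\QT_{N-i}$ is log-free.
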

\begin{proof}
Let \>$p_i(x)=x^{-\la_{i+1}\lsym-\>\la_N}\>y_i(x)$.
Suppose \>$\Tte=(\Tt_1\lc\Tt_N)$ is a preframe of \,$\Ue$ strictly stronger
than \>$\Te$. Let \>$i$ be the largest number such that \>$\Tt_i\ne T_i$.
Notice that \>$i>1$. There is a number \>$a$ such that $\Tt_i(a)=0$ \>and
$T_i(a)\ne 0$. This implies that \>$p_{i-1}(a)=0$ \>and
\>$p_{i-2}(a)\,T_{i-1}(a)=0$, see \Ref{QT}, \Ref{defy}. However, if
$X_{\Ue,\Te}\<$ is the \$\Sl$-orbit of a generic point, then for every
$j=1,\lc N-1$, the polynomials \>$p_{j-1}\,T_j$ and \>$p_j$ are coprime.
The claim follows.
\end{proof}

\subsection{Main result}
\label{s-thm}
A point \>$\tb=(t^{(i)}_j)_{\>i=1\lc\+N-1,\;j=1\lc\>l_i}$ is called
{\it admissible\/} if
\beq
\label{cond1}
t^{(i)}_j\<\ne\>t^{(i)}_kq^2\>,\qquad i=1\lc N-1\,,\quad j\+,k=1\lc l_i\,.
\vv-.1>
\eeq
In particular, \>$t^{(i)}_j\<\ne 0$ for all \,$i=1\lc N-1$, \,$j=1\lc l_i$.
Clearly, all points
of the \$\Sl$\+-orbit of an admissible point \>$\tb$ are admissible.

\vsk.2>
Fix collections of complex numbers $\lab=(\la_1\lc\la_N)$, nonnegative integers
\,$\lb=(l_1\lc l_{N-1})$ and monic polynomials \>$\Tb=(T_1\lc T_{N-1})$.
The following theorem is the main result of the paper.

\begin{thm}
\label{main}
Let \,$\tb$ be an admissible regular solution of equations \>\Ref{Be}
associated with the data \,$\lb\+,\+\Tb\<,\+\lab$. Then there is a regular
collection of quasi\+-polynomials \,$\Ue$ of type $\lab$ such that
\,$\Te=(T_1\lc T_{N-1},\>1)$ is a preframe of \;$\Ue$ and the \$\Sl$-orbit
of \;$\tb$ equals \>$X_{\Ue,\Te}$.
\end{thm}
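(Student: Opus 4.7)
The plan is to construct $\Ue=(u_1,\ldots,u_N)$ by a downward recursion, producing $u_N$ explicitly and then $u_{N-1},\ldots,u_1$ one at a time by solving first-order $q$-difference equations whose scalar solvability is exactly the Bethe ansatz equation at the corresponding level. The strategy is to reverse the derivation in the proof of Theorem~\ref{thmV}: start from the $y_j$'s that are read off from $\tb$ via \Ref{defy}--\Ref{deft}, use the Wronskian identity \Ref{Wyy} as a defining equation for an auxiliary quasi-polynomial, and then recover each $u_i$ from that auxiliary by a Jacobi--Desnanot-type relation.

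For the base, set $u_N(x)=x^{\la_N}p_{N-1}(x;\tb)$; since $\QT_1=T_N=1$, the relation $W_1[u_N]=y_{N-1}\QT_1$ holds with $y_{N-1}=u_N$, matching \Ref{defy}--\Ref{deft}. For the recursive step, assume $u_{i+1},\ldots,u_N$ have been built so that $W_{N-j}[u_{j+1},\ldots,u_N]=c_j\,y_j\,\QT_{N-j}$ for all $j\ge i$. Imitating the auxiliary $\yt_i$ of Theorem~\ref{thmV} in reverse, seek a quasi-polynomial $v_i$ of type $\la_i+\la_{i+2}+\cdots+\la_N$ satisfying
\[
v_i(x)\,y_i(x\+q^{-2}) \,-\, v_i(x\+q^{-2})\,y_i(x) \,=\, y_{i-1}(x)\,y_{i+1}(x\+q^{-2})\,T_i(x),
\]
namely \Ref{Wyy} read as an equation for the first factor. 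Evaluation at the roots of $y_i(x)$ and at their $q^2$-shifts --- distinct by admissibility --- gives two formulas for $v_i(t^{(i)}_j)$, and the Bethe ansatz equation $P_i(t^{(i)}_j;\tb)=0$ is exactly the statement that these formulas coincide. A Lagrange interpolation / partial-fraction argument, with degree controlled by the degrees of the $y_j$ and $T_i$, then produces $v_i$. Finally, recover $u_i$ by enforcing $W_{N-i}[u_i,u_{i+2},\ldots,u_N]=v_i\,\QT_{N-i}$; combined with the already-known $W_{N-i}[u_{i+1},\ldots,u_N]$, the Jacobi--Desnanot-type identities of the Appendix (already used in Lemma~\ref{common} and Lemma~\ref{Wid}) reduce this to a further first-order $q$-difference equation whose solvability is automatic at this sub-step.

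The principal obstacle is the handling of coinciding coordinates in $\tb$. At a value $t^{(i)}_I$ shared by $|\+I\+|$ coordinates, a single scalar compatibility is insufficient; $|\+I\+|$ successive Taylor-level matching conditions are needed, and these are exactly the derivative vanishings \Ref{Bem} encoded by regularity of $\tb$, so the full strength of the regularity hypothesis is used to guarantee that the interpolation yields a genuine quasi-polynomial rather than a meromorphic function. A secondary technical point is to verify that admissibility together with regularity rules out accidental cancellations among the coefficients of the $q$-difference equation that would obstruct the global partial-fraction construction, in the spirit of the coprimality argument in Lemma~\ref{genframe}. Once all the $u_i$ are in place, the chain of Wronskian identities and log-freeness of the $y_j$ directly yield that $\Ue$ is regular, that $\Te=(T_1,\ldots,T_{N-1},1)$ is a preframe of $\Ue$, and that $X_{\Ue,\Te}$ coincides with the $\Sl$-orbit of $\tb$.
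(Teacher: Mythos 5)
Your high-level strategy --- a downward recursion producing $u_N,\ldots,u_1$, reversing the derivation of Theorem~\ref{thmV}, with the Bethe ansatz equation at level $i$ appearing as the compatibility condition for a first-order difference equation --- is the same skeleton the paper uses. However, two essential pieces are missing or wrong.

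First, the recovery of $u_i$ from $v_i$ does not work as you describe. The quasi-polynomial you call $v_i$ is the paper's $w_{i,\+i+1}$ (see \Ref{defyt} and \Ref{W2}), and knowing $w_{i,\+i+1}$ alone does not determine $u_i$. The equation $W_{N\<-\+i\>}[\>u_i,u_{i+2}\lc u_N\+]=v_i\+\QT_{N-i}$ is an $(N\<-\+i\<-\+1)$\+-st order linear inhomogeneous $q$-difference equation in $u_i$, not a first-order one, and there is no Jacobi--Desnanot trick that collapses it to order one; its general solution involves iterated antidifferences and is not obviously a quasi-polynomial. What the paper actually does is construct the \emph{entire family} $w_{ij}$ for $j=i\lc N$ (each by applying the operator $\Fe$ to $y_i$ and $y_{i-1}\+\wc_{i+1,\+j}\+T_i$, so each is a single first-order solve), prove in Lemma~\ref{div3} that $y_i$ divides $\sum_j(-1)^{\+j}w_{ij}\+u_j$, and then \emph{define} $u_i$ by the alternating sum \Ref{ui}, $u_i=\sum_{j>i}(-1)^{\+j-i-1}(w_{ij}/y_i)\+u_j$, verifying the Wronskian relation afterwards in Proposition~\ref{A}. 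This use of a linear combination with non\+-quasi\+-constant coefficients is the crux of the argument and is absent from your sketch.

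Second, "Lagrange interpolation / partial-fraction" is not an accurate account of how $v_i$ is produced. The paper splits the right-hand side using the B\'ezout identity $r(x)\+p(x)+s(x)\+p(x\+q^{-2})=1$ (valid precisely because of admissibility~\Ref{cond1}), extracts the polynomial part $\bra\cdot\ket_+$, and applies the antidifference operator $\Ie$ to solve the resulting first-order equation. This operator can introduce a $\log x$ term (via $\Ic\+[\+P,1\+]$), which is exactly the mechanism producing genuine quasi\+-polynomials and is why regularity of the collection has to be argued rather than assumed. Finally, your claim that "the chain of Wronskian identities directly yield" the preframe property underestimates the work: showing that $Q_k$ divides $W_k[u_{i_1}\lc u_{i_k}]$ for \emph{all} $k$-element subsets $\{i_1\lc i_k\}$, not just the consecutive tails, is the content of Section~\ref{s-Te} and requires the inductive Proposition~\ref{Wwt} on Wronskians of the $w_{ij}$'s.
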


Theorem \ref{main} will be proved in Section \ref{s-proof}. The proof is going
in three steps. First we prove the theorem for $N=2$. The obtained statement
is employed then at the second step to construct the required collection of
quasi\+-polynomials \>$\Ue$ for general $N\?$. The final step is to show that
\,$\Te=(T_1\lc T_{N-1},\>1)$ is a preframe of the constructed collection
\,$\Ue$.

\begin{cor}
\label{main2}
Let \,$\tb$ be a generic admissible regular solution of equations \>\Ref{Be}
associated with the data \,$\lb\+,\+\Tb\<,\+\lab$. Then there is a regular
collection of quasi\+-polynomials \,$\Ue$ of type $\lab$ such that
\,$\Te=(T_1\lc T_{N-1},\>1)$ is the frame of \;\<$\Ue$ and the \$\Sl$-orbit
of \;$\tb$ equals \>$X_{\Ue,\Te}$.
\end{cor}
\begin{proof}
The statement follows from Theorem \ref{main} and Lemma \ref{genframe}.
\end{proof}

\section{Proof of Theorem \ref{main}}
\label{s-proof}
\subsection{Proof of Theorem \ref{main} for $N=2$}
\label{s-N2}
Let \>$y(x)=x^{\>\al}p(x)$ be a \+log\+-free quasi\+-polynomial of type
\>$\al$. We call \>$y(x)$ admissible if the polynomials \>$p(x)$ and
\>$p(x\+q^{-2})$ are coprime. In particular, this implies that \>$p(0)\ne 0$.
Thus for an admissible quasi\+-polynomial $y(x)$, the number \>$\al$ and
the polynomial $p(x)$ are determined uniquely.

\vsk.2>
Let \>$y(x)=x^{\>\al}p(x)$ be an admissible quasi\+-polynomial of type \>$\al$.
Since \>$p(x)$ and \>$p(x\+q^{-2})$ are coprime, it is known that there are
unique polynomials \>$r(x)$ and \>$s(x)$ of degree at most \>$\deg\+p$ \>such
that \>$r(x)\>p(x)+s(x)\>p(x\+q^{-2})=1$. Define the quasi\+-polynomials
\>$\Ae[\+y\+]$ and \>$\Be[\+y\+]$ of type \>$-\+\al$ by the rule
\vvn-.1>
\beq
\label{ABy}
\Ae[\+y\+](x)\,=\,x^{-\al}r(x)\,,\qquad \Be[\+y\+](x)\,=\,x^{-\al}s(x)\,,
\vv-.2>
\eeq
so that
\vvn-.2>
\beq
\label{AB1}
y(x)\,\Ae[\+y\+](x)+y(x\+q^{-2})\,\Be[\+y\+](x)\,=\,1\,.
\eeq

\vsk.4>
For a polynomial \>$P(s)$ and a number \>$c$,
let \>$\Ic\+[\+P,c\>]$ be the unique polynomial such that
\vvn.3>
\be
\Ic\+[\+P,c\>](s)-c\>\>\Ic\+[\+P,c\>](s-2\+\log\+q)\,=\,P(s)\,,
\vv.2>
\ee
$\deg\>\Ic\+[\+P,c\>]=\deg\+P$ \>if \>$c\ne1$, \>and
\,$\deg\>\Ic\+[\+P,1\+]=1+\deg\+P$, \;$\Ic\+[\+P,1\+](0)=0$.
\vvn.1>
For example,
\be
\Ic\+[\>1\>,\<1\>](s)\,=\,\frac s{2\+\log\+q}\;,\qquad
\Ic\+[\+1\>,c\>](s)\,=\,\frac1{1-c}\;,\quad c\ne 1\,.
\vv.3>
\ee
For a quasi\+-polynomial $f(x)=x^{\>\al}\>\sum_i\>x^i\+P_i\+(\+\log x)$ of type
\vvn.1>
\>$\al$, define the quasi\+-polynomial $\Ie\+[\+f\+]$ of type \>$\al$ by the rule
\,$\Ie\+[\+f\+](x)=
x^{\>\al}\>\sum_i\>x^i\,\Ic\+[\+P_i\+,q^{-2\al}\+](\+\log x)$, so that
\be
\Ie\+[\+f\+](x)-\Ie\+[\+f\+](x\+q^{-2})\,=\,f(x)\,.
\vv-.5>
\ee
For example,
\vvn-.3>
\be
\Ie[\>1\>]\,=\,\frac{\log x}{\log\+q^{\>2}}\,,\qquad
\Ie\+[\+x^{\>\al}\+]\,=\,\frac{x^{\>\al}}{1-q^{-2\al}}\,,\quad
q^{-2\al}\<\ne\+1\,.
\ee

\vsk.3>
For a quasi\+-polynomial $f(x)=x^{\>\al}p(x,\log x)$ of type \>$\al$ and
a \+log\+-free quasi\+-polynomial $g(x)=x^{\>\bt}r(x)$ of type \>$\bt$, define
the quasi\+-polynomial $\bra\+f(x)/g(x)\ket_+\?=x^{\>\al-\bt}\+h(x,\log x)$
of type \>$\al-\bt$ by requiring that $h(x,s)$ is the polynomial part of
the ratio $p(x,s)/r(x)$, that is,
\,$\deg_{\>x}\bigl(p(x,s)-r(x)\>h(x,s)\bigr)<\deg\+r(x)$.
If $f(x)=g(x)\>[f(x)/g(x)]_+$, we say that $g(x)$ divides $f(x)$.

\vsk.2>
For an admissible quasi\+-polynomial \>$y(x)=x^{\>\al}p(x)$ of type \>$\al$
and a quasi\+-polynomial \>$V(x)$ of type \>$\bt$, define the quasi\+-polynomial
\>$\Fe[\>y,\<V\+]$ of type \>$\bt-\al$ as follows. Let \>$a=\Ae[\+y\+]$ and
\>$b=\Be\+[y\+]$, see \Ref{ABy}. Consider the quasi\+-polynomial
\vvn.3>
\be
v(x)\,=\,\biggl\bra\+
\frac{a(x)\>V(x)+b(x\+q^{-2})\>V(x\+q^{-2})}{y(x\+q^{-2})}\+\biggr\ket_{\!\!+}
\vv.4>
\ee
of type \>$\bt-2\+\al$. Set \,$\Je[\>y,\<V\+]=\Ie[\+v\+]$ \>and
\vvn.4>
\beq
\label{Fx}
\Fe[\>y,\<V\+](x)\,=\,V(x)\,\Be[\+y\+](x)+y(x)\,\Je[\>y,\<V\+](x)\,.
\vvn.4>
\eeq

\begin{prop}
\label{Vy}
Let \,$y(x)$ be an admissible quasi\+-polynomial of type \>$\al$, \>$V(x)\<$
be a quasi\+-polynomial of type \>$\bt$. Let \>$Y\?=\Fe\+[\>y,\<V\+]$.
Assume that \,$y(x)\<$ divides the quasi\+-polynomial
\,$y(x\+q^{\>2})\>V(x)+y(x\+q^{-2})\>V(x\+q^{\>2})$ of type \>$\al+\bt$.
Then \,$W_2[\>Y\?,y\>]=V\?$.
\end{prop}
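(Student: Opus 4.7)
The plan is to compute \,$W_2[\+Y\?,y\>]$\, directly from the definition \,$Y=V\,\Be[\+y\+]+y\,\Je[\+y,V\+]$\, and show it collapses to \,$V$. For brevity write \,$a=\Ae[\+y\+]$, $b=\Be[\+y\+]$, $J=\Je[\+y,V\+]$\, and use the shorthand \,$f^-\?:=f(x\+q^{-2})$. Expanding \,$Y(x)\+y^-\?-\+Y^-y(x)$\, gives
\be
W_2[\+Y\?,y\>]\,=\,V\+b\+y^-\<-\+V^-b^-y+y\+y^-(J-J^-)\,.
\ee
Since \,$J=\Ie[\+v\+]$\, satisfies \,$J-J^-=v$, and \,$v=\bra(aV+b^-V^-)/y^-\ket_+$\, is by construction the polynomial part of the displayed quotient, this expression will reduce via the Bezout identity \,$y\+a+y^-b=1$\, to \,$W_2[\+Y\?,y\>]=V$\, as soon as one knows the division is exact, i.e.\ as soon as \,$y^-$\, actually divides \,$aV+b^-V^-$. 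Granting that, substitution of \,$y^-v=aV+b^-V^-$\, yields
\,$V\+b\+y^-\<-V^-b^-y+y\+a\+V+y\+b^-V^-=V\+(y\+a+y^-b)=V$, as claimed.

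The crux is therefore the divisibility \,$y^-\mid aV+b^-V^-$. The hypothesis reads \,$y\mid y(x\+q^{\>2})V(x)+y^-V(x\+q^{\>2})$, which after the shift \,$x\mapsto x\+q^{-2}$\, becomes \,$y^-\mid y\+V^-\<+y^{--}V$. To exploit this, multiply \,$aV+b^-V^-$\, by \,$y$\, and use \,$y\+a=1-y^-b$:
\be
y\+(aV+b^-V^-)\,=\,V-y^-b\+V+y\+b^-V^-.
\ee
Modulo \,$y^-$\, the middle term drops out. The Bezout identity shifted by \,$x\mapsto x\+q^{-2}$\, reads \,$y^-a^-\<+y^{--}b^-\<=1$, giving \,$y^{--}b^-\equiv1\pmod{y^-}$; admissibility of \,$y$\, translates by the same shift into coprimality of the polynomial parts of \,$y^-$\, and \,$y^{--}$, so \,$y^{--}$\, is invertible modulo \,$y^-$. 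Multiplying the previous display by \,$y^{--}$\, and substituting \,$y^{--}b^-\equiv1$\, yields
\be
y^{--}\+y\+(aV+b^-V^-)\,\equiv\,V\+y^{--}\<+y\+V^-\pmod{y^-},
\ee
and the right-hand side is divisible by \,$y^-$\, by the shifted hypothesis. Invertibility of \,$y^{--}$\, mod \,$y^-$\, then gives \,$y^-\mid y\+(aV+b^-V^-)$, and coprimality of \,$y$\, and \,$y^-$\, (admissibility again) finally yields \,$y^-\mid aV+b^-V^-$.

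With the divisibility in hand, the routine substitution from the first paragraph completes the proof. The main obstacle is the divisibility step itself: the Bezout coefficients \,$a,b$\, tie together only \,$y$\, and \,$y^-$\, via a one-step relation, whereas the required divisibility is by \,$y^-$\, alone; it is precisely the shift of the Bezout identity together with the shift of the hypothesis that bridges this two-step gap.
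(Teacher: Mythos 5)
Your proof is correct. Both your argument and the paper's hinge on the same key divisibility, namely that $y(xq^{-2})$ divides $\Ae[y](x)V(x)+\Be[y](xq^{-2})V(xq^{-2})$, after which the computation $W_2[Y,y]=V$ is a routine substitution using the Bezout relation $\Ae[y]\,y+\Be[y]\,y^-=1$. The difference is in how that divisibility is established. The paper proves the (unshifted) version $y\mid \Be[y]\,V + \Ae[y](xq^2)\,V(xq^2)$ via a single explicit algebraic identity,
\begin{equation*}
a(xq^2)\,b(x)\bigl(y(xq^2)V(x)+y(xq^{-2})V(xq^2)\bigr)
= a(xq^2)V(xq^2)+b(x)V(x)-\bigl(aa^+V^++bb^+V\bigr)(x)\,y(x),
\end{equation*}
which immediately gives the divisibility since $y$ divides the left-hand side by hypothesis. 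You instead work modulo $y^-$: you multiply the target expression by the extra factor $y^{--}y$, simplify using the Bezout identities at two consecutive shifts and the shifted hypothesis to show the product is $\equiv 0 \pmod{y^-}$, then cancel the factors $y^{--}$ and $y$ using their coprimality with $y^-$, which follows from admissibility and its $q^{-2}$-shift. Both routes use exactly the same ingredients (Bezout at two consecutive shifts, admissibility, the hypothesis); the paper's packaging is more compact because the identity delivers the divisibility directly without an intermediate multiplication and cancellation, whereas your version requires the additional step of inverting $y^{--}$ and $y$ modulo $y^-$.
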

\begin{proof}
Let \>$a=\Ae[\+y\+]$, \,$b=\Be[\+y\+]$, \,$f=\Je[\>y,\<V]$. By \Ref{AB1},
\vvn.2>
\beq
\label{ab}
a(x)\>y(x)+b(x)\>y(x\+q^{-2})\,=\,1
\eeq
and
\vvn-.5>
\begin{align*}
& a(x\+q^{\>2})\>b(x)\>\bigl(y(x\+q^{\>2})\>V(x)+
y(x\+q^{-2})\>V(x\+q^{\>2})\bigr)\,={}
\\[3pt]
&\,{}=\,a(x\+q^{\>2})\>V(x\+q^{\>2})+b(x)\>V(x)-\bigl(
a(x)\>a(x\+q^{\>2})\>V(x\+q^{\>2})+b(x)\>b(x\+q^{\>2})\>V(x)\bigr)\>y(x)\,,
\\[-14pt]
\end{align*}
so \>$y(x)$ divides the quasi\+-polynomial
\>$a(x\+q^{\>2})\>V(x\+q^{\>2})+b(x)\>V(x)$ of type \>$\bt-\al$. Hence
\vvn.3>
\be
y(x\+q^{-2})\>\bigl(f(x)-f(x\+q^{-2})\bigr)\,=\,
a(x)\>V(x)+b(x\+q^{-2})\>V(x\+q^{-2})\,,
\vv.3>
\ee
and \,$W_2[\>Y\?,y\>]=V$ \>by an easy simplification using \Ref{ab}.
\end{proof}

\begin{proof}[Proof of Theorem \ref{main} for $N=2$]
Let \,$\tb=(\+t^{(i)}_j)$ be an admissible regular solution of
equations \>\Ref{Be} and $p_1(x\+;\tb\+)$ be the polynomial given by \Ref{p}.
\vvn.06>
Define an admissible quasi\+-poly\-nomial \>$y(x)=x^{\+\la_2}p_1(x\+;\tb\+)$ of
type $\al=\la_2$ and a quasi\+-polynomial \>$V(x)=x^{\+\la_1+\la_2}\>T_1(x)$
of type \>$\bt=\la_1+\la_2$.

\vsk.2>
Let \>$u_2\<=y$ and \>$u_1\<=\Fe[\>y,\<V\+]$.
Proposition \ref{reg} shows that \>$y(x)$ divides the quasi\+-poly\-nomial
\,$y(x\+q^{\>2})\>V(x)+y(x\+q^{-2})\>V(x\+q^{\>2})$ of type \>$\al+\bt$.
Hence by Proposition \ref{Vy},
\vvn.3>
\be
W_2[\>u_1,u_2]\,=\,x^{\+\la_1+\la_2}\>T_1(x)\,,
\vv.2>
\ee
that is, \>$\Ue=(u_1,u_2)$ is the required collection of quasi\+-polynomials
of type $\lab=(\la_1,\la_2)$.
\end{proof}

\subsection{Construction of a collection of quasi\+-polynomials}
\label{s-Ue}
Let \,$\tb=(\+t^{(i)}_j)$ be an admis\-sible regular solution of equations
\>\Ref{Be} associated with the data \,$\lb\+,\+\Tb\<,\+\lab$. \>Let
\vvn.2>
\beq
\label{px}
p_i(x)\,=\,\prod_{j=1}^{l_i}\,(x-t_j^{(i)})\,,\qquad i=1\lc N-1\,,\kern-2em
\vv.1>
\eeq
cf.~\Ref{p}, and \>$p_0(x)=p_N(x)=1$. Then for every \>$i=0\lc N\?$,
\vvn.3>
\be
y_i(x)\,=\,x^{\+\la_{i+1}\lsym+\>\la_N}\+p_i(x)
\vv.2>
\ee
is an admissible quasi\+-polynomial of type $\la_{i+1}\<\lsym+\la_N$.
Set \>$a_i=\Ae[\+y_i]$ \>and \>$b_i=\Be[\+y_i]$, see \Ref{ABy}, so that
\beq
\label{abi}
a_i(x)\>y_i(x)+b_i(x)\>y_i(x\+q^{-2})\>=\>1\,.
\vv.3>
\eeq

\vsk.2>
The next lemma is equivalent to Proposition \ref{reg}.

\begin{lem}
\label{div}
For any \,$i=1\lc N-1$, the quasi\+-polynomial \,$y_i(x)\<$ divides
the quasi\+-poly\-nomial
\beq
\label{Ai}
A_i(x)\,=\,y_i(x\+q^{\>2})\>y_{i-1}(x)\>y_{i+1}(x\+q^{-2})\>T_i(x)+
y_i(x\+q^{-2})\>y_{i-1}(x\+q^{\>2})\>y_{i+1}(x)\>T_i(x\+q^{\>2})
\vv.2>
\eeq
of type \>$\la_i+2\+\la_{i+1}+3\>(\la_{i+2}\<\lsym+\la_N)$.
\end{lem}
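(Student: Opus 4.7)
The plan is to recognize that the lemma is essentially Proposition \ref{reg} in quasi-polynomial dress. First I would expand each $y_j(xq^{\pm2})$ appearing in \Ref{Ai} using the defining formula $y_j(x) = x^{S_j} p_j(x)$, where $S_j := \la_{j+1} + \ldots + \la_N$, producing explicit $q$-prefactors of the form $q^{\pm 2 S_j}$ and $x$-powers $x^{S_j}$. A short bookkeeping shows that both summands share the common monomial $x^{\la_i + 2S_i + S_{i+1}}$, which equals $x^{\la_i + 2\la_{i+1} + 3S_{i+1}}$, matching the declared type; the $q$-prefactors collect to $q^{2\la_{i+1}}$ on the first summand and $q^{2\la_i}$ on the second.

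Comparing the result with the definition \Ref{P} of $P_i(x;\tb)$, I would read off the identity
\be
A_i(x) \,=\, x^{\la_i + 2S_i + S_{i+1}}\,P_i(x;\tb)\,.
\ee
Now $y_i(x) = x^{S_i} p_i(x;\tb)$ is a log-free quasi-polynomial, so divisibility of $A_i$ by $y_i$ in the quasi-polynomial sense (via the polynomial-part operator $[\cdot/\cdot]_+$) reduces to divisibility of $x^{\la_i + S_i + S_{i+1}} P_i(x;\tb)$ by $p_i(x;\tb)$ as ordinary polynomials. The admissibility of $\tb$ gives $t^{(i)}_j \ne 0$ via \Ref{cond1}, hence $p_i(0;\tb) \ne 0$, so $p_i(x;\tb)$ is coprime to any power of $x$, and the divisibility further reduces to $p_i(x;\tb) \mid P_i(x;\tb)$.

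This last statement is exactly the conclusion of Proposition \ref{reg} for the regular solution $\tb$, which completes the argument. The only substantive step in the plan is the initial exponent bookkeeping; I do not foresee a genuine obstacle, since the lemma is by construction a quasi-polynomial restatement of Proposition \ref{reg}. The only minor care required concerns the endpoints $i=1$ and $i=N-1$, where $p_{i-1}$ or $p_{i+1}$ degenerates to the constant $1$ under the convention $l_0 = l_N = 0$ --- but this is consistent with both sides of the claim.
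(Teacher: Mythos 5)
Your proposal is correct and matches the paper's approach exactly: the paper states that Lemma \ref{div} is equivalent to Proposition \ref{reg}, and your exponent bookkeeping supplies the verification, namely $A_i(x)=x^{\la_i+2S_i+S_{i+1}}\+P_i(x\+;\tb\+)$ with $S_j=\la_{j+1}\<\lsym+\la_N$. One small cleanup worth noting: the intermediate appeal to admissibility (to argue $p_i(0)\ne0$ so $p_i$ is coprime to powers of $x$) is superfluous and slightly misstated, since $x^{\la_i+S_i+S_{i+1}}$ is not an ordinary polynomial for complex exponents; the paper's definition of quasi-polynomial division, where $g(x)=x^{\bt}r(x)$ divides $f(x)=x^{\al}p(x)$ iff $r(x)\mid p(x)$ as polynomials, already yields $y_i\mid A_i\iff p_i\mid P_i$ directly, so the admissibility hypothesis plays no role in this particular lemma.
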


Set \>$Q_1(x)=1$ \>and
\vvn-.5>
\beq
\label{Qk}
Q_k(x)\,=\,
\prod_{i=2}^k\,\prod_{j=0}^{k-i}\;\<T_{N\<-\+i\++1}(x\+q^{-2j})\,,
\qquad k=2\lc N\>,\kern-2em
\eeq
cf.~\Ref{QT}.
We will construct a collection of quasi\+-polynomials \>$\Ue=(u_1\lc u_N)$
of type $\lab$ such that \>$u_N\<=y_{N-1}$ and
\vvn.2>
\beq
\label{Wuu}
W_{N\<-\+i\>}[\>u_{i+1}\lc u_N\+](x)\,=\,y_i(x)\,Q_{N-i}(x)\,,
\qquad i=0\lc N-2\,,\kern-3em
\vv.4>
\eeq
cf.~\Ref{defy}. We employ the recursive procedure described below. Given the
quasi\+-polynomials \>$u_{i+1}\lc u_N\?$, we obtain the quasi\+-polynomial \>$u_i$
by formula \Ref{ui} and verify relation \Ref{Wuu} in Proposition \ref{A}.

\vsk.2>
For the first step of the process, set
\vvn.06>
\,$u_{N-1}=\>\Fe[\>y_{N-1},y_{N-2}\>T_{N-1}\+]$.
Then Lemma \ref{div} for $i=N-1$ and Proposition \ref{Vy} yield
\vvn.3>
\beq
\label{W-1}
W_2[\>u_{N-1},u_N\+]\,=\,W_2[\>u_{N-1},y_{N-1}]\,=\,y_{N-2}\>T_{N-1}\>=
\,y_{N-2}\,Q_2\,,
\vv.3>
\eeq
which is relation \Ref{Wuu} for $i=N-2$.

\begin{lem}
\label{div2}
The quasi\+-polynomial \,$y_{N-2}(x)\<$ divides the quasi\+-polynomial
\vvn.3>
\be
B(x)\>=\,
y_{N-2}(x\+q^{\>2})\,y_{N-3}(x)\,u_{N-1}(x\+q^{-2})\,T_{N-2}(x)+
y_{N-2}(x\+q^{-2})\,y_{N-3}(x\+q^{\>2})\,u_{N-1}(x)\,T_{N-2}(x\+q^{\>2})
\vv.3>
\ee
of type \>$\la_{N-2}+3\+\la_{N-1}+2\+\la_N$.
\end{lem}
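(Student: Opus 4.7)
The plan is to combine Lemma \ref{div} for $i = N-2$ with the Wronskian relation \Ref{W-1}, which reads
\[
u_{N-1}(x)\,y_{N-1}(xq^{-2}) - u_{N-1}(xq^{-2})\,y_{N-1}(x) \,=\, y_{N-2}(x)\,T_{N-1}(x),
\]
to derive two identities linking $B(x)$ to the quasi-polynomial $A_{N-2}(x)$ of Lemma \ref{div}, and then collapse them into the desired divisibility statement using the Bezout-type identity \Ref{abi}.

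First, I would compute $y_{N-1}(x)\,B(x) - u_{N-1}(x)\,A_{N-2}(x)$. The two summands proportional to $y_{N-2}(xq^{-2})\,y_{N-3}(xq^2)\,T_{N-2}(xq^2)$ cancel identically, and the two summands proportional to $y_{N-2}(xq^2)\,y_{N-3}(x)\,T_{N-2}(x)$ combine into that factor times $y_{N-1}(x)u_{N-1}(xq^{-2}) - y_{N-1}(xq^{-2})u_{N-1}(x)$, which by the display above equals $-\+y_{N-2}(x)\,T_{N-1}(x)$. This gives
\[
y_{N-1}(x)\,B(x) \,=\, u_{N-1}(x)\,A_{N-2}(x) \,-\, y_{N-2}(x)\,y_{N-2}(xq^2)\,y_{N-3}(x)\,T_{N-2}(x)\,T_{N-1}(x).
\]
The parallel computation for $y_{N-1}(xq^{-2})\,B(x) - u_{N-1}(xq^{-2})\,A_{N-2}(x)$ cancels the $T_{N-2}(x)$ summands and yields
\[
y_{N-1}(xq^{-2})\,B(x) \,=\, u_{N-1}(xq^{-2})\,A_{N-2}(x) \,+\, y_{N-2}(x)\,y_{N-2}(xq^{-2})\,y_{N-3}(xq^2)\,T_{N-2}(xq^2)\,T_{N-1}(x).
\]

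To combine these identities, I would exploit the fact that admissibility of $\tb$ at level $N-1$ (condition \Ref{cond1}) is exactly the statement that $y_{N-1}$ is an admissible quasi-polynomial in the sense of Section \ref{s-N2}; hence \Ref{abi} supplies quasi-polynomials $a_{N-1},b_{N-1}$ with $a_{N-1}(x)\,y_{N-1}(x) + b_{N-1}(x)\,y_{N-1}(xq^{-2}) = 1$. Multiplying the first displayed identity by $a_{N-1}(x)$, the second by $b_{N-1}(x)$, and adding, the left-hand side collapses to $B(x)$, while the right-hand side takes the form $\bigl[a_{N-1}(x)u_{N-1}(x) + b_{N-1}(x)u_{N-1}(xq^{-2})\bigr]A_{N-2}(x) + y_{N-2}(x)\,S(x)$, where $S$ is an explicit quasi-polynomial. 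By Lemma \ref{div}, $y_{N-2}(x)$ divides $A_{N-2}(x)$, so both terms on the right are divisible by $y_{N-2}(x)$, completing the argument.

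The main obstacle is simply recognizing the Wronskian \Ref{W-1} inside each of the two subtractions; once that is seen, the Bezout identity \Ref{abi} packages the two partial results into a single divisibility statement without requiring any coprimality between $y_{N-2}$ and $y_{N-1}$, which cannot be assumed a priori.
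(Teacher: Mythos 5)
Your proof is correct and rests on the same three ingredients as the paper's: Lemma \ref{div} for the divisibility of $A_{N-2}$, the Wronskian relation \Ref{W-1}, and the Bezout identity \Ref{abi}. The paper simply writes down the combined identity in one stroke, whereas you build it in two stages by forming $y_{N-1}(x)\,B(x)-u_{N-1}(x)\,A_{N-2}(x)$ and its shifted companion, then recombining via \Ref{abi}; the mechanism is identical, but your derivation makes it transparent where the identity comes from. A small bonus of your approach: your explicit remainder
\be
y_{N-2}(x)\,T_{N-1}(x)\bigl(\+b_{N-1}(x)\,y_{N-2}(x\+q^{-2})\,y_{N-3}(x\+q^{\>2})\,T_{N-2}(x\+q^{\>2})-a_{N-1}(x)\,y_{N-2}(x\+q^{\>2})\,y_{N-3}(x)\,T_{N-2}(x)\bigr)
\ee
shows that the paper's displayed identity in its proof of Lemma \ref{div2} has in fact dropped the factors $y_{N-2}(x\+q^{\pm2})\,y_{N-3}(\cdot)\,T_{N-2}(\cdot)$ from the second summand (compare with the analogous, correct, identity in the proof of Proposition \ref{BC}, where the factor $w_{i+1,j}(x\+q^{-2})$ survives); this is a typo that does not affect the divisibility conclusion, since both versions of the remainder carry an explicit $y_{N-2}(x)$.
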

\begin{proof}
By Lemma \ref{div}, \>$y_{N-2}(x)$ divides the quasi\+-polynomial $A_{N-2}(x)$,
see \Ref{Ai}, of type \>$\la_{N-2}+2\+\la_{N-1}+3\+\la_N$.
Relations \Ref{abi} and \Ref{W-1} yield
\vvn.2>
\begin{align*}
B(x)\,=\,A_{N-2}(x)\>
\bigl(a_{N-1}(x)\,u_{N-1}(x)+b_{N-1}(x)\,u_{N-1}(x\+q^{-2})\bigr) &{}+{}
\\[3pt]
{}+\,y_{N-2}(x)\,T_{N-1}(x)\>\bigl(\+b_{N-1}(x)-a_{N-1}(x)\bigr) &{}\,,
\\[-24pt]
\end{align*}
which proves Lemma \ref{div2}.
\end{proof}

Assume that the quasi\+-polynomials \>$u_{i+1}\lc u_N$ are constructed
already, and the following properties hold.
\vsk.3>
A.\enspace
For any $j=i\lc N\?$,
\vvn.3>
\beq
\label{Wj}
W_{N\<-j\>}[\>u_{j+1}\lc u_N\+](x)\,=\,y_j(x)\,Q_{N\<-j}(x)\,.
\eeq

\vsk.4>
B.\enspace
For any $j=i+1\lc N\?$, there is a quasi\+-polynomial \>$w_{i+1,\+j}(x)$
of type $\la_{i+1}\<\lsym+\la_N-\la_j$ such that
\vvn.4>
\beq
\label{wijQ}
W_{N\<-\+i-1\>}[\>u_{i+1}\lc u_{j-1},u_{j+1}\lc u_N\+](x)\,=\,
w_{i+1,\+j}(x)\,Q_{N\<-\+i-1}(x)\,.
\vvn.4>
\eeq
In particular, \>$w_{i+1,\+i+1}(x)=y_{i+1}(x)$, see \Ref{Wj} for $j=i+1$.

\vsk.5>
C.\enspace
For any $j=i+1\lc N\?$, the quasi\+-polynomial \>$y_i(x)$ divides
the quasi\+-polynomial
\vvn.4>
\be
B_{ij}(x)\,=\,
y_i(x\+q^{\>2})\,y_{i-1}(x)\,w_{i+1,\+j}(x\+q^{-2})\,T_i(x)+
y_i(x\+q^{-2})\,y_{i-1}(x\+q^{\>2})\,w_{i+1,\+j}(x)\,T_i(x\+q^{\>2})
\kern-.6em
\vv.4>
\ee
of type \>$\la_i-\la_j+3\>(\la_{i+1}\<\lsym+\la_N)$.
In particular, \>$B_{i+1,\+i+1}(x)=A_{i+1}(x)$, see \Ref{Ai}.

\vsk.5>
For \>$i=N-2$, property A coincide with formula \Ref{W-1}, property B
is straightforward: \>$w_{N-1,N-1}=u_N\?$, \>$w_{N-1,N}=u_{N-1}$, and property C
follows from Lemma \ref{div} for \>$i=N-2$, and Lemma \ref{div2}.

\vsk.2>
Define the quasi\+-polynomials \>$w_{ij}(x)$, \,$j=i\lc N\?$, by the rule
\vvn.3>
\beq
\label{wij}
w_{ij}\,=\,\Fe[\>y_i,y_{i-1}\>\wc_{i+1,\+j}\>T_i\+]\,,
\vv.3>
\eeq
see \Ref{Fx}, where \,$\wc_{i+1,\+j}(x)=\wc_{i+1,\+j}(x\+q^{-2})$.
Property C and Proposition \ref{Vy} yield
\vvn.3>
\beq
\label{Ww}
W_2[\>w_{ij},y_i\+](x)\,=\,y_{i-1}(x)\,w_{i+1,\+j}(x\+q^{-2})\,T_i(x)\,.
\vv-.2>
\eeq

\begin{lem}
\label{div3}
The quasi\+-polynomial \,$y_i(x)\<$ divides the quasi\+-polynomial
\vvn-.5>
\rlap{$\sum_{j=i+1}^N\?(-1)^{\>j}\>w_{ij}(x)\>u_j(x)$}\\
of type \>$\la_i\<\lsym+\la_N$.
\end{lem}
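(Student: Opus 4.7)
The plan is to combine Laplace expansion of the Wronskian $W_{N-i}[u_{i+1},\ldots,u_N](x)$ along its first column with the explicit formula for $w_{ij}$ given by \Ref{Fx} and \Ref{wij}.

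First, substituting \Ref{Fx} into \Ref{wij}, I would write
\be
w_{ij}(x)\,=\,y_{i-1}(x)\,\wc_{i+1,j}(x)\,T_i(x)\,b_i(x)\,+\,y_i(x)\,\Je[\>y_i,\,y_{i-1}\,\wc_{i+1,j}\,T_i\+](x)\,.
\ee
Multiplying by $(-1)^j u_j(x)$ and summing over $j$, the second term contributes a summand manifestly divisible by $y_i(x)$. Hence it suffices to prove that $y_i(x)$ divides the quasi\+-polynomial $\sum_{j=i+1}^N(-1)^j\,\wc_{i+1,j}(x)\,u_j(x)$.

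Next, Laplace expansion along the first column of the defining matrix of $W_{N-i}[u_{i+1},\ldots,u_N](x)$ yields
\be
W_{N-i}[u_{i+1},\ldots,u_N](x)\,=\,\sum_{j=i+1}^N\,(-1)^{j-i+1}\,u_j(x)\,W_{N-i-1}[u_{i+1},\ldots,\hat u_j,\ldots,u_N](x\+q^{-2})\,.
\ee
By property A with $j=i$ the left\+-hand side equals $y_i(x)\,Q_{N-i}(x)$, while by property B each minor on the right equals $\wc_{i+1,j}(x)\,Q_{N-i-1}(x\+q^{-2})$. Hence
\be
\sum_{j=i+1}^N\,(-1)^{j-i+1}\,\wc_{i+1,j}(x)\,u_j(x)\,=\,y_i(x)\,\frac{Q_{N-i}(x)}{Q_{N-i-1}(x\+q^{-2})}\,.
\ee

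A direct telescoping calculation from \Ref{Qk} shows that $Q_{N-i}(x)/Q_{N-i-1}(x\+q^{-2})=\prod_{a=i+1}^{N-1}T_a(x)$, which is a polynomial. Consequently, up to an overall sign, $\sum_{j=i+1}^N(-1)^j\,\wc_{i+1,j}(x)\,u_j(x)$ equals $y_i(x)\,\prod_{a=i+1}^{N-1}T_a(x)$, so it is divisible by $y_i(x)$. Together with the first step this proves the lemma. The key nontrivial inputs are the inductive hypotheses A and B; the rest is sign bookkeeping in the Laplace expansion and the telescoping identity for the $Q$'s, both of which are routine.
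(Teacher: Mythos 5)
Your proof is correct and is essentially the paper's own argument: you split $w_{ij}$ via \Ref{Fx}, observe the $\Je$-part is manifestly divisible by $y_i$, and then evaluate $\sum_j(-1)^j\wc_{i+1,j}(x)u_j(x)$ by the cofactor expansion of $W_{N-i}[u_{i+1}\lc u_N]$ (which is exactly Lemma \ref{Wid2}), property A, property B, and the telescoping of the $Q$'s, arriving at the same identity as the paper's \Ref{wu-}. The only cosmetic difference is that the paper treats the expansion of $w_{ij}$ and the Wronskian identity in the opposite order, and you make the $Q_{N-i}/Q_{N-i-1}$ telescoping explicit.
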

\begin{proof}
Let \>$b_i=\Be[\+y_i\+]$, see \Ref{ABy}. By \Ref{wij} and \Ref{Fx},
\vvn.1>
\begin{align*}
\sum_{j=i+1}^N(-1)^{\>j}\,w_{ij}(x)\>u_j(x)\,=\,y_i &{}(x)\>
\sum_{j=i+1}^N(-1)^{\>j}\,\Je[\>y_i,y_{i-1}\>\wc_{i+1,\+j}\>T_i\+](x)\>+{}
\\[3pt]
{}\++\,b_i &{}(x)\>
\sum_{j=i+1}^N(-1)^{\>j}\>y_{i-1}(x)\,T_i(x)\,w_{i+1,\+j}(x\+q^{-2})\,u_j(x)\,.
\\[-15pt]
\end{align*}
Then formula \Ref{wijQ}, Lemma \ref{Wid2} and formula \Ref{Wj} for \>$j=i$ give
\vvn.3>
\beq
\label{wu-}
\sum_{j=i+1}^N(-1)^{\>j-\+i-1}\>w_{i+1,\+j}(x\+q^{-2})\>u_j(x)\,=\,
\frac{W_{N\<-\+i\>}[\>u_{i+1}\lc u_N\+](x)}{Q_{N\<-\+i-1}(x\+q^{-2})}
\,=\,y_i(x)\<\prod_{k=i+1}^{N-1}\!T_k(x)\,,
\vv.3>
\eeq
which proves the lemma.
\end{proof}

\vsk.1>
Set \>$c_{ij}(x)=w_{ij}(x)/y_i(x)$, \,$j=i\lc N\?$, and
\vvn.2>
\beq
\label{ui}
u_i(x)\,=\,\sum_{j=i+1}^N(-1)^{\>j-\+i-1}\,c_{ij}(x)\>u_j(x)\,.
\vv.2>
\eeq
By Lemma \ref{div3}, \,$u_i(x)$ is a quasi\+-polynomial of type \>$\la_i$.

\begin{prop}
\label{A}
$\;W_{N\<-\+i+1\>}[\>u_i\lc u_N\+](x)\,=\,y_{i-1}(x)\,Q_{N\<-\+i+1}(x)$.
\end{prop}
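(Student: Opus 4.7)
The plan is to compute $W_{N-i+1}[u_i, u_{i+1}, \ldots, u_N](x)$ by substituting the formula \Ref{ui} for $u_i$ into the first row of the Wronskian, then exploiting a ``repeated column'' cancellation to collapse the resulting double sum to a single term. Throughout I will rely on the inductive hypotheses A, B, C together with identity \Ref{Ww}.

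By multilinearity of the determinant in the first row, and writing $c_{ij}(x) = w_{ij}(x)/y_i(x)$, I first split
\[
W_{N-i+1}[u_i, u_{i+1}, \ldots, u_N](x) = \sum_{j=i+1}^{N} (-1)^{j-i-1}\, W_{N-i+1}[c_{ij}u_j, u_{i+1}, \ldots, u_N](x).
\]
For each $j$, since $u_j$ already appears as one of the other rows, I will subtract $c_{ij}(x)$ times that row from the first row. This zeros out the leftmost entry and replaces the entry in column $m+1$ by $[c_{ij}(xq^{-2m}) - c_{ij}(x)]\, u_j(xq^{-2m})$ for $m \ge 1$. Identity \Ref{Ww} then yields the closed form $\delta_{ij}(x) := c_{ij}(x) - c_{ij}(xq^{-2}) = y_{i-1}(x)\, w_{i+1,j}(xq^{-2})\, T_i(x)/[y_i(x)\, y_i(xq^{-2})]$, and telescoping gives $c_{ij}(xq^{-2m}) - c_{ij}(x) = -\sum_{r=0}^{m-1} \delta_{ij}(xq^{-2r})$.

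Next, I expand the modified determinant along its first row (whose first entry is now zero) and interchange the resulting sums over $j$, the column index $m$, and the telescoping index $r$. The inner sum over $j$ becomes $\sum_j (-1)^{j-i-1}\, u_j(xq^{-2m})\, w_{i+1,j}(xq^{-2(r+1)})$. Using property B to replace $w_{i+1,j}(xq^{-2(r+1)})$ by $W_{N-i-1}[u_{i+1}, \ldots, \hat u_j, \ldots, u_N](xq^{-2(r+1)})/Q_{N-i-1}(xq^{-2(r+1)})$, I recognize this (up to the $Q_{N-i-1}$ factor) as the Laplace expansion of the $(N-i)\times(N-i)$ determinant of $u_{i+1}, \ldots, u_N$ evaluated at the shift vector $(m, r+1, r+2, \ldots, r+N-i-1)$. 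Such a determinant vanishes whenever two of its shifts coincide, i.e., whenever $m \in \{r+1, \ldots, r+N-i-1\}$; combined with the range $0 \le r < m \le N-i$ this forces the double sum to collapse to the single surviving case $r=0$, $m = N-i$. There the shift vector $(N-i, 1, 2, \ldots, N-i-1)$ is a length-$(N-i)$ cyclic permutation of $(1, 2, \ldots, N-i)$ with sign $(-1)^{N-i-1}$, so property A evaluates this determinant as $(-1)^{N-i-1} y_i(xq^{-2})\, Q_{N-i}(xq^{-2})$.

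Finally the surviving cofactor $D_{N-i}$, obtained by deleting the first row and last column of the original matrix, is itself $W_{N-i}[u_{i+1}, \ldots, u_N](x) = y_i(x)\, Q_{N-i}(x)$ by property A. Assembling all scalar factors and invoking the identity $Q_{N-i+1}(x)/Q_{N-i}(x) = T_i(x) \cdot Q_{N-i}(xq^{-2})/Q_{N-i-1}(xq^{-2})$, a direct consequence of \Ref{Qk}, yields $W_{N-i+1}[u_i, \ldots, u_N](x) = y_{i-1}(x)\, Q_{N-i+1}(x)$ as required. The main obstacle will be the repeated-column cancellation: recognizing that the apparently large double sum over $(r,m)$ contains only one nonzero term requires careful tracking of which shift vectors have distinct entries, and the surviving sign depends delicately on the cyclic-permutation factor $(-1)^{N-i-1}$; the remaining polynomial bookkeeping with $Q_k$'s is routine.
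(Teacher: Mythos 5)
Your proof is correct and follows essentially the same approach as the paper: row-reduce the first row of the Wronskian using $c_{ij}(x)$, telescope the differences $c_{ij}(xq^{-2m})-c_{ij}(x)$, and exploit the repeated-column vanishing (the content of Lemma~\ref{Wid2}) to collapse to the single surviving $(r,m)=(0,N-i)$ term. The paper packages these cancellations into the intermediate relations \Ref{uc}, \Ref{ucy} (also reused in the proof of Proposition~\ref{BC}) before reading off the determinant, whereas you carry them through an explicit double sum over $(r,m)$; the underlying computation is the same.
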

\begin{proof}
By \Ref{Ww} and \Ref{wijQ},
\begin{align*}
c_{ij}(x)-c_{ij}(x\+q^{-2})\, &{}=\,
\frac{y_{i-1}(x)\,w_{i+1,\+j}(x\+q^{-2})\,T_i(x)}{y_i(x)\,y_i(x\+q^{-2})}
\\[8pt]
& {}=\,\frac{y_{i-1}(x)\,T_i(x)\,
W_{N\<-\+i-1\>}[\>u_{i+1}\lc u_{j-1},u_{j+1}\lc u_N\+](x\+q^{-2})}
{y_i(x)\,y_i(x\+q^{-2})\,Q_{N\<-\+i-1}(x\+q^{-2})}\;.
\\[-14pt]
\end{align*}
So Lemma \ref{Wid2} yields
\vvn.3>
\be
\sum_{j=i+1}^N(-1)^{\>j}\,\bigl(c_{ij}(x)-c_{ij}(x\+q^{-2})\bigr)
\>u_j(x\+q^{-2\+l})\,=\,0\,,\qquad l=1\lc N\<-i-1\,,
\vv-.3>
\ee
and
\vvn-.3>
\begin{align*}
\sum_{j=i+1}^N(-1)^{N\<-j}\,\bigl(c_{ij}(x)-c_{ij}(x\+q^{-2})\bigr)
\>u_j(x\+q^{\>2(i-N)})\,={}&
\\[8pt]
{}=\,\frac{y_{i-1}(x)\,T_i(x)\,W_{N\<-\+i\>}[\>u_{i+1}\lc u_N\+](x\+q^{-2})}
{y_i(x)\,y_i(x\+q^{-2})\,Q_{N\<-\+i-1}(x\+q^{-2})}\,={}&
\,\frac{y_{i-1}(x)\,Q_{N\<-\+i+1}(x)}{W_{N\<-\+i\>}[\>u_{i+1}\lc u_N\+](x)}\;,
\\[-10pt]
\end{align*}
where the last equality also uses formula \Ref{Wj} for \>$j=i$ and
formula \Ref{Qk}. These relations together with \Ref{ui} give
\vvn-.1>
\beq
\label{uc}
u_i(x\+q^{-2\+l})\,=\,
\sum_{j=i+1}^N(-1)^{\>j-\+i-1}\,c_{ij}(x)\>u_j(x\+q^{-2\+l})\,,
\qquad l=1\lc N\<-i-1\,,\kern-2em
\vv-.4>
\eeq
and
\vvn-.2>
\beq
\label{ucy}
u_i(x\+q^{\>2(i-N)})\,=\,\frac{(-1)^{N\<-i}\>y_{i-1}(x)\,Q_{N\<-\+i+1}(x)}
{W_{N\<-\+i\>}[\>u_{i+1}\lc u_N\+](x)}\>+
\sum_{j=i+1}^N(-1)^{\>j-\+i-1}\,c_{ij}(x)\>u_j(x\+q^{\>2(i-N)})\,.
\vv.3>
\eeq
Using equalities \Ref{uc}, \Ref{ucy} in the definition of
\>$W_{N\<-\+i+1\>}[\>u_i\lc u_N\+]$, see \Ref{Wr} completes the proof
of the proposition.
\end{proof}

\begin{prop}
\label{BC}
For any \>$j=i\lc N\?$,
\vvn.3>
\be
W_{N\<-\+i\>}[\>u_i\lc u_{j-1},u_{j+1}\lc u_N\+](x)\,=\,
w_{ij}(x)\,Q_{N\<-\+i}(x)\,,
\vvn.4>
\ee
and the quasi\+-polynomial \,$y_{i-1}(x)\<$ divides the quasi\+-polynomial
\vvn.4>
\be
B_{i-1,\+j}(x)\,=\,
y_{i-1}(x\+q^{\>2})\,y_{i-2}(x)\,w_{ij}(x\+q^{-2})\,T_{i-1}(x)+
y_{i-1}(x\+q^{-2})\,y_{i-2}(x\+q^{\>2})\,w_{ij}(x)\,T_{i-1}(x\+q^{\>2})
\kern-.6em
\vv.4>
\ee
of type \>$\la_{i-1}-\la_j+3\>(\la_i\<\lsym+\la_N)$.
\end{prop}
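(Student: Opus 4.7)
The plan is to prove the two assertions separately: the Wronskian identity by a row operation on the defining determinant, and the divisibility by an explicit algebraic decomposition modeled on the proof of Lemma~\ref{div2}. In both cases the value $j=i$ is vacuous if we adopt the convention $w_{ii}=y_i$: the Wronskian identity reduces to the $j=i$ case of~\Ref{Wj}, i.e.~the inductive hypothesis~A carried into this step, while the divisibility becomes Lemma~\ref{div} applied at level $i-1$. Throughout I will assume $j>i$.

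For the Wronskian identity I would combine the definition~\Ref{ui} of $u_i$ with the relations~\Ref{uc} established inside the proof of Proposition~\ref{A} into the single statement
\be
u_i(xq^{-2l})\,=\,\sum_{k=i+1}^N\,(-1)^{k-i-1}\,c_{ik}(x)\,u_k(xq^{-2l})\,,\qquad l=0,1,\ldots,N-i-1\,,
\ee
in which the scalars $c_{ik}(x)$ do not depend on~$l$. This range of $l$ matches exactly the columns of $W_{N-i}[u_i,\ldots,u_{j-1},u_{j+1},\ldots,u_N](x)$, and $u_j$ is the only function appearing on the right-hand side but absent from the rows of that Wronskian. Subtracting $(-1)^{k-i-1}c_{ik}(x)$ times the row of $u_k$ from the first row, for every $k\ne j$, reduces the first row to $(-1)^{j-i-1}c_{ij}(x)$ times the row of values of $u_j$; relocating that row to its natural position costs another factor $(-1)^{j-i-1}$, the two signs cancel, and \Ref{Wj} applied to $W_{N-i}[u_{i+1},\ldots,u_N]$ delivers $c_{ij}(x)\,y_i(x)\,Q_{N-i}(x)=w_{ij}(x)\,Q_{N-i}(x)$.

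For the divisibility the aim is to exhibit quasi-polynomials $C,E$ with $B_{i-1,j}(x)=A_{i-1}(x)\,C(x)-y_{i-1}(x)\,E(x)$; since Lemma~\ref{div} gives $y_{i-1}\mid A_{i-1}$, this will force $y_{i-1}\mid B_{i-1,j}$. Imitating the Bezout trick of Lemma~\ref{div2}, I would take $C(x)=a_i(x)\,w_{ij}(x)+b_i(x)\,w_{ij}(xq^{-2})$, and use \Ref{Ww} in unpacked form
\be
w_{ij}(x)\,y_i(xq^{-2})-w_{ij}(xq^{-2})\,y_i(x)\,=\,y_{i-1}(x)\,w_{i+1,j}(xq^{-2})\,T_i(x)
\ee
once inside $y_i(xq^{-2})\,C(x)$ and once inside $y_i(x)\,C(x)$. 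In both cases the remaining Bezout combinations collapse via~\Ref{abi}, producing closed expressions for $y_i(xq^{-2})\,C(x)$ and $y_i(x)\,C(x)$ as a shift of $w_{ij}$ plus an $a_i$- or $b_i$-weighted multiple of $y_{i-1}(x)\,w_{i+1,j}(xq^{-2})\,T_i(x)$. Substituting back into the expansion of $A_{i-1}(x)\,C(x)$ reproduces $B_{i-1,j}(x)$ together with a residual that visibly carries the common factor $y_{i-1}(x)$, which identifies~$E(x)$.

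The main technical obstacle will be the bookkeeping in the second part: one must verify that the two residuals produced by the two applications of~\Ref{Ww} share the unshifted factor $y_{i-1}(x)$, rather than some shift of it, so that they combine into a single $y_{i-1}(x)\,E(x)$. This works out precisely because $y_{i-1}(x)$ appears without any shift on the right-hand side of the unpacked Wronskian relation, while the shifts of $y_{i-1}$ remain confined to the same positions where they occur in $A_{i-1}$ and $B_{i-1,j}$. Once both parts are established, they complete the inductive step propagating properties~A,~B,~C from level $i$ to level $i-1$, closing the recursion in Section~\ref{s-Ue}.
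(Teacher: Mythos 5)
Your argument is correct and matches the paper's own proof in both parts: for the Wronskian identity the paper likewise substitutes \Ref{ui} and \Ref{uc} into the first row of the determinant to obtain $W_{N-i}[u_i,\ldots,u_{j-1},u_{j+1},\ldots,u_N]=c_{ij}\,W_{N-i}[u_{i+1},\ldots,u_N]=w_{ij}\,Q_{N-i}$, and for the divisibility it exhibits the same Bezout decomposition $B_{i-1,j}=A_{i-1}\,C+y_{i-1}\,E$ and invokes Lemma \ref{div}. A small dividend of the explicit bookkeeping you propose is that it exposes a typographical slip in the paper's displayed identity (and in the analogous display in the proof of Lemma \ref{div2}): the final factor $b_i(x)-a_i(x)$ does not typecheck and should read $b_i(x)\,y_{i-1}(xq^{-2})\,y_{i-2}(xq^{2})\,T_{i-1}(xq^{2})-a_i(x)\,y_{i-1}(xq^{2})\,y_{i-2}(x)\,T_{i-1}(x)$, although this does not affect the validity of either argument, since only divisibility of the residual by $y_{i-1}(x)$ is used.
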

\begin{proof}
Using \Ref{uc} in the definition of
\>$W_{N\<-\+i\>}[\>u_i\lc u_{j-1},u_{j+1}\lc u_N\+](x)$, we get
\vvn.4>
\be
W_{N\<-\+i\>}[\>u_i\lc u_{j-1},u_{j+1}\lc u_N\+](x)\,=\,
c_{ij}(x)\,W_{N\<-\+i\>}[\>u_{i+1}\lc u_N\+](x)\,=\,
w_{ij}(x)\,Q_{N\<-\+i}(x)\,.
\ee

\vsk.3>
By Lemma \ref{div}, \>$y_{i-1}(x)$ divides the quasi\+-polynomial $A_{i-1}(x)$,
\vvn.08>
see \Ref{Ai}, of type \>\>$\la_{i-1}+2\+\la_i+3\>(\la_{i+1}\<\lsym+\la_N)$.
Relations \Ref{abi} and \Ref{Ww} yield
\vvn.4>
\begin{align*}
B_{i-1,\+j}(x)\,={} &\,
A_{i-1}(x)\>\bigl(a_i(x)\,w_{ij}(x)+b_i(x)\,w_{ij}(x\+q^{-2})\bigr)+{}
\\[4pt]
&{}\?+\,y_{i-1}(x)\,T_i(x)\,w_{i+1,\+j}(x\+q^{-2})\>
\bigl(\+b_i(x)-a_i(x)\bigr)\,,
\\[-13pt]
\end{align*}
which proves the second part of the proposition.
\end{proof}

Propositions \ref{A} and \ref{BC} shows that properties A\>--\,C with \>$i$
replaced by \>$i-1$ are valid. So we can construct recursively all
quasi\+-polynomials \>$u_1\lc u_N$ satisfying relations \Ref{Wuu}.

\subsection{Proof of Theorem \ref{main}}
\label{s-Te}
In this section we will show that the sequence of monic polynomials
\>$\Te=(T_1\lc T_{N-1},1)$ is a preframe of the collection of quasi\+-polynomials
\>$\Ue=(u_1\lc u_N)$ constructed in Section \ref{s-Ue}.

\vsk.2>
It is shown in Section \ref{s-Ue} that the quasi\+-polynomials \>$u_1\lc u_N\<$
satisfy relations \Ref{Wuu}, see Proposition \ref{A}. Moreover, for any
\>$1\le i\le j\le N\?$, there is a quasi\+-polynomial \>$w_{ij}(x)$ of type
\>$\la_i\lsym+\la_N-\la_j$ such that
\vvn.4>
\beq
\label{WQ}
W_{N\<-\+i\>}[\>u_i\lc u_{j-1},u_{j+1}\lc u_N\+](x)\,=\,
w_{ij}(x)\,Q_{N\<-\+i}(x)\,,
\vvn.4>
\eeq
see Proposition \ref{BC}, and
\vvn.3>
\beq
\label{W2}
W_2[\>w_{ij},y_i\+](x)\,=\,y_{i-1}(x)\,w_{i+1,\+j}(x\+q^{-2})\,T_i(x)\,,
\vv.2>
\eeq
cf.~\Ref{Ww}.

\begin{lem}
\label{www}
For any \,$1\le i\le j<k\le N\?$, there is a quasi\+-polynomial
\,$\wt_{i\+;\+j,\+k}(x)$ of type \>$\la_i\<\lsym+\la_N-\la_j\<-\la_k$
such that
\vvn.3>
\be
W_2[\>w_{ij},w_{ik}\+](x)\,=\,y_{i-1}(x)\,\wt_{i\+;\+j,\+k}(x)\,T_i(x)\,.
\vv.3>
\ee
\end{lem}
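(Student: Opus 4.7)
The plan is to reduce $W_2[w_{ij},w_{ik}]$ to level-$(i{+}1)$ data via $\Ref{W2}$ and then run a downward induction on $i$ from $N-1$ to $1$. First I would apply $\Ref{W2}$ with indices $j$ and $k$ separately, multiply the two resulting relations by $w_{ik}(x\+q^{-2})$ and $w_{ij}(x\+q^{-2})$ respectively, and subtract; the $y_i(x)$ terms cancel and one obtains
\begin{equation*}
y_i(x\+q^{-2})\,W_2[w_{ij},w_{ik}](x) \,=\, y_{i-1}(x)\,T_i(x)\,H_{ijk}(x\+q^{-2})\,,
\end{equation*}
where $H_{ijk}(x) = w_{i+1,j}(x)\,w_{ik}(x) - w_{i+1,k}(x)\,w_{ij}(x)$. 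Since $\tb$ is admissible, the quasi-polynomial $y_i$ is admissible too, so $y_i(x\+q^{-2})$ is a nonzero quasi-polynomial. The lemma therefore reduces to showing that $y_i(x)$ divides $H_{ijk}(x)$ in the quasi-polynomial sense; one can then set $\wt_{i;j,k}(x) = H_{ijk}(x\+q^{-2})/y_i(x\+q^{-2})$.

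The base case $i=N-1$ has only the pair $(j,k)=(N-1,N)$; by $\Ref{W-1}$, $W_2[w_{N-1,N-1},w_{N-1,N}] = W_2[y_{N-1},u_{N-1}] = -y_{N-2}\,T_{N-1}$, so $\wt_{N-1;N-1,N} = -1$. For the inductive step, I would split into two subcases. If $j=i$, then $w_{ii}=y_i$ and $\Ref{W2}$ directly gives $W_2[y_i,w_{ik}](x) = -y_{i-1}(x)\,w_{i+1,k}(x\+q^{-2})\,T_i(x)$, so $\wt_{i;i,k}(x) = -w_{i+1,k}(x\+q^{-2})$; this subcase does not use the inductive hypothesis.

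If $j>i$ (so $i+1 \le j < k \le N$), then $\Ref{wij}$ combined with $\Ref{Fx}$ gives
\begin{equation*}
w_{ij}(x) \,=\, y_{i-1}(x)\,w_{i+1,j}(x\+q^{-2})\,T_i(x)\,b_i(x) \,+\, y_i(x)\,\Je[\>y_i,y_{i-1}\>\wc_{i+1,j}\>T_i](x)\,,
\end{equation*}
with $b_i = \Be[y_i]$, and analogously for $w_{ik}$. Substituting into $H_{ijk}$ and reducing modulo $y_i(x)$, the terms carrying a factor of $y_i(x)$ drop out, leaving
\begin{equation*}
H_{ijk}(x) \,\equiv\, y_{i-1}(x)\,T_i(x)\,b_i(x)\,W_2[w_{i+1,j},w_{i+1,k}](x) \pmod{y_i(x)}\,.
\end{equation*}
By the inductive hypothesis at level $i{+}1$, $W_2[w_{i+1,j},w_{i+1,k}] = y_i\,\wt_{i+1;j,k}\,T_{i+1}$, which is divisible by $y_i$; hence $y_i \mid H_{ijk}$, completing the induction.

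The main obstacle is to pin down the modular reduction cleanly and to justify the final cancellation of $y_i(x\+q^{-2})$. Both are fine because $y_i$ is log-free and admissible: divisibility of a quasi-polynomial $f(x) = x^\al p(x,\log x)$ by $y_i(x) = x^{\la_{i+1}\lsym+\>\la_N}p_i(x)$ reduces to the polynomial divisibility $p_i(x) \mid p(x,s)$, and the ring of quasi-polynomials has no zero divisors, so cancellation by the nonzero quasi-polynomial $y_i(x\+q^{-2})$ is legitimate and produces an honest quasi-polynomial $\wt_{i;j,k}$ of the stated type.
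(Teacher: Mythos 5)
Your proof is correct, but the argument is genuinely different from the paper's. The paper handles the case $j>i$ in one step: it applies Lemma \ref{Wid2} with $k=3$ to the triple $(w_{ij},w_{ik},y_i)$, producing two expansion identities, and then takes the $a_i(x)$- and $b_i(x)$-weighted combination so that the B\'ezout relation \Ref{abi} collapses the left side to $W_2[\>w_{ij},w_{ik}\+]$; the right side is then read off as an explicit quasi-polynomial multiple of $y_{i-1}\+T_i$ via \Ref{W2}, requiring no divisibility claim and no induction on $i$. Your argument uses only one of the two expansion identities --- your displayed relation with $y_i(x\+q^{-2})$ on the left is exactly the $l=1$ identity of Lemma \ref{Wid2} for $k=3$, re-derived by hand from the two instances of \Ref{W2} --- and because the factor $y_i(x\+q^{-2})$ survives, you must prove $y_i\mid H_{ijk}$ separately, which you do by a downward induction on $i$, combining the decomposition $w_{ij}=y_{i-1}\,\wc_{i+1,\+j}\,T_i\,b_i+y_i\,\Je[\,\cdot\,]$ from \Ref{wij}, \Ref{Fx} with the level-$(i{+}1)$ hypothesis. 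Both arguments are sound; the paper's is shorter and local to a single level $i$, while yours costs an extra induction and a modular step but exposes more concretely how the level-$(i{+}1)$ Wronskian $W_2[\>w_{i+1,\+j},w_{i+1,\+k}\+]$ enters the picture.
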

\begin{proof}
For \>$j=i$, we have \>$w_{ii}(x)=y_i(x)$ and
\>$\wt_{i\+;\+i,\+k}(x)=-\>w_{i+1,\+k}(x\+q^{-2})$ by \Ref{W2}, so
the statement holds. For $j>i$, by Lemma \ref{Wid2} and formula \Ref{abi}
we have
\vvn.3>
\begin{align*}
W_2[\>w_{ij},w_{ik}\+](x)\,=\,{}& \bigl(a_i(x)\,w_{ik}(x)+
b_i(x)\,w_{ik}(x\+q^{-2})\bigr)\,W_2[\>w_{ij},y_i\+](x)-{}
\\[4pt]
{}-{}\,& \bigl(a_i(x)\,w_{ij}(x)+b_i(x)\,w_{ij}(x\+q^{-2})\bigr)\,
W_2[\>w_{ik},y_i\+](x)\,,
\end{align*}
which proves Lemma \ref{www}.
\end{proof}

\begin{prop}
\label{Wwt}
For any \,$1\le i\le j_1\?\lsym<j_k\le N\?$, there is a quasi\+-polynomial
\rlap{\,$\wt_{i\+;\+j_1\<\lc\+j_k}(x)$}\\ of type
\>$\la_i\<\lsym+\la_N-\la_{j_1}\!\<\lsym-\la_{j_k}\?$ such that
\be
W_k[\>w_{i,\+j_1}\lc w_{i,\+j_k}\+](x)\,=\,
\wt_{i\+;\+j_1\<\lc\+j_k}(x)\,
\prod_{l=0}^{k-2}\,\Bigl(\+y_{i-1}(x\+q^{-2\+l})\>
\prod_{m=i}^{i+\+l}\>T_m(x\+q^{-2\+l})\Bigr)\,.
\vv.2>
\ee
\end{prop}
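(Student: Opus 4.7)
We prove the proposition by induction on $k$. The base case $k=1$ is immediate ($\wt_{i;j_1} = w_{i,j_1}$ with the empty product equal to $1$), and $k=2$ is exactly Lemma \ref{www}. For the inductive step with $k \ge 3$, the strategy has two parts. First, evaluate an auxiliary Wronskian of the form $W_k[y_i, w_{i,j'_1}, \ldots, w_{i,j'_{k-1}}](x)$ for every $(k-1)$-element subset $\{j'_1 < \cdots < j'_{k-1}\}$ of $\{j_1, \ldots, j_k\}$. Second, express the target Wronskian as a linear combination of these.

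For the first step, perform column reduction on the $k \times k$ Wronski matrix using identity (\ref{W2}) and its shifts: for $b$ descending from $k$ to $2$, replace column $b$ by $y_i(xq^{-2(b-1)})$ times column $b-1$ minus $y_i(xq^{-2(b-2)})$ times column $b$. By (\ref{W2}), this annihilates the entries in the $y_i$-row of columns $2, \ldots, k$ and turns the entry in the $w_{i,j'_{c-1}}$-row of column $b$ into $y_{i-1}(xq^{-2(b-2)}) w_{i+1,j'_{c-1}}(xq^{-2(b-1)}) T_i(xq^{-2(b-2)})$. Expanding the transformed determinant along its first row, factoring $y_{i-1}(xq^{-2l}) T_i(xq^{-2l})$ out of each column, cancelling the accumulated $y_i$ factors produced by the column operations, and applying the inductive hypothesis at level $i+1$ with $k-1$ arguments to $W_{k-1}[w_{i+1,j'_1}, \ldots, w_{i+1,j'_{k-1}}](xq^{-2})$ yields
\[
W_k[y_i, w_{i,j'_1}, \ldots, w_{i,j'_{k-1}}](x) \,=\, (-1)^{k-1}\, \wt_{i+1;j'_1,\ldots,j'_{k-1}}(xq^{-2}) \cdot R_k(x),
\]
where $R_k(x)$ denotes the product on the right-hand side of the claim.

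For the second step, apply the cofactor expansion identity to the $(k+1)\times(k+1)$ determinant whose rows are $y_i, w_{i,j_1}, \ldots, w_{i,j_k}$ with the last column specialized to $y$. At $y=x$ and $y=xq^{-2}$ this determinant vanishes (two columns coincide); multiplying these two relations by $a_i(x)$ and $b_i(x)$ respectively, adding them, and invoking (\ref{abi}) gives
\[
W_k[w_{i,j_1}, \ldots, w_{i,j_k}](x) \,=\, \sum_{b=1}^{k} (-1)^{b+1} \bigl[a_i(x) w_{i,j_b}(x) + b_i(x) w_{i,j_b}(xq^{-2})\bigr] \, W_k[y_i, w_{i,j_1}, \ldots, \hat{w}_{i,j_b}, \ldots, w_{i,j_k}](x).
\]
Substituting the auxiliary formula then yields $W_k[w_{i,j_1}, \ldots, w_{i,j_k}] = R_k \cdot \wt_{i;j_1,\ldots,j_k}$ with the explicit quasi-polynomial
\[
\wt_{i;j_1,\ldots,j_k}(x) \,=\, (-1)^{k-1} \sum_{b=1}^{k} (-1)^{b+1} \bigl[a_i(x) w_{i,j_b}(x) + b_i(x) w_{i,j_b}(xq^{-2})\bigr] \wt_{i+1;j_1,\ldots,\hat{j}_b,\ldots,j_k}(xq^{-2})
\]
of the required type. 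The boundary case $j_1 = i$ (where $w_{i,i} = y_i$) is handled directly by the auxiliary formula. The principal technical obstacle is the careful bookkeeping of the multiple $q^{-2l}$-shifts of $y_{i-1}$, $y_i$, and the $T_m$ through the column reduction, verifying that they combine precisely into $R_k(x)$.
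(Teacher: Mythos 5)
Your proof is correct and follows essentially the same two-case route as the paper: it handles the subcase $j_1=i$ via Lemma~\ref{Wid} (which your column-reduction argument re-derives in the special case $j=1$, $f_1=y_i$, together with the factoring through Lemma~\ref{common}) and formula~\Ref{W2}, then reduces the $j_1>i$ case to the former via the expansion coming from Lemma~\ref{Wid2} combined with \Ref{abi}. The only cosmetic difference is that you correctly record the shifted argument $\wt_{i+1;\,j'_1\<\lc\+j'_{k-1}}(x\+q^{-2})$, whereas the paper displays it without the shift; this has no effect since the two quasi\+-polynomials have the same type, so the existence claim is unchanged.
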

\begin{proof}
We prove the statement by induction with respect to \>$k$.
The case \>$k=2$ \>is the base of induction, see Lemma \ref{www}.

\vsk.2>
Set \>$f_{ij}=W[\>y_i,w_{ij}\+]$. Assume that \>$j_1\<=i$.
By Lemma \ref{Wid} and formula \Ref{W2},
\vvn.6>
\begin{align*}
W_k &{}[\>y_i,w_{i,\+j_2}\lc w_{i,\+j_k}\+](x)\,=\,
\frac{W_{k-1}[\>f_{i,\+j_2}\lc f_{i,\+j_k}\+](x)}
{y_i(x\+q^{-2})\ldots y_i(x\+q^{-2\+k})}\,={}
\\[8pt]
& {}=\,(-1)^{k-1}\>
W_{k-1}[\>w_{i+1,\+j_2}\lc w_{i+1,\+j_k}\+](x\+q^{-2})\,\prod_{l=0}^{k-2}\>\>
\frac{y_{i-1}(x\+q^{-2\+l})\,T_i(x\+q^{-2\+l})}{y_i(x\+q^{-2\+(l\++1)})}
\\[8pt]
& {}=\,(-1)^{k-1}\>\wt_{i+1\+;\+j_2\<\lc\+j_k}(x)\,
\prod_{l=0}^{k-2}\,\Bigl(\+y_{i-1}(x\+q^{-2\+l})\>
\prod_{m=i}^{i+\+l}\>T_m(x\+q^{-2\+l})\Bigr)\,,
\\[-10pt]
\end{align*}
where for the last equality we use the induction assumption.
For $j_1\<>i$, by Lemma \ref{Wid2} and formula \Ref{abi} we have
\vvn.4>
\begin{align*}
& W_k[\>w_{i,\+j_1}\lc w_{i,\+j_k}\+](x)\,={}
\\[6pt]
&\!\<{}=\,\sum_{l=1}^k\,(-1)^{\>l\+-1}
\bigl(a_i(x)\,w_{i,\+j_l}(x)+b_i(x)\,w_{i,\+j_l}(x\+q^{-2})\bigr)\,
W_k[\>y_i,w_{i,\+j_1}\lc w_{i,\+j_{l-1}},
w_{i,\+j_{l+1}}\lc w_{i,\+j_k}\+](x)\,,
\\[-12pt]
\end{align*}
which proves the proposition.
\end{proof}

To complete the proof of Theorem \ref{main} we should show that
for any $k=1\lc N-1$, and any \>\$k\+$-\+ele\-ment subset
$\{\+i_1\lc i_k\}\subset\{1\lc N\+\}$, the polynomial \>$Q_k(x)$
divides the quasi\+-polynomial $W_k[\>u_{i_1}\lc u_{i_k}](x)$ of type
$\la_{i_1}\!\lsym+\+\la_{i_k}$.

\vsk.2>
Let $\{\+j_1\lc j_{N\<-k}\}$ be the complement
of $\{\+i_1\lc i_k\}$ in $\{1\lc N\+\}$. Then by formula \Ref{WQ},
Lemmas \ref{common}, \ref{Wid3}, and formula \Ref{Wuu},
\vvn.5>
\begin{align*}
W_k[\>u_{i_1}\lc u_{i_k}](x)\,&{}=\,\const\cdot
x^{\>(k\++1-N)(\la_1\<\lsym+\>\la_N)}\>\x{}
\\[-6pt]
&\>{}\x\,W_{N\<-\+k}[w_{\+1,\+j_1}\lc w_{\+1,\+j_{N\<-\+k}}]
(x\+q^{\>2\+(N\<-\+k\+-1)})\;Q_{N\<-1}(x)\,\prod_{l=1}^{N\<-\+k\+-1}\>
\frac{Q_{N-1}(x\+q^{\>2\+l})}{Q_N(x\+q^{\>2\+l})}\;.
\\[-22pt]
\end{align*}
Then by Proposition \ref{Wwt},
\vvn.2>
\be
W_k[\>u_{i_1}\lc u_{i_k}](x)\,=\,\const\cdot
\wt_{\+1;\+j_1\<\lc\+j_{N\<-\+k}}(x\+q^{\>2\+(N\<-\+k\+-1)})\,Q_k(x)\,,
\vv.3>
\ee
where the quasi\+-polynomial \,$\wt_{\+1;\+j_1\<\lc\+j_{N\<-\+k}}$ has type
$\la_{i_1}\!\lsym+\+\la_{i_k}$. Theorem \ref{main} is proved.

\section{Difference operators}
\label{s-diff}
\subsection{Difference operator of a collection of quasi\+-polynomials}
\label{s-dipol}
Recall that \>$q\ne0,\pm\+1$. Denote by \>$\tau$ the multiplicative shift
operator that acts on functions of \>$x$ by the rule
\vvn.2>
\be
(\tau f)(x)\,=\,f(x\+q^{-2})\,.
\vv.2>
\ee
A function \>$f(x)$ is called a quasi\+-constant if \>$\tau f=f$.

\vsk.2>
An operator $D=a_0(x)+a_1(x)\,\tau\lsym+a_N(x)\,\tau^{\+N}$, where
\>$a_0\lc a_N$ are functions and $a_N$ is not identically zero, is called
a {\it difference operator of order\/} $N\?$. The functions \>$a_0\lc a_N$ are
the {\it coefficients\/} of \>$D$. If \>$a_N\<=1$, the operator \>$D$ is called
{\it monic\/}.

\vsk.2>
Recall that for any collection of quasi\+-polynomials \>$\Ue=(u_1\lc u_N)$
by definition in Section \ref{s-pol}, \>$W_{N\>}[\>u_1\lc u_N\+]\ne0$.

\begin{lem}
\label{DU}
For any collection of quasi\+-polynomials \,$\Ue=(u_1\lc u_N)$ there exists
a unique monic difference operator \,$\DU$ such that \,$\DU\,u_i=0$ for all
\>$i=1\lc N$.
\end{lem}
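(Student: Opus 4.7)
The plan is to prove existence and uniqueness separately, both resting on the hypothesis that \>$W_N[\+u_1\lc u_N]\ne 0$. For existence I would define
\[
(\DU f)(x)\,=\,\frac{W_{N+1}[\+u_1\lc u_N,f\+](x)}{W_N[\+u_1\lc u_N](x)}\,,
\]
with the discrete Wronskian given by \Ref{Wr}. Expanding the numerator along its last row, whose \,$j$-th entry is \>$f(x\+q^{-2(j-1)})$, writes $(\DU f)(x)$ as $\sum_{j=0}^{N}\<a_j(x)\+f(x\+q^{-2j})$, so $\DU$ is a difference operator of order $N$. The coefficient of \>$\tau^N f$ comes from the $j=N+1$ term of this expansion, and its cofactor is precisely the determinant of the $N\x N$ matrix \>$(u_i(x\+q^{-2(j-1)}))_{i,j=1}^N$, i.e.\ $W_N[\+u_1\lc u_N](x)$. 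Division by the denominator cancels this factor, hence $\DU$ is monic. Finally, if $f=u_i$ for some $i$, the numerator has two identical rows and therefore vanishes, giving $\DU\>u_i=0$.

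For uniqueness, suppose $D$ and $D'$ are monic difference operators of order $N$ satisfying $D\+u_i = D'\+u_i = 0$ for all $i$. Then $D-D'=\sum_{k=0}^{N-1} b_k(x)\+\tau^k$ has order at most $N-1$ and also annihilates each $u_i$. The resulting equations
\[
\sum_{k=0}^{N-1}\+b_k(x)\>u_i(x\+q^{-2k})\,=\,0\,,\qquad i=1\lc N\,,
\]
form a linear system, at each $x$, in the unknowns $b_0(x)\lc b_{N-1}(x)$ whose coefficient determinant is exactly \>$W_N[\+u_1\lc u_N](x)\ne 0$. Cramer's rule then forces $b_0\lc b_{N-1}$ to vanish, so $D=D'$.

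The construction is essentially the discrete counterpart of reconstructing a linear ODE from a fundamental system of solutions, and I do not anticipate any genuine obstacle. The only bookkeeping point is to specify the coefficient ring of the operator: the natural choice is the field of fractions containing ratios of discrete Wronskians of quasi\+-polynomials (equivalently, the field of meromorphic functions on a suitable domain), inside which both the explicit formula for $\DU$ and the application of Cramer's rule make sense.
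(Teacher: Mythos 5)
Your proof is correct and follows essentially the same route as the paper: you define $\DU$ by the ratio of discrete Wronskians exactly as in \Ref{DUf}, and you obtain uniqueness from the invertibility of the matrix $(u_i(x\+q^{-2(j-1)}))_{i,j=1}^N$, which is precisely the linear system the paper invokes. The only difference is cosmetic: you spell out explicitly (via the last-row cofactor expansion) that the coefficient of $\tau^N$ is $1$, a point the paper leaves implicit when it writes $\DU = a_0 \lsym+ a_{N-1}\,\tau^{N-1} + \tau^N$.
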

\begin{proof}
Define the operator \>$\DU$ by the rule
\vvn.2>
\beq
\label{DUf}
\DU\,f\,=\,\frac{W_{N+1\>}[\>u_1\lc u_N,f\>]}{W_{N\>}[\>u_1\lc u_N\+]}\;.
\vv.2>
\eeq
Then clearly \>$\DU\,u_i=0$ for all \>$i=1\lc N$. On the other hand, write
\>$\DU\<=a_0\lsym+a_{N-1}\,\tau^{\+N-1}\?+\tau^{\+N}\<$. Then equalities
\>$\DU\,u_i=0$, \,$i=1\lc N\?$, amount to a system of linear equations on
\>$a_0\lc a_{N-1}$:
\vvn.1>
\beq
\label{aj}
a_0\,u_i\<+a_1\>\tau u_i\<\lsym+a_{N-1}\,\tau^{\+N-1}u_i
%\sum_{j=1}^Na_{j-1}\,\tau^{j-1}u_i
\>=\,-\>\tau^{\+N}u_i\,,\qquad i=1\lc N\>.\kern-2em
\vv.2>
\eeq
Since \>$W_{N\>}[\>u_1\lc u_N\+]\ne0$, the matrix
\>$(\tau^{j-1}\>u_i)_{i,j=1}^N$ \>is invertible
and solution of system \Ref{aj} has is unique.
\end{proof}

\vsk-.2>
The operator \>$\DU$ is called the {\it fundamental difference operator\/}
of the collection \>$\Ue$. Notice that by \Ref{DUf},
\vvn-.6>
\beq
\label{a0}
a_0(x)\>=\,(-1)^N\;
\frac{W_{N\>}[\>u_1\lc u_N\+](x\+q^{-2})}{W_{N\>}[\>u_1\lc u_N\+](x)}\;.
\vv.2>
\eeq

\begin{lem}
\label{Duf}
Let \,$\Ue=(u_1\lc u_N)$ be a collection of quasi\+-polynomials. Any solution
\>$f$ of the difference equation \>$\DU\>f=0$ is a linear combination of
\,$u_1\lc u_N$ with quasi\+-constant coefficients.
\end{lem}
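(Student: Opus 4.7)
Proof proposal. The plan is to exploit the explicit formula \Ref{DUf}, which expresses $\DU$ as a ratio of Wronskians. Applied to a solution $f$ of $\DU f=0$, it gives that $W_{N+1}[\>u_1\lc u_N,f\>](x)=0$ at every $x$ where $W_{N\>}[\>u_1\lc u_N\+](x)\ne 0$.

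The first step is to produce candidate coefficients $c_1(x)\lc c_N(x)$ for the representation $f=\sum_{i=1}^N c_i\>u_i$. Since $W_{N\>}[\>u_1\lc u_N\+]$ is not identically zero, at every $x$ where it is nonzero the matrix $\bigl(u_i(x\+q^{-2(j-1)})\bigr)_{i,j=1}^N$ is invertible; Cramer's rule applied to the linear system
\be
\sum_{i=1}^N c_i(x)\>u_i(x\+q^{-2(j-1)})\,=\,f(x\+q^{-2(j-1)})\,,\qquad j=1\lc N\,,
\ee
determines $c_1(x)\lc c_N(x)$ uniquely. The vanishing of the $(N+1)\x(N+1)$ Wronskian $W_{N+1}[\>u_1\lc u_N,f\>]$ forces this identity to persist for $j=N+1$ as well. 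The case $j=1$ yields $f(x)=\sum_{i=1}^N c_i(x)\>u_i(x)$.

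The second step is to verify that each $c_i$ is a quasi-constant. Replacing $x$ by $x\+q^{-2}$ in the displayed identities for $j=1\lc N$ and subtracting the original identities for $j=2\lc N+1$ gives
\be
\sum_{i=1}^N\bigl(c_i(x)-c_i(x\+q^{-2})\bigr)\>u_i(x\+q^{-2j})\,=\,0\,,\qquad j=1\lc N\,.
\ee
The coefficient matrix $\bigl(u_i(x\+q^{-2j})\bigr)_{i,j=1}^N$ has determinant equal to $W_{N\>}[\>u_1\lc u_N\+](x\+q^{-2})$, which is nonzero. Hence $c_i(x)=c_i(x\+q^{-2})$ for every $i$, i.e.\ $\tau c_i=c_i$.

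I do not foresee serious obstacles. The only minor point is that $c_1\lc c_N$ are a priori defined only at points where $W_{N\>}[\>u_1\lc u_N\+](x)\ne 0$; since they are already constant along the shift orbits $x\map x\+q^{-2}$, the quasi-constancy relation lets one extend them canonically past the exceptional locus, and the identity $f=\sum_{i=1}^N c_i\>u_i$ extends to wherever both sides are defined.
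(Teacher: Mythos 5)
Your proof is correct, and while the first step (defining the coefficients $c_i$) coincides in substance with the paper's — Cramer's rule applied to the linear system is the same thing as the Wronskian-ratio formula the paper writes for $c_i$, and the vanishing of $W_{N+1}[\>u_1\lc u_N,f\>]$ encoded in \Ref{DUf} is what both arguments use to get $f=\sum_i c_i\+u_i$ — the quasi-constancy step is handled by a genuinely different argument. The paper first observes that $c_i\ne0$ forces $W_{N\>}[\>u_1\lc u_{i-1},f,u_{i+1}\lc u_N\+]\ne0$, then swaps $f$ in place of $u_i$ to form a collection $\Ute_i$, invokes the uniqueness part of Lemma~\ref{DU} to get $\DUti_i=\DU$, and reads off $\tau c_i=c_i$ from formula \Ref{a0} for the constant term $a_0$. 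You instead compare the shifted $N\times N$ linear system with the original one (which you have already extended to $j=N+1$ using the $(N+1)\times(N+1)$ Wronskian), and conclude $c_i(x)=c_i(x\+q^{-2})$ because the coefficient matrix has nonzero determinant $W_{N\>}[\>u_1\lc u_N\+](x\+q^{-2})$. Your route is more elementary and self-contained: it does not appeal to the uniqueness of the fundamental difference operator, and it never needs $\Ute_i$ to be a collection of quasi-polynomials, which is a mild technical advantage since the lemma quantifies over arbitrary solutions $f$ rather than only quasi-polynomial ones. The paper's route is shorter once Lemma~\ref{DU} and formula \Ref{a0} are in hand. Both arguments leave the same small gap implicit — the $c_i$ are a priori defined only off the zero set of $W_N$, and the extension across it is asserted rather than carried out — so on that point you are matching the paper's level of rigor.
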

\begin{proof}
Let
\vvn-.8>
\beq
\label{ci}
c_i(x)\,=\,-\>\frac{W_{N\>}[\>u_1\lc u_{i-1},f,u_{i+1}\lc u_N\+](x)}
{W_{N\>}[\>u_1\lc u_N\+](x)}\;,\qquad i=1\lc N\>.\kern-2em
\vv-.2>
\eeq
By Lemma \ref{Wid3}, \,$f\<=c_1\+u_1\lsym+c_N\>u_N$. If \>$c_i\<\ne 0$,
consider a collection \,$\Ute_i\<=(u_1\lc u_{i-1},f,u_{i+1}\lc u_N)$.
Lemma \ref{DU} implies that \>$\DUti_i\!=\DU$, and by formula \Ref{a0},
\vvn.2>
\be
\frac{W_{N\>}[\>u_1\lc u_{i-1},f,u_{i+1}\lc u_N\+](x\+q^{-2})}
{W_{N\>}[\>u_1\lc u_{i-1},f,u_{i+1}\lc u_N\+](x)}\,=\,
\frac{W_{N\>}[\>u_1\lc u_N\+](x\+q^{-2})}{W_{N\>}[\>u_1\lc u_N\+](x)}\;.
\vv.2>
\ee
Thus \>$c_i(x\+q^{-2})=c_i(x)$ for all \>$i=1\lc N\?$.
\end{proof}

\begin{lem}
\label{DUprod}
For a collection of quasi\+-polynomials \,$\Ue=(u_1\lc u_N)$,
set \,$v_N=u_N\<$ and
\vvn.2>
\beq
\label{vi}
v_i\,=\,\frac{W_{N\<-\+i+1\>}[\>u_i\lc u_N\+]}
{W_{N\<-\+i\>}[\>u_{i+1}\lc u_N\+]}\;,\qquad i=1\lc N-1\,.\kern-2em
\vv-.1>
\eeq
Then
\vvn-.2>
\beq
\label{DUv}
\DU\,=\,\Bigl(\+\tau\>-\>\frac{\tau v_1}{v_1}\,\Bigr)\,\ldots\,
\Bigl(\+\tau\>-\>\frac{\tau v_N}{v_N}\,\Bigr)\,.
\vv.2>
\eeq
\end{lem}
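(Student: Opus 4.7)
The plan is to use the uniqueness statement in Lemma \ref{DU} together with a descending induction on the number of factors on the right-hand side of \Ref{DUv}. The product is manifestly a monic difference operator of order $N$, so by Lemma \ref{DU} it suffices to verify that it annihilates each $u_i$, $i=1\lc N$.

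For $k=0,1\lc N\<$ I would introduce
\[
E_k\,=\,\Bigl(\+\tau\>-\>\frac{\tau v_{k+1}}{v_{k+1}}\,\Bigr)\,\ldots\,
\Bigl(\+\tau\>-\>\frac{\tau v_N}{v_N}\,\Bigr)\,,
\]
with $E_N=1$ (empty product), and prove by descending induction on $k$ that $E_k$ is the fundamental difference operator of the sub-collection $(u_{k+1}\lc u_N)$; the desired identity is then the special case $k=0$. The base case $k=N-1$ is immediate, since $v_N=u_N$ makes $E_{N-1}=\tau-\tau u_N/u_N$ the unique monic order-one operator killing $u_N$.

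For the inductive step, assume $E_{k+1}$ is the fundamental operator of $(u_{k+2}\lc u_N)$. Then $E_k=(\tau-\tau v_{k+1}/v_{k+1})\,E_{k+1}$ is monic of order $N-k$ and kills $u_{k+2}\lc u_N$ automatically, so only $E_k\>u_{k+1}=0$ needs checking. Formula \Ref{DUf} applied to $(u_{k+2}\lc u_N)$ gives
\[
E_{k+1}\,u_{k+1}\,=\,\frac{W_{N-k}[\>u_{k+2}\lc u_N,u_{k+1}\>]}
{W_{N-k-1}[\>u_{k+2}\lc u_N\>]}\,=\,(-1)^{N-k-1}\,v_{k+1}\,,
\]
where the sign comes from moving $u_{k+1}$ from the last to the first row in the determinant definition \Ref{Wr} of $W_{N-k}$. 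The operator $\tau-\tau v_{k+1}/v_{k+1}$ annihilates every scalar multiple of $v_{k+1}$, so $E_k\>u_{k+1}=0$, and another application of Lemma \ref{DU} identifies $E_k$ as the fundamental operator of $(u_{k+1}\lc u_N)$. Taking $k=0$ then gives $E_0=\DU$, which is \Ref{DUv}.

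The main (and essentially only) obstacle is tracking the sign $(-1)^{N-k-1}$ produced by the row permutation in the Wronskian; once that is in place, the rest is a clean telescoping use of Lemma \ref{DU}. I would also note that the hypothesis that each $v_i$ in \Ref{vi} is well-defined implicitly requires $W_{N-i}[\>u_{i+1}\lc u_N\>]\ne0$ for every $i$, which is exactly what allows Lemma \ref{DU} to be applied to every sub-collection $(u_{i+1}\lc u_N)$ encountered in the induction.
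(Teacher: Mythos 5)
Your argument is correct and reaches the same conclusion by a mildly different mechanism. The paper's proof applies Lemma~\ref{Wid} repeatedly to establish the explicit formula
\be
\Bigl(\tau-\frac{\tau v_i}{v_i}\Bigr)\cdots\Bigl(\tau-\frac{\tau v_N}{v_N}\Bigr)\,f
\,=\,\frac{W_{N-i+2}[\,u_i\lc u_N,f\,]}{W_{N-i+1}[\,u_i\lc u_N\,]}
\ee
for all $f$ and every $i$, and then reads off the result at $i=1$ by comparison with \Ref{DUf}. You instead run a descending induction anchored on the uniqueness clause of Lemma~\ref{DU}: once $E_{k+1}$ is identified as the fundamental operator of the tail $(u_{k+2}\lc u_N)$, formula \Ref{DUf} for that subcollection gives $E_{k+1}u_{k+1}=(-1)^{N-k-1}v_{k+1}$, the outer factor $\tau-\tau v_{k+1}/v_{k+1}$ annihilates any constant multiple of $v_{k+1}$, and uniqueness then forces $E_k$ to be the fundamental operator of $(u_{k+1}\lc u_N)$. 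The two routes carry essentially the same content --- both amount to showing that each truncated product is the fundamental operator of the corresponding tail --- but yours trades the Wronskian cascade identity of Lemma~\ref{Wid} for the uniqueness argument, which leaves only one determinant sign to bookkeep and is arguably tidier. Your final remark about the intermediate Wronskians $W_{N-i}[\,u_{i+1}\lc u_N\,]$ being nonzero is not an additional hypothesis: it follows from $W_N[\,u_1\lc u_N\,]\ne0$ (a discrete linear dependence among a subcollection would force the full Wronskian to vanish), and it is tacitly relied upon in the paper's proof as well.
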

\begin{proof}
By applying repeatedly Lemma \ref{Wid}, we get
\vvn.3>
\be
\Bigl(\+\tau\>-\>\frac{\tau v_i}{v_i}\,\Bigr)\,\ldots\,
\Bigl(\+\tau\>-\>\frac{\tau v_N}{v_N}\,\Bigr)\>f\,=\,
\frac{W_{N\<-\+i+2\>}[\>u_i\lc u_N,f\>]}{W_{N\<-\+i+1\>}[\>u_i\lc u_N\+]}\,,
\qquad i=1\lc N\>.\kern-2em
\vv.2>
\ee
Comparing this formula with formula \Ref{DUf} completes the proof.
\end{proof}

\begin{cor}
\label{DUrat}
Let \,$\Ue$ be a regular collection of quasi\+-polynomials. Then the operator
\,$\DU$ has rational coefficients.
\end{cor}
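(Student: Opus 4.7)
The plan is to combine the factorization of Lemma~\ref{DUprod} with the log\+-freeness built into the notion of regularity. By Lemma~\ref{DUprod},
\be
\DU\,=\,\Bigl(\+\tau\>-\>\frac{\tau v_1}{v_1}\+\Bigr)\,\ldots\,\Bigl(\+\tau\>-\>\frac{\tau v_N}{v_N}\+\Bigr)\,,
\ee
so it suffices to show that $\tau v_i/v_i\in\C(x)$ for every $i=1\lc N$. Composing first\+-order difference operators whose coefficients are rational functions of $x$ yields a difference operator with rational coefficients, because expanding
\be
(\+\tau-r)\+\sum_k\+b_k\+\tau^k\,=\,\sum_k\+\bigl((\tau b_k)\+\tau^{k+1}\<-\>r\+b_k\+\tau^k\bigr)
\ee
shows the new coefficients to be rational combinations of $r$ and of shifts of the $b_k$, while $\C(x)$ is stable under $+$, $\cdot$, and $\tau$.

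Fix a preframe $\Te=(T_1\lc T_N)$ of $\Ue$---for instance, the frame produced by Lemma~\ref{lemframe}. By the definition of a regular collection, the quasi\+-polynomials $y_0\lc y_{N-1}$ given by \Ref{defy} (together with $u_N\<=\+y_{N-1}\+\QT_1$) are log\+-free, and $y_0$ is a scalar multiple of $x^{\+\la_1\lsym+\>\la_N}$. Consequently, for $i=1\lc N-1$,
\be
v_i(x)\,=\,\frac{W_{N\<-\+i\++1\>}[\>u_i\lc u_N\+](x)}{W_{N\<-\+i\>}[\>u_{i+1}\lc u_N\+](x)}\,=\,\frac{y_{i-1}(x)\,\QT_{N\<-\+i\++1}(x)}{y_i(x)\,\QT_{N\<-\+i}(x)}\,,
\ee
while $v_N(x)=y_{N-1}(x)\,\QT_1(x)$. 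In either case $v_i$ is a ratio of log\+-free quasi\+-polynomials. Since any log\+-free quasi\+-polynomial has the form $x^{\>\al}p(x)$ with $p$ a genuine polynomial, we can write $v_i(x)=x^{\gm_i}\phi_i(x)$ for some $\gm_i\in\C$ and some rational function $\phi_i(x)\in\C(x)$.

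The remaining step is then a one\+-line calculation:
\be
\frac{(\tau v_i)(x)}{v_i(x)}\,=\,\frac{v_i(x\+q^{-2})}{v_i(x)}\,=\,q^{-2\+\gm_i}\,\frac{\phi_i(x\+q^{-2})}{\phi_i(x)}\;\in\;\C(x)\,,
\ee
which together with the first paragraph yields Corollary~\ref{DUrat}. I do not anticipate any serious obstacle: the definition of regularity is engineered precisely so that the logarithmic factors cancel out of each $v_i$, and with those gone the quasi\+-shift ratio $\tau v_i/v_i$ is automatically rational.
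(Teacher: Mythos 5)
Your proof is correct and takes essentially the same route as the paper: the paper's one-line argument simply asserts that for a regular collection the ratios $\tau v_i/v_i$ in the factorization \Ref{DUv} are rational, and your write-up fills in exactly the details behind that assertion (log-freeness of the $y_i$'s via \Ref{defy}, hence $v_i=x^{\gm_i}\phi_i(x)$ with $\phi_i\in\C(x)$, hence $\tau v_i/v_i\in\C(x)$).
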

\begin{proof}
For a regular collection of quasi\+-polynomials \>$\Ue$, the expressions
\>$\tau v_i\+/\<v_i\<$ in formula \Ref{DUv} are rational functions,
which proves the claim.
\end{proof}

\begin{prop}
\label{semi}
Let \,$\Ue=(u_1\lc u_N)$ be a collection of quasi\+-polynomials such that
the operator \,$\DU$ has rational coefficients. Then the collection \,$\Ue\<$
is semiregular, that is, the quasi\+-polynomial \,$W_{N\>}[\>u_1\lc u_N\+]$ is
\>{\rm log}\<-free.
\end{prop}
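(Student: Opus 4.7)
The plan is to derive log\+-freeness of $W = W_{N}[\>u_1\lc u_N\+]$ directly from formula \Ref{a0}. Since \>$\DU$ has rational coefficients, so does its bottom coefficient $a_0$; hence
\[
R(x)\,=\,\frac{W(xq^{-2})}{W(x)}\,=\,(-1)^N a_0(x)
\]
is a rational function of \,$x$.

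Write \>$W(x) = x^{\>\al}\+p(x,\log x)$ with $\al = \la_1\lsym+\la_N$, and expand $p(x,s) = \sum_{k=0}^d p_k(x)\>s^k$ with $p_d \ne 0$. Log\+-freeness of $W$ amounts to $d = 0$. The identity $W(xq^{-2}) = R(x)\>W(x)$, after cancelling $x^{\>\al}$, reads
\[
q^{-2\al}\sum_{k=0}^d p_k(xq^{-2})\>(s - 2\log q)^k\,=\,R(x)\sum_{k=0}^d p_k(x)\>s^k,
\]
an identity of polynomials in $s = \log x$ with rational coefficients in $x$. Comparing coefficients of $s^d$ gives $q^{-2\al} p_d(xq^{-2}) = R(x)\>p_d(x)$; plugging this into the coefficient of $s^{d-1}$ and simplifying leads to
\[
\frac{p_{d-1}(xq^{-2})}{p_d(xq^{-2})}\,-\,\frac{p_{d-1}(x)}{p_d(x)}\,=\,2\+d\>\log q\,.
\]
So the rational function $h = p_{d-1}/p_d$ satisfies $h(xq^{-2}) - h(x) = 2d\log q$.

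The crux is the elementary claim: if $h$ is rational and $h(xq^{-2}) - h(x) = c$ for a constant $c$, then $c = 0$. When $q^{2n} = 1$ for some $n \ge 1$, iterating $n$ times gives $0 = h(x) - h(x) = nc$, so $c = 0$. Otherwise $x \mapsto xq^{-2}$ has infinite order; decomposing $h = P + R$ into its polynomial part $P$ and a strictly proper rational part $R$, the shift preserves this decomposition, so matching proper parts yields $R(xq^{-2}) = R(x)$. An infinite\+-orbit invariant rational function vanishing at infinity is identically zero, so $R \equiv 0$. Writing $P(x) = \sum a_k x^k$, the identity $\sum a_k(q^{-2k} - 1)\+x^k = c$ combined with $q^{-2k} \ne 1$ for $k \ge 1$ forces $a_k = 0$ for $k \ge 1$, and hence $c = 0$.

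Applied with $c = 2d\log q$, the claim gives $2d\log q = 0$; since $\log q \ne 0$ (because $q \ne 1$ for the chosen branch), we conclude $d = 0$, i.e., $W$ is log\+-free, so $\Ue$ is semiregular. The main obstacle is the rational\+-function claim, which must be split into the generic and root\+-of\+-unity cases; this is precisely where the paper's claim of handling roots of unity without restriction enters. Once this lemma is settled, the remainder is a direct comparison of the two top coefficients in the \,$\log x$\+-expansion of the identity $W(xq^{-2}) = R(x)\>W(x)$.
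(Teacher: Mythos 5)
Your argument is correct and reaches the conclusion by essentially the same route as the paper: the discrete Wronskian $W=W_N[u_1,\ldots,u_N]$ satisfies a first-order difference equation with rational coefficient (namely $W(xq^{-2})=(-1)^N a_0(x)\,W(x)$, by formula \Ref{a0} together with the hypothesis that $\DU$ has rational coefficients), so one expands $W$ in powers of $\log x$ and compares the top two coefficients. The difference is in how the resulting constraint is dispatched. The paper invokes Lemma \ref{top} to conclude that $W$ and its top part $\langle W\rangle$ satisfy the same first-order equation, so the ratio $c=W/\langle W\rangle$ is a quasi-constant, and then appeals to Lemma \ref{qua}, whose own proof rules out $\log$-dependence by comparing orders of vanishing of the rational coefficients $r_k$, $r_{k-1}$ at $x=0$. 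You instead substitute the top-coefficient relation into the next-to-top one to isolate a single clean statement: a rational function $h$ cannot satisfy $h(xq^{-2})-h(x)=c$ with $c$ a nonzero constant. You prove this via the polynomial-plus-strictly-proper decomposition when $q$ has infinite multiplicative order (poles would form an infinite orbit), and by iterating the relation $n$ times when $q^{2n}=1$. Your lemma is easier to state and check than the paper's order-at-zero matching, and the iteration trick handles the root-of-unity case particularly transparently. One small point of presentation: the derivation of $h(xq^{-2})-h(x)=2d\log q$ implicitly assumes $d\ge 1$ (so that $p_{d-1}$ exists); the argument is really a proof by contradiction that $d\ge1$ is impossible, and is best phrased that way, but the logic is sound.
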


\nt
We will prove Proposition \ref{semi}, as well as
Propositions \ref{DUreg}\>\>--\,\ref{DUtgen} below, in Section \ref{s-proofs}.

\begin{prop}
\label{DUreg}
Let \,$\Ue$ be a collection of quasi\+-polynomials such that the operator
\,$\DU$ has rational coefficients. Then there is a regular collection
of quasi\+-polynomials \,$\Ute$ such that \,$\DUt=\DU$.
\end{prop}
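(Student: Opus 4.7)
The plan is to construct \,$\Ute=(\ut_1\lc\ut_N)$ recursively so that (i) every \,$\ut_i\<$ lies in \,$\ker\DU$, (ii) \,$\ut_N\<$ is log-free, and (iii) each sub-Wronskian \,$W_k[\,\ut_{j_1}\lc\ut_{j_k}]\<$ is log-free for every \,$\{j_1\lc j_k\}\<\subset\<\{\+1\lc N\}$. Granted (i)--(iii), the uniqueness part of Lemma \ref{DU} forces \,$\DUt\<=\DU$, condition (iii) makes \,$\Ute\<$ semiregular, and Lemma \ref{lemframe} together with \Ref{defy} then makes each \,$\tilde y_i\<$ log-free, so \,$\Ute\<$ is regular. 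Observe that (ii) is an essential constraint only on \,$\ut_N$: the lower \,$\ut_i\<$ may still carry \,$\log x\<$ factors as long as these cancel in the Wronskians, and this flexibility is necessary, as the $N=2$ example \,$\DU=(\tau-1)^2\<$ arising from \,$\Ue=(1,\>\log x)\<$ already shows.

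The main technical tool is the following extraction principle. If \,$D\<$ has rational coefficients and \,$f\<\in\ker D\<$ is a quasi-polynomial of log-degree \,$d$, then the log-free top log-component \,$g\<$ of \,$f\<$ also belongs to \,$\ker D$. Indeed, setting \,$L=\log x/(2\log\+q)$ so that \,$\tau L=L-1$, and writing \,$f=g\+L^d+\ldots\<$ with lower terms of \,$L$\+-degree strictly less than \,$d$, one has \,$\tau^m\+f=(\tau^m g)\+L^d+\ldots$ for every \,$m$, hence \,$D\+f=L^d\+D\+g+\ldots$\,, and the vanishing of \,$D\+f\<$ forces \,$D\+g=0$.

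The recursion proceeds downward from \,$i=N$. For \,$i=N$, take \,$\ut_N\<$ to be a nonzero log-free top log-component of some \,$u_k$, which lies in \,$\ker\DU\<$ by the extraction principle. Inductively, having chosen \,$\ut_{i+1}\lc\ut_N\<$ so that all their sub-Wronskians are log-free, seek \,$\ut_i\<\in\ker\DU\<$, linearly independent of \,$\ut_{i+1}\lc\ut_N\<$ over quasi-constants, such that the new Wronskian \,$W_{N-i+1}[\,\ut_i,\ut_{i+1}\lc\ut_N]\<$ is log-free. Writing \,$\ut_i\<$ as a quasi-constant linear combination of \,$u_1\lc u_N$, and applying the extraction principle to the new Wronskian, the requirement that its top log-component vanish reduces to a finite system of linear constraints on these coefficients.

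The main obstacle is to prove that this system is solvable at every recursion step while preserving linear independence of the accumulated \,$\ut_i$'s. This is where Proposition \ref{semi} — the log-freeness of \,$W_N[\>\Ue\>]\<$ guaranteed by the rationality of \,$\DU\<$ — enters essentially, together with the Wronskian identities of Lemmas \ref{common} and \ref{Wid} of the appendix: these propagate log-freeness of the top Wronskian downward and control the log-parts of all intermediate Wronskians, ensuring the consistency of the constraint systems at each step. Once solvability is in hand, the three verifications listed at the outset complete the proof.
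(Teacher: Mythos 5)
Your proposal correctly identifies the starting move (extract a log-free element of $\ker\DU$ via the top log-component, which is Lemma \ref{top}) and correctly frames what must be achieved, but the heart of the argument is missing and you acknowledge as much: you write that solvability of the constraint system at each step is "the main obstacle" and then only gesture at Proposition \ref{semi} and the Wronskian identities without giving an argument. That is the whole difficulty, and the gesture does not close it. Concretely, two things are undeveloped. First, the coefficients in a quasi-constant linear combination $\ut_i=\sum_j c_j\+u_j$ do not range over a finite-dimensional space --- quasi-constants include arbitrary rational functions periodic under $x\mapsto x\+q^{-2}$ when $q$ is a root of unity, and monomials $x^{\+\beta}$ with $q^{\+2\beta}=1$ --- so "a finite system of linear constraints on these coefficients" is not a well-posed reduction as stated. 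Second, even granting the constraints are linear in $\ut_i$ by multilinearity of the Wronskian, you give no reason why a solution exists that is simultaneously linearly independent (over quasi-constants) from $\ut_{i+1}\lc\ut_N$; merely observing that the lemmas of the appendix "control the log-parts of all intermediate Wronskians" is not an existence proof.

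The paper avoids the obstacle entirely by changing the shape of the induction. Instead of descending one element at a time through an $N$-term collection, it inducts on $N$: once $u_N$ is arranged to be log-free (exactly your extraction step, with a short case analysis to guarantee the Wronskian stays nonzero and the types are preserved), the operator factors as $\DU=D\cdot(\tau-r(x))$ with $r=\tau u_N/u_N$ rational, and one passes to the collection $\Ue'=(u'_1\lc u'_{N-1})$ with $u'_i=W_2[\>u_i,u_N\+]$. The key point is that $\DUp$ again has rational coefficients, so the induction hypothesis supplies a regular $\Ue''$ with $\DUpp=\DUp$; expressing $u''_i$ in terms of $u'_j$ with quasi-constant coefficients $c_{ij}$, clearing denominators by a quasi-constant polynomial $P(x)$, and setting $\ut_i=P(x)\sum_j c_{ij}u_j$ produces the desired regular $\Ute$, with $W_2[\>\ut_i,\ut_N\+]=P\>u''_i$ and Lemmas \ref{common}, \ref{Wid} transferring the log-freeness of the sub-Wronskians of $\Ue''$ to those of $\Ute$. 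This reduction to $N-1$ is precisely what replaces the unproved "solvability at every recursion step" in your scheme: the induction hypothesis hands you the lower-order structure rather than requiring you to construct each $\ut_i$ by solving a constraint system. As a minor further point, your condition (iii) asking for \emph{all} sub-Wronskians to be log-free is stronger than the definition of a regular collection, which only involves the tail Wronskians $W_{N-i}[\>u_{i+1}\lc u_N\+]$; the overreach is harmless but suggests the target was not quite in focus.
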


Say that $\lab=(\la_1\lc\la_N)$ is {\it dominance\+-free\/} \>if
\>$q^{\>2(\la_i-\la_j)}\<\ne q^{\>2s}$ for all \>$1\le i<j\le N$
and \>$s\in\Z_{\ge1}$.

\begin{prop}
\label{DUregla}
Let \,$\Ue$ be a collection of quasi\+-polynomials of type $\lab$ such that
the operator \,$\DU$ has rational coefficients. Assume that $\lab$ is
dominance\+-free, or \>$q$ is a root of unity. Then there is a regular
collection of quasi\+-polynomials \,$\Ute$ of type $\lab$ such that \,$\DUt=\DU$.
\end{prop}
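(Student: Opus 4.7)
The plan is to strengthen Proposition \ref{DUreg}: it already yields a regular collection $\Ute'=(\tilde u'_1,\ldots,\tilde u'_N)$ generating $\ker\DU$ of some a priori unspecified type $\lab'=(\la'_1,\ldots,\la'_N)$, and we wish to realize the prescribed type $\lab$ while retaining regularity and the same fundamental difference operator.

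First, apply Proposition \ref{DUreg} to obtain such a regular collection $\Ute'$ with $\DUtp=\DU$. By Lemma \ref{Duf}, both $\Ue$ and $\Ute'$ are bases of $\ker\DU$ as a module over the ring of quasi-constants, and by formula \Ref{a0} their $N$th Wronskians differ by a quasi-constant factor. The heart of the proof is to argue that under the dominance-free or root-of-unity hypothesis one can choose $\Ute'$ already with $\la'_i-\la_i\in L:=\{\mu\in\C:q^{-2\mu}=1\}$ for every $i$, with no permutation needed. This requires revisiting the construction behind Proposition \ref{DUreg}, which internally uses a factorization $\DU=(\tau-\phi_1)\cdots(\tau-\phi_N)$ as in \Ref{DUv}; one must show that the factors can be ordered so that the leading behavior $\phi_i(\infty)$ matches $q^{2\la_i}$ up to the shifts coming from polynomial factors. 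The dominance-free assumption is tailored to exclude the resonances $q^{2(\la_i-\la_j)}=q^{2s}$, $i<j$, $s\ge1$, which are precisely the obstructions to reordering such factorizations; the root-of-unity assumption plays a parallel role by enlarging the ring of quasi-constants (since $L\cap\Z\ne\{0\}$ in that case), which provides the extra flexibility to absorb integer shifts.

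Once $\Ute'$ is produced with $\la'_i-\la_i\in L$ index-by-index, the modification is direct: set $\tilde u_i:=x^{\la_i-\la'_i}\tilde u'_i$. Since $\la_i-\la'_i\in L$, the factor $x^{\la_i-\la'_i}$ is a quasi-constant, so $\tilde u_i\in\ker\DU$ and $\tilde u_i$ is a quasi-polynomial of type $\la_i$. Because quasi-constants commute with $\tau$, each Wronskian $W_{N-i}[\tilde u_{i+1},\ldots,\tilde u_N]$ equals $W_{N-i}[\tilde u'_{i+1},\ldots,\tilde u'_N]$ multiplied by the quasi-constant monomial $\prod_{k>i}x^{\la_k-\la'_k}$, so log-freeness is preserved and the collection $\Ute=(\tilde u_1,\ldots,\tilde u_N)$ is regular of type $\lab$. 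The identity $\DUt=\DU$ is then immediate from Lemma \ref{DU}, since $\Ute\subset\ker\DU$ is a basis.

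The main obstacle is the type-matching step, i.e., producing a regular $\Ute'$ whose types match $\lab$ index-by-index modulo $L$. Regularity depends on the \emph{order} of the collection (via the iterated Wronskians $W_{N-i}[\tilde u'_{i+1},\ldots,\tilde u'_N]$), so not every permutation of a regular collection remains regular, and one cannot simply reorder at will. The dominance-free and root-of-unity hypotheses are precisely the conditions that rule out the resonances obstructing such a reordering in the underlying factorization of $\DU$, and unpacking this is where the technical work lies.
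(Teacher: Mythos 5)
Your overall plan---apply Proposition~\ref{DUreg} to get a regular collection $\Ute'$ of some type $\lab'$, and then multiply each $\tilde u'_i$ by a quasi\+-constant power $x^{\la_i-\la'_i}$ to realize the prescribed type $\lab$---is a sensible way to frame the problem, and your step~3 is correct as stated: if $\la'_i-\la_i\in L$ for each $i$, then $x^{\la_i-\la'_i}$ is a quasi\+-constant, $\tilde u_i:=x^{\la_i-\la'_i}\tilde u'_i$ is a quasi\+-polynomial of type $\la_i$ in $\ker\DU$, the iterated Wronskians acquire only \+log\+-free quasi\+-constant factors, and Lemma~\ref{DU} gives $\DUt=\DU$. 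However, you never establish the crucial step~2, that one can choose (or modify) $\Ute'$ so that $\la'_i-\la_i\in L$ index\+-by\+-index; you explicitly defer this with ``unpacking this is where the technical work lies.'' This is the entire content of the proposition: without it the proposal is not a proof but a reduction to a statement that is essentially as hard as the original, and which moreover forces you to re\+-open the inductive construction behind Proposition~\ref{DUreg} anyway.

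The paper does not separate the two propositions as you do; it proves Propositions~\ref{DUreg} and~\ref{DUregla} simultaneously by induction, performing the type correction \emph{inside} the induction at the one place where a permutation is forced. Concretely: when $u_N$ is not \+log\+-free and $W_N[u_1,\ldots,u_{N-1},\bra u_N\ket]=0$, some $u_i$ must be replaced by $u_N$. The coefficient $c_i(x)=x^{\la_N-\la_i}r_i(x)$ from formula~\Ref{ci} is a quasi\+-constant, so if $d_i=\on{ord}_0 r_i$, then $q^{2(\la_N-\la_i+d_i)}=1$. The dominance\+-free hypothesis forces $d_i\le 0$, so $x^{\la_i-\la_N-d_i}$ is a quasi\+-constant with nonnegative integer part, and $\hat u_i=x^{\la_i-\la_N-d_i}u_N$ is a quasi\+-polynomial of type $\la_i$; when $q$ is a root of unity one instead picks $\ell\ge d_i$ with $q^{2\ell}=1$ and uses $\hat u_i=x^{\la_i-\la_N+\ell-d_i}u_N$. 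This pinpoints precisely how each hypothesis enters; by contrast, your discussion of ``ordering the factors $\phi_i$ of $\DU$ so that $\phi_i(\infty)$ matches $q^{2\la_i}$'' is a heuristic that does not engage the actual obstruction (the factorization in~\Ref{DUv} comes in a fixed order determined by the collection, and factorizations of difference operators are not unique). To make your approach rigorous you would need to carry the type data through the whole induction of Proposition~\ref{DUreg}, at which point you would be reproducing the paper's argument rather than shortcutting it.
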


\begin{example}
Let \>$\Ue=(u_1,u_2)$ be the collection of quasi\+-polynomials of type
$\lab=(1,0)$:
\vvn.1>
\be
u_1(x)\>=\>x\,,\qquad
u_2(x)\>=\>\frac1{1-q^{-2}}+\frac{q^2\+x\+\log x}{2\>\log\+q}\;.
\vv-.3>
\ee
Then
\vvn-.4>
\be
W_2[\>u_1,u_2\+](x)\,=\,x-x^2\>,\qquad
\DU\>=\,\tau^2\<-\Bigl(q^{-2}\?+\frac{1-x\+q^{-2}}{1-x}\+\Bigr)\,\tau\++\+
q^{-2}\>\frac{1-x\+q^{-2}}{1-x}\;,
\ee
and \>$\Ute=(u_2,u_1)$ is a regular collection of quasi\+-polynomials such that
\vvn.08>
\>$\DUt=\DU$. However if \>$q$ is not a root of unity, \>$\DU$ is not the
fundamental difference operator of any regular collection of quasi\+-polynomials
of type $\lab=(1,0)$. If \>$q^{\>2\+\ell}\?=1$ for a positive integer $\ell$,
set \>$u'_2(x)=x^{\+\ell}\+u_2(x)$. Then \>$\Ute'\?=(u'_2,u_1)$ is a regular
collection of quasi\+-polynomials of type $\lab=(1,0)$ such that \>$\DUtp\<=\DU$.
\end{example}

Say that $\lab=(\la_1\lc\la_N)$ is {\it generic\/} if
\>$q^{\>2(\la_i-\la_j)}\<\ne q^{\>2s}$ for all \>$1\le i<j\le N$
and \>$s\in\Z$.

\begin{prop}
\label{DUgen}
Let \,$\Ue=(u_1\lc u_N)$ be a collection of quasi\+-polynomials of type $\lab$.
such that the operator \,$\DU$ has rational coefficients. Assume that $\lab$
is generic. Then the quasi\+-polynomials \,$u_1\lc u_N$ are \>{\rm log}\<-free.
In particular, the collection \,$\Ue$ is regular.
\end{prop}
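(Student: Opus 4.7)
The plan is to use Proposition \ref{DUregla} to obtain a regular collection $\Ute=(\ut_1,\ldots,\ut_N)$ of the same type $\lab$ with $\DUt=\DU$, which is possible since genericity of $\lab$ implies dominance-freeness. By Lemma \ref{Duf}, each $u_i$ expands as $u_i=\sum_{j=1}^N c_{ij}\,\ut_j$ with quasi-constants $c_{ij}$, and the Wronskian formula \Ref{ci} identifies each $c_{ij}$ as a rational-in-$x$, polynomial-in-$\log x$ function times $x^{\la_i-\la_j}$.

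The technical workhorse is the following algebraic observation: \emph{if $g(x)$ is rational in $x$ and polynomial in $\log x$ with $\tau g=c\,g$ for a constant $c\notin q^{-2\Z}$, then $g\equiv 0$.} Indeed, the leading $(\log x)^K$-coefficient $r_K$ would be a rational $\tau$-eigenfunction with eigenvalue $c$, and tracking the orbits of its roots and poles under $x\mapsto xq^{-2}$ forces $r_K=\alpha\,x^m$ for some integer $m$, hence $c=q^{-2m}\in q^{-2\Z}$, contradiction. Apply this to $g(x)=x^{\la_j-\la_i}c_{ij}(x)$, which satisfies $\tau g=q^{2(\la_i-\la_j)}g$: for $i\neq j$ genericity gives $q^{2(\la_i-\la_j)}\notin q^{-2\Z}$, so $c_{ij}=0$ and $u_i=c_{ii}\,\ut_i$.

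The diagonal coefficient $c_{ii}$ is log-free by a variant of the same argument. Writing $c_{ii}(x)=\sum_k r_k(x)(\log x)^k$ with leading $r_K\neq 0$, $K\geq 1$, the relation $\tau c_{ii}=c_{ii}$ implies $\tau r_K=r_K$ and $(1-\tau)r_{K-1}=-2K\log q\cdot r_K$; but a nonzero $\tau$-invariant rational function cannot lie in $(1-\tau)(\C(x))$---for generic $q$ by a constant-term comparison, and for $q$ a root of unity of order $\ell$ by iterating $\ell$ times to get $0=(1-\tau^\ell)r_{K-1}=-2K\ell\log q\cdot r_K$. Hence $K=0$ and $c_{ii}$ is log-free.

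The same ``top-$(\log x)$-coefficient lies in $\ker\DU$'' idea, applied to each $\ut_i$ in place of $u_i$, shows every $\ut_i$ is log-free. Since $\DU$ preserves the $\log x$-degree filtration (its coefficients being rational), the leading $(\log x)^{K_i}$-coefficient $\ut_i^\star$ of $\ut_i$ (times $x^{\la_i}$) is a nonzero log-free quasi-polynomial in $\ker\DU$; expanding $\ut_i^\star=\sum_j h_{ij}\ut_j$ and running the same genericity and log-freeness arguments gives $\ut_i^\star=h_{ii}\ut_i$ with $h_{ii}$ log-free, which forces $K_i=0$. Consequently $u_i=c_{ii}\ut_i$ is log-free for every $i$, all Wronskians $W_k[u_{i_1},\ldots,u_{i_k}]$ are log-free as determinants of log-free quasi-polynomials, and the $y_j$'s attached to the frame of $\Ue$ (which exists by Proposition \ref{semi} and Lemma \ref{lemframe}) are log-free, proving regularity of $\Ue$. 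The main technical obstacle is the image-of-$(1-\tau)$ analysis on $\C(x)$, especially at roots of unity, which underlies both the log-freeness of $c_{ii}$ and of $\ut_i$.
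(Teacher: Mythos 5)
Your proof is correct, but it takes a longer route than the paper's. The paper directly sets \,$\ut_i=\bra\+u_i\ket$, the top $(\log x)$-part of $u_i$; these are log-free by construction, lie in $\ker\DU$ by Lemma~\ref{top}, so by Lemma~\ref{DU} the collection $\Ute$ has the same fundamental operator, and the proof of Proposition~\ref{DUtgen} then gives $\ut_i=r_{ii}(x)\>u_i$ with $r_{ii}$ rational, whence each $u_i$ is log-free. You instead first invoke Proposition~\ref{DUregla} (via generic~$\Rightarrow$~dominance-free) to get a regular $\Ute$ and expand $u_i=c_{ii}\+\ut_i$; but regularity of $\Ute$ only makes the nested Wronskians $W_{N-i}[\ut_{i+1}\lc\ut_N]$ log-free, so it does not yield log-freeness of the individual $\ut_1\lc\ut_{N-1}$, and your final step (applying the top-part trick to $\ut_i^\star=\bra\ut_i\ket$, showing $\ut_i^\star=h_{ii}\ut_i$, and concluding $K_i=0$) is in substance the entire paper's proof, run on $\Ute$ instead of $\Ue$. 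The DUregla detour is therefore logically redundant: you could have applied the top-part argument directly to the $u_i$. One mild compensation is that by expanding in the basis $\Ute$, which Proposition~\ref{DUregla} hands you as a genuine collection, you sidestep having to check that $(\bra u_1\ket\lc\bra u_N\ket)$ has nonzero Wronskian, a point the paper asserts without comment. Your ``technical workhorse'' on rational $\tau$-eigenfunctions is correct, including the root-of-unity case, and amounts to Lemma~\ref{qua} combined with the order-at-zero argument already used in the proofs of Propositions~\ref{DUreg} and~\ref{DUtgen}.
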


\begin{prop}
\label{DUtgen}
Let \,$\Ue=(u_1\lc u_N)$ and \,$\Ute=(\ut_1\lc\ut_N)$ be collections of
quasi\+-polynomials of type $\lab$ such that \,$\DU=\DUt\<$ and the operator
\,$\DU$ has rational coefficients. Assume that $\lab$ is generic.
Then there are quasi\+-constants \,$c_1\lc c_N\?$ such that \,$\ut_i=c_i\>u_i$,
\,$i=1\lc N\?$. If \,$q$ is not a root of unity, then the quasi\+-constants
\,$c_1\lc c_N\?$ are constants.
\end{prop}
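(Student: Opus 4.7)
The plan is to combine Lemma~\ref{Duf} with Proposition~\ref{DUgen} so that the coefficients of the basis change from $\Ue$ to $\Ute$ become rational functions, and then to use the generic hypothesis to kill the off\+-diagonal coefficients via a leading\+-term argument.

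By Proposition~\ref{DUgen} applied to both $\Ue$ and $\Ute$, each $u_j$ and each $\ut_i$ is log\+-free, so we write $u_j(x)=x^{\la_j}\+p_j(x)$ and $\ut_i(x)=x^{\la_i}\+\pt_i(x)$ with $p_j,\pt_i$ ordinary polynomials. Since $\DUt=\DU$, Lemma~\ref{Duf} provides quasi\+-constants $c_{ij}$ with $\ut_i=\sum_{j=1}^N c_{ij}\>u_j$; dividing by $x^{\la_i}$ yields $\pt_i=\sum_j d_{ij}\>p_j$, where $d_{ij}(x):=c_{ij}(x)\>x^{\la_j-\la_i}$ satisfies $\tau d_{ij}=\mu_{ij}\+d_{ij}$ with $\mu_{ij}:=q^{\>2(\la_i-\la_j)}$. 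Formula~\Ref{ci} from the proof of Lemma~\ref{Duf} expresses $c_{ij}$ as a ratio of Wronskians of log\+-free quasi\+-polynomials; a row\+-by\+-row factorization shows that any such Wronskian equals $x^{(\text{sum of types})}$ times an honest polynomial, and the numerator and denominator here differ in this $x$\+-exponent by exactly $\la_i-\la_j$, so $d_{ij}$ is an honest rational function of $x$.

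The vanishing of $d_{ij}$ for $j\ne i$ rests on a Liouville\+-type fact: if $R\ne 0$ is a rational function with $R(x\+q^{-2})=\mu\>R(x)$, then writing $R=P/Q$ with coprime polynomials $P,Q$ and comparing leading coefficients in $P(x\+q^{-2})\+Q(x)=\mu\>P(x)\+Q(x\+q^{-2})$ gives $\mu=q^{\>2(\deg Q-\deg P)}\in q^{\>2\Z}$. For $j\ne i$ the generic hypothesis is precisely that $\mu_{ij}\notin q^{\>2\Z}$, so $d_{ij}\equiv 0$ and hence $c_{ij}=0$. The expansion reduces to $\ut_i=c_i\>u_i$ with $c_i:=c_{ii}$, which is a quasi\+-constant by Lemma~\ref{Duf}.

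For the last assertion, $c_i=\pt_i/p_i$ is a rational quasi\+-constant. When $q$ is not a root of unity the multiplicative orbits $\{x\+q^{\>2k}:k\in\Z\}$ are all infinite, so $c_i$ can have no zeros or poles away from $\{0,\infty\}$; writing $c_i(x)=a\>x^{\+k}$, the quasi\+-constant relation $a\>q^{-2k}=a$ forces $k=0$, and $c_i=a$ is a plain constant. The main obstacle is the double bookkeeping: one must first extract the rationality of $d_{ij}$ from the implicit Wronskian formula~\Ref{ci} of Lemma~\ref{Duf}, and then translate the generic condition $q^{\>2(\la_i-\la_j)}\ne q^{\>2s}$ into the degree relation whose failure produces $d_{ij}=0$; once these two ingredients are in place the proof is essentially the Liouville argument above.
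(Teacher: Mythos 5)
Your overall strategy is the same as the paper's: expand $\ut_i=\sum_j c_{ij}\,u_j$ with quasi\+-constant coefficients via Lemma~\ref{Duf}, identify $c_{ij}$ as $x^{\la_i-\la_j}$ times a rational function, and then use the generic hypothesis and a leading\+-coefficient argument to force $c_{ij}=0$ for $i\ne j$. The Liouville\+-type step (comparing leading coefficients to conclude that a nonzero rational solution of $R(x\+q^{-2})=\mu\>R(x)$ forces $\mu\in q^{\>2\Z}$) is correct and is a clean way to state what the paper carries out by looking at the order of $r_{ij}$ at the origin, and your treatment of the last assertion when $q$ is not a root of unity is fine.

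However, there is a circularity in the very first step. You invoke Proposition~\ref{DUgen} to get that each $u_j$ and each $\ut_i$ is log\+-free, and you then use that log\+-freeness to carry out the column\+-by\+-column factorization of the Wronskians in \Ref{ci} and conclude that $d_{ij}$ is rational. But the paper proves Proposition~\ref{DUgen} by applying the proof of Proposition~\ref{DUtgen} to the collection $(\bra u_1\ket\lc\bra u_N\ket)$; so your proof of Proposition~\ref{DUtgen} quotes a proposition whose own proof already relies on the present one. The fix is small and is the route the paper takes: do not ask the individual $u_j,\ut_i$ to be log\+-free. Instead apply Proposition~\ref{semi} to $\Ue$ and, whenever the numerator Wronskian in \Ref{ci} is nonzero, to the modified collection $(u_1\lc u_{j-1},\ut_i,u_{j+1}\lc u_N)$ (which has the same fundamental operator $\DU$ by Lemma~\ref{DU}, since $\DU\>\ut_i=0$). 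This shows directly that the $N{\times}N$ Wronskians appearing in \Ref{ci} are log\+-free quasi\+-polynomials, and since a log\+-free quasi\+-polynomial of type $\al$ is $x^{\al}$ times an ordinary polynomial, you obtain $c_{ij}(x)=x^{\la_i-\la_j}\>r_{ij}(x)$ with $r_{ij}$ rational without any appeal to Proposition~\ref{DUgen}. After this replacement, the rest of your argument goes through unchanged.
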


\subsection{Difference operator of a solution of Bethe ansatz equations}
\label{s-diffop}
Fix collections of complex numbers $\lab=(\la_1\lc\la_N)$, nonnegative integers
\,$\lb=(l_1\lc l_{N-1})$ and monic polynomials \>$\Tb=(T_1\lc T_{N-1})$.

\vsk.2>
Given \>$\tb\in\Cl\<$, define the polynomials \>$p_1(x)\lc p_{N-1}(x)$
by the rule
\beq
\label{ptb}
p_i(x)\,=\,\prod_{j=1}^{l_i}\,(x-t_j^{(i)})\,,\qquad i=1\lc N-1\,,\kern-2em
\vv.2>
\eeq
cf.~\Ref{px}, and set \>$p_0(x)=p_N(x)=1$. Let
\vvn.2>
\beq
\label{Rt}
R_i(x)\,=\,\frac{p_{i-1}(x)}{p_i(x)}\;
\prod_{j=i}^{N-1}\,T_j(x\+q^{\>2(i-j)})\,,\qquad i=1\lc N\>,\kern-2em
\vv.2>
\eeq
Define the {\it fundamental difference operator\/} $D^\ts\?$ of the point
\,$\tb$ by the rule
\vvn.3>
\beq
\label{DtR}
D^\ts\>=\,\Bigl(\+\tau\>-\>q^{-2\la_1}\,\frac{R_1(x\+q^{-2})}
{R_1(x)}\,\Bigr)\,\ldots\,\Bigl(\+\tau\>-\>q^{-2\la_N}\,
\frac{R_N(x\+q^{-2})}{R_N(x)}\,\Bigr)\,.
\eeq

\begin{thm}
\label{DUt}
Let \,$\tb\in\Cl\<$ be a solution of equations \>\Ref{Be} associated with
the data \,$\lb\+,\+\Tb\<,\+\lab$. Let \,$\Ue$ \>be a collection of
quasi\+-polynomials of type $\lab$ such that \,$\Te=(T_1\lc T_{N-1},\>1)$ is
a preframe of \;$\Ue$ and the \$\Sl$-orbit of \;$\tb$ equals \>$X_{\Ue,\Te}$.
Then \,$D^\ts\<=\DU$.
\end{thm}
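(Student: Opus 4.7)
The plan is to show that $\DU$ and $D^\ts$ share the same first-order factorization. By Lemma~\ref{DUprod},
\[
\DU\,=\,\Bigl(\tau-\frac{\tau v_1}{v_1}\Bigr)\cdots\Bigl(\tau-\frac{\tau v_N}{v_N}\Bigr),
\]
with $v_N=u_N$ and $v_i=W_{N-i+1}[u_i,\dots,u_N]/W_{N-i}[u_{i+1},\dots,u_N]$ for $i<N$. Comparing with \Ref{DtR}, to prove $\DU=D^\ts\?$ it suffices to verify that
\[
\tau v_i/v_i\,=\,q^{-2\la_i}\,R_i(x\+q^{-2})/R_i(x)\,,\qquad i=1,\dots,N.
\]

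The preframe hypothesis $\Te=(T_1,\dots,T_{N-1},1)$ together with the equality $X_{\Ue,\Te}=\Sl\<\cdot\tb$ lets me use \Ref{defy} and \Ref{deft} to write, up to an overall nonzero constant,
\[
W_{N-i}[u_{i+1},\dots,u_N](x)\,=\,y_i(x)\,\QT_{N-i}(x),\qquad y_i(x)\,=\,c_i\,x^{\la_{i+1}+\cdots+\la_N}p_i(x),
\]
with $p_i(x)$ as in \Ref{ptb} and $c_i\in\C^\times$; the conventions $p_0=p_N=1$, $y_N=1$, $\QT_0=1$ extend the formula $v_i=y_{i-1}\,\QT_{N-i+1}/(y_i\,\QT_{N-i})$ to $i=N$ since $\QT_1=T_N=1$. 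A direct computation using the identity $y_j(x\+q^{-2})/y_j(x)=q^{-2(\la_{j+1}+\cdots+\la_N)}\,p_j(x\+q^{-2})/p_j(x)$ then yields
\[
\frac{\tau v_i(x)}{v_i(x)}\,=\,q^{-2\la_i}\cdot\frac{p_{i-1}(x\+q^{-2})\,p_i(x)}{p_{i-1}(x)\,p_i(x\+q^{-2})}\cdot\frac{\QT_{N-i+1}(x\+q^{-2})\,\QT_{N-i}(x)}{\QT_{N-i+1}(x)\,\QT_{N-i}(x\+q^{-2})}.
\]
Since the $p$-ratio already matches the $p$-part of $R_i(x\+q^{-2})/R_i(x)$ by \Ref{Rt}, the theorem reduces to the identity
\[
\frac{\QT_{N-i+1}(x\+q^{-2})\,\QT_{N-i}(x)}{\QT_{N-i+1}(x)\,\QT_{N-i}(x\+q^{-2})}\,=\,\prod_{j=i}^{N-1}\,\frac{T_j(x\+q^{-2(j-i+1)})}{T_j(x\+q^{-2(j-i)})}.
\]

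The main (and essentially only nontrivial) step is establishing this last $\QT$-identity, which I would verify by a direct telescoping from \Ref{QT}. Shifting $x\mapsto x\+q^{-2}$ in $\QT_k(x)=\prod_{a=1}^k\prod_{b=0}^{k-a}T_{N-a+1}(x\+q^{-2b})$ translates each inner index $b$ by one, so for fixed outer index $a$ all intermediate factors cancel and only two boundary ones remain; using $T_N=1$ to kill the $a=1$ row gives $\QT_k(x\+q^{-2})/\QT_k(x)=\prod_{m=N-k+1}^{N-1}T_m(x\+q^{-2(k+m-N)})/T_m(x)$. Specializing to $k=N-i+1$ and $k=N-i$ and taking the quotient, all $T_m$ with $m>i$ cancel between numerator and denominator and one recovers the right-hand side exactly. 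The case $i=N$ is trivial separately since $v_N=u_N=y_{N-1}$ and $R_N=p_{N-1}$; everything outside the $\QT$-identity is bookkeeping. Note that the hypothesis that $\tb$ solves \Ref{Be} is not needed directly --- it is encoded once and for all in the existence of the preframe $\Te$ via the discussion around \Ref{defy}.
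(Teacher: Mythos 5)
Your proof is correct and follows essentially the same route as the paper: both invoke Lemma~\ref{DUprod} to factor $\DU$ into first-order pieces and compare them with the factors in~\Ref{DtR}, using the preframe relation~\Ref{defy} to express $v_i$ in terms of $y_{i-1},y_i$ and $\QT_{N-i+1},\QT_{N-i}$. The paper identifies $v_i$ directly with $x^{\la_i}R_i(x)$ (up to the irrelevant constants $c_{i-1}/c_i$), whereas you match the ratios $\tau v_i/v_i$ via the telescoping $\QT$-identity, but this is the same argument in a slightly more spelled-out form.
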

\begin{proof}
Let \>$\Ue=(u_1\lc u_N)$. Since the \$\Sl$-orbit of \;$\tb$ equals
\>$X_{\Ue,\Te}$, formulae \Ref{QT}, \Ref{defy} imply
\vvn-.1>
\be
x^{\+\la_i}R_i(x)\,=\,\frac{W_{N\<-\+i+1\>}[\>u_i\lc u_N\+](x)}
{W_{N\<-\+i\>}[\>u_{i+1}\lc u_N\+](x)}\;,
\vv.5>
\ee
Thus \>$D^\ts\<=\DU$, see formulae \Ref{vi}, \Ref{DUv}, \Ref{DtR}.
\end{proof}

Recall that a point \>$\tb\in\Cl\<$ is admissible if it satisfies conditions
\Ref{cond1}.

\begin{cor}
\label{DtU}
Let \,$\tb\in\Cl\<$ be an admissible solution of equations \>\Ref{Be}
associated with the data \,$\lb\+,\+\Tb\<,\+\lab$. Then there exists
a collection of quasi\+-polynomials \,$\Ue=(u_1\lc u_N)$ of type \>$\lab$
such that \,$D^\ts\<=D_{\+\Ue}$ and
\vvn-.3>
\beq
\label{WNT}
W_{N\>}[\>u_1\lc u_N\+](x)\,=\,x^{\+\la_1\lsym+\>\la_N}\,
\prod_{i=1}^N\,\prod_{j=0}^{N\?-i}\,T_{N\<-\+i\++1}(x\+q^{-2j})\,.
\eeq
\end{cor}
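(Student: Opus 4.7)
The proof will combine the factored form \Ref{DtR} with the machinery developed earlier. First, I observe that $D^\ts$ is a monic difference operator of order $N$ with rational coefficients: each first-order factor in \Ref{DtR} has coefficient $q^{-2\la_i}R_i(x\+q^{-2})/R_i(x)$, which is rational by the definition \Ref{Rt} of $R_i$.

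Setting $v_i(x)=x^{\la_i}R_i(x)$, a one-line computation gives $\tau v_i/v_i=q^{-2\la_i}R_i(x\+q^{-2})/R_i(x)$, so the factored form \Ref{DtR} matches the canonical factorization of Lemma \ref{DUprod} term by term. The task therefore reduces to constructing a collection $\Ue=(u_1\lc u_N)$ of quasi-polynomials of type $\lab$ for which these particular $v_i$ arise as the Wronskian ratios \Ref{vi}; then Lemma \ref{DUprod} immediately gives $D^\ts=\DU$.

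My plan is to build $\Ue$ by downward recursion on $i$. Set $u_N=v_N=x^{\la_N}p_{N-1}(x)$, which is a log-free quasi-polynomial of type $\la_N$ and is admissible in the sense of Section \ref{s-N2} thanks to \Ref{cond1}. For the inductive step, given $u_{i+1}\lc u_N$ with the desired Wronskian ratios, one solves a first-order inhomogeneous difference equation for $u_i$ using the operators $\Fe$ and $\Ie$ introduced in Section \ref{s-N2}: admissibility of $\tb$ is exactly the hypothesis under which $\Fe$ may be applied to $y_i$, and the Bethe equations \Ref{Be} furnish the divisibility that forces the output to be a quasi-polynomial of the correct type $\la_i$ rather than of a more general form. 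When $\tb$ is also regular, this direct construction can be bypassed entirely: Theorem \ref{main} supplies $\Ue$ with preframe $(T_1\lc T_{N-1},\>1)$ and $X_{\Ue,\Te}=\Sl\cdot\tb$, and Theorem \ref{DUt} then gives $D^\ts=\DU$.

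The Wronskian formula \Ref{WNT} follows by multiplying the ratios: Lemma \ref{DUprod} yields $W_N[\>u_1\lc u_N\+](x)=\prod_{i=1}^N v_i(x)=x^{\la_1+\dots+\la_N}\prod_{i=1}^N R_i(x)$, and the $p$-ratios in $\prod_i R_i$ telescope (using $p_0=p_N=1$), leaving a double product of shifted $T_j$'s that re-indexes to the right-hand side of \Ref{WNT} (with $T_N=1$); any remaining scalar is absorbed by rescaling one of the $u_i$. The main obstacle is the inductive existence step: at each level one must verify that the quasi-polynomial produced is simultaneously of type $\la_i$ and compatible with the prescribed Wronskians with $u_{i+1}\lc u_N$, which is where the admissibility condition \Ref{cond1} is used essentially, mirroring the role played by regularity via Proposition \ref{reg} in the construction of Section \ref{s-Ue}.
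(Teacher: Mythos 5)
Your proposal ultimately lands on the same two results the paper uses, namely Theorem~\ref{main} to produce $\Ue$ with preframe $(T_1\lc T_{N-1},1)$ and $X_{\Ue,\Te}=\Sl\cdot\tb$, and Theorem~\ref{DUt} to deduce $D^\ts=\DU$; your telescoping computation for \Ref{WNT} is also correct and matches what \Ref{WN} gives. However, the first half of your argument contains a genuine error. You assert that a direct downward recursion via $\Fe$ and $\Ie$ works for an arbitrary admissible solution $\tb$, with ``the Bethe equations \Ref{Be} furnish[ing] the divisibility'' needed at each step. That attribution is wrong. The Bethe equations \Ref{Bep} only say that each $t^{(i)}_j$ is a root of $P_i(x;\tb)$; they do \emph{not} imply that $p_i(x;\tb)$ divides $P_i(x;\tb)$ when some coordinates coincide. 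By Proposition~\ref{reg}, that divisibility is precisely the definition of \emph{regularity}, and the divisibility statement you need in the recursion (Lemma~\ref{div}) is explicitly noted in the paper to be ``equivalent to Proposition~\ref{reg}.'' So your proposed recursion does not bypass regularity at all; the divisibility hypothesis for Proposition~\ref{Vy} fails without it, and the $\Fe$-construction breaks down. In short, there is no working ``general admissible'' branch distinct from the regular one --- you should simply invoke Theorem~\ref{main} and Theorem~\ref{DUt} (implicitly treating the corollary's hypothesis as ``admissible regular'', as the paper does in Section~\ref{s-Ue}), which is exactly the paper's proof.
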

\begin{proof}
\label{degU}
By Theorem \ref{main}, the collection of quasi\+-polynomials \,$\Ue=(u_1\lc u_N)$
constructed in Section \ref{s-Ue} has a preframe \,$\Te=(T_1\lc T_{N-1},\>1)$
and the \$\Sl$-orbit of \;$\tb$ equals \>$X_{\Ue,\Te}$. Hence, formula
\Ref{WNT} holds, cf.~\Ref{WN}, and \>$D^\ts\<=D_{\+\Ue}$ by Theorem \ref{DUt}.
\end{proof}

\begin{thm}
\label{UT}
Let \,$\tb\in\Cl\<$ be an admissible solution of equations \>\Ref{Be}
associated with the data \,$\lb\+,\+\Tb\<,\+\lab$. Let \,$\Ue$ be a collection
of quasi\+-polynomials of type \>$\lab$ such that \,$D^\ts\<=D_{\+\Ue}$.
Assume that \,$q$ is not a root of unity and \>$\lab$ is generic. Then
\,$\Te=(T_1\lc T_{N-1},1)$ is a preframe of \,$\Ue$ and the \$\Sl$-orbit
of \;$\tb$ equals \>$X_{\Ue,\Te}$.
\end{thm}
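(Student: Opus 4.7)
The plan is to reduce Theorem \ref{UT} to the uniqueness-up-to-constants result of Proposition \ref{DUtgen}, using the construction of Section \ref{s-Ue} to produce a reference collection with all the desired structural properties. First I would apply Theorem \ref{main} to the admissible regular solution $\tb$ to obtain a regular collection of quasi\+-polynomials $\Ue'=(u'_1\lc u'_N)$ of type $\lab$ such that $\Te=(T_1\lc T_{N-1},1)$ is a preframe of $\Ue'$ and the $\Sl$-orbit of $\tb$ equals $X_{\Ue',\Te}$. Theorem \ref{DUt} then gives $D_{\Ue'}=D^\ts$.

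Next I would compare $\Ue$ with the reference collection $\Ue'$. By hypothesis $\DU=D^\ts=D_{\Ue'}$, and the operator $D^\ts$ has rational coefficients by the explicit factored form \Ref{DtR}. Since $q$ is not a root of unity and $\lab$ is generic, Proposition \ref{DUtgen} applies to the pair $(\Ue,\Ue')$ and produces nonzero complex numbers $c_1\lc c_N$ such that $u_i=c_i\,u'_i$ for every $i=1\lc N$.

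The last step is a direct transfer of structure. Multiplying each $u'_i$ by a nonzero constant $c_i$ multiplies every Wronskian $W_k[\>u_{i_1}\lc u_{i_k}]$ by $c_{i_1}\!\ldots c_{i_k}$, so the divisibility conditions in the definition of a preframe are preserved and identity \Ref{WN} continues to hold up to an overall nonzero constant factor. Consequently $\Te$ is a preframe of $\Ue$. The auxiliary quasi\+-polynomials $y_i$ of \Ref{defy} computed from $\Ue$ differ from those computed from $\Ue'$ only by nonzero scalars, so they remain log\+-free with the same root sets; hence $\Ue$ is regular and $X_{\Ue,\Te}=X_{\Ue',\Te}$ equals the $\Sl$-orbit of $\tb$.

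I do not expect a significant obstacle in this argument, since all of the real work has already been done in Section \ref{s-Ue} and in the proof of Proposition \ref{DUtgen}; the content of Theorem \ref{UT} is essentially that these two results fit together. The only subtlety is that the assumptions that $\lab$ be generic and $q$ not be a root of unity are used at exactly one point, namely to guarantee that the proportionality coefficients produced by Proposition \ref{DUtgen} are honest constants rather than merely quasi\+-constants, which is what makes the Wronskian-based preframe conditions transfer verbatim from $\Ue'$ to $\Ue$.
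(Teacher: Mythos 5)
Your proposal is correct and follows essentially the same route as the paper: construct the reference collection via Section \ref{s-Ue}/Theorem \ref{main}, identify its fundamental difference operator with $D^\ts$ by Theorem \ref{DUt}, and then invoke Proposition \ref{DUtgen} to conclude that $\Ue$ differs from the reference collection only by constant rescalings, which preserve the preframe conditions and the roots of the $y_i$. The paper compresses your final ``transfer of structure'' step into ``which proves the theorem,'' but the content is identical.
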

\begin{proof}
Let \>$\Ue=(u_1\lc u_N)$, and \>$\Ute=(\ut_1\lc\ut_N)$ be the collection
of quasi\+-polynomials of type \>$\lab$ constructed in the proof of
Theorem \ref{main}, see Section \ref{s-Ue}. By Theorem \ref{DUt}
\>$D_{\+\Ute}=D^\ts\<=\DU$. Then by Lemma \ref{DUtgen}, for any \>$i=1\lc N\?$,
the quasi\+-polynomials \>$u_i$ and \>$\ut_i$ are proportional, which proves
the theorem.
\end{proof}

Theorem \ref{UT} shows that for \>$q$ being not a root of unity, generic
\>$\lab$, and an admissible solution \>$\tb$ of the Bethe ansatz equations
\Ref{Be} solving the difference equation \>$D^\ts\<f=0$ allows one to produce
a collection of quasi\+-polynomials \>$\Ue$ of type \>$\lab$ such that the
\$\Sl$-orbit of \;$\tb$ equals \>$X_{\Ue,\Te}$. This is an alternative way
to the construction of such a collection of quasi\+-polynomials given in
Section \ref{s-Ue}. Moreover, Theorem \ref{UT} and Lemma \ref{DUtgen} imply
that such a collection of quasi\+-polynomials is unique up to rescaling of
individual quasi\+-polynomials.

\subsection{Proofs of Propositions \ref{semi}\>\>--\,\ref{DUtgen}}
\label{s-proofs}
Recall that \>$q\ne0,\pm\+1$. Most of the technicalities in this section are
related to the fact that \>$q$ can be a root of unity.

\begin{lem}
\label{qua}
Let \,$c(x)$ be a quasi\+-constant of the form $c(x)=x^{\>\al}\>r(x,\log\+x)$,
where \>$\al\in\C$ and \,$r(x,y)$ is a rational function in \,$x\<$ and
a polynomial in \,$y$. Then \,$r(x,y)$ does not depend on \,$y$.
\end{lem}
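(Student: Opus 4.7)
The plan is to write $r(x,y) = \sum_{k=0}^n r_k(x)\,y^k$ with $r_k \in \C(x)$ and $r_n \ne 0$, and show by induction on $n$ that $n = 0$. The key point is that $1, \log x, (\log x)^2, \ldots$ are linearly independent over $\C(x)$, so expanding the quasi-constant relation $c(x) = c(xq^{-2})$ as
\[
\sum_k r_k(x)\,(\log x)^k \,=\, q^{-2\alpha}\sum_k r_k(xq^{-2})\,(\log x - 2\log q)^k
\]
and matching coefficients of each power of $\log x$ decouples it into a system of rational function identities on the $r_k$.

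In particular, the coefficients of $(\log x)^n$ and $(\log x)^{n-1}$ yield
\[
r_n(xq^{-2}) = q^{2\alpha}\,r_n(x), \qquad
r_{n-1}(xq^{-2}) = q^{2\alpha}\bigl(r_{n-1}(x) + 2n\log q\cdot r_n(x)\bigr),
\]
which iterate, for every integer $k \ge 1$, to $r_n(xq^{-2k}) = q^{2k\alpha}\,r_n(x)$ and $r_{n-1}(xq^{-2k}) = q^{2k\alpha}(r_{n-1}(x) + 2kn\log q\cdot r_n(x))$. Assuming $n \ge 1$ and $r_n \ne 0$, I would then derive a contradiction by splitting into two cases according to whether $q$ is a root of unity.

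If $q^{2\ell} = 1$ for some $\ell \ge 1$, then $\tau^{\ell}$ is the identity on $\C(x)$. Setting $k = \ell$ in the first iterated relation gives $r_n = q^{2\ell\alpha}r_n$, which forces $q^{2\ell\alpha} = 1$; the second relation at $k = \ell$ then reduces to $0 = 2\ell n\log q\cdot r_n$, contradicting $n \ge 1$ and $\log q \ne 0$. If $q$ is not a root of unity, I would first analyze $r_n(xq^{-2}) = q^{2\alpha}r_n(x)$ by writing $r_n = P/Q$ in lowest terms: any nonzero zero or pole of $P$ or $Q$ generates an infinite orbit under $x \mapsto xq^{\pm 2}$, which is impossible, so $r_n(x) = c\,x^e$ with $q^{2(\alpha+e)} = 1$. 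Substituting $r_{n-1}(x) = x^e g(x)$ with $g \in \C(x)$ reduces the second relation to the scalar equation $g(xq^{-2}) - g(x) = 2n\log q\cdot c$, a nonzero constant. The same denominator analysis shows $g = P(x)/x^m$, and the resulting polynomial identity $q^{2m}P(xq^{-2}) - P(x) = 2n\log q\cdot c\cdot x^m$ is inconsistent on the coefficient of $x^m$ (which vanishes on the left but not on the right), giving the desired contradiction.

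The main obstacle is the root-of-unity case: the usual tactic of letting the orbit $\{xq^{-2k}\}$ escape to $0$ or $\infty$ is unavailable, but it is compensated precisely by the triviality of $\tau^{\ell}$ on $\C(x)$, which collapses the iterated functional equation into a finite algebraic identity and forces $r_n = 0$ immediately. Once $r_n = 0$ is established in both cases, the inductive hypothesis applied to the truncation of $r$ completes the proof.
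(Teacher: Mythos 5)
Your proof is correct, and it starts the same way as the paper's: expanding in powers of $\log x$, using linear independence of $1,\log x,(\log x)^2,\dots$ over $\C(x)$, and extracting the two coupled relations
$r_k(xq^{-2})=q^{2\alpha}r_k(x)$ and
$r_{k-1}(xq^{-2})=q^{2\alpha}\bigl(r_{k-1}(x)+2k\log q\cdot r_k(x)\bigr)$
for the top two coefficients. From there, though, the two arguments diverge. The paper finishes in one stroke by comparing Laurent expansions at $x=0$: letting $d_k$ be the order of $r_k$ at the origin, the first relation forces $q^{2(\alpha+d_k)}=1$, and this constraint annihilates the coefficient of $x^{d_k}$ in the Laurent expansion of the left side of the second relation while the right side has a nonzero $x^{d_k}$ coefficient — a contradiction that needs no distinction between roots of unity and non-roots. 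You instead split into two cases. When $q^{2\ell}=1$, you exploit the triviality of $\tau^\ell$ on $\C(x)$ to collapse the iterated relations into a finite algebraic identity forcing $r_k=0$ — a clean observation the paper does not need. When $q$ is not a root of unity, you argue that orbits $\{aq^{2m}\}$ are infinite, pin down $r_k$ as a monomial $cx^e$, reduce the second relation to $g(xq^{-2})-g(x)=$ nonzero constant, and kill it by a coefficient comparison after a second orbit argument on $g$. What the paper's approach buys is uniformity and brevity: a single local argument at $x=0$ handles every $q$. What your approach buys is self-containment and explicitness — you classify the solutions of $r_k(xq^{-2})=q^{2\alpha}r_k(x)$ when $q$ is not a root of unity, and you make the root-of-unity collapse visible rather than implicit — so it is arguably the friendlier proof to read, at the cost of a case split and a longer second case.
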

\begin{proof}
Let \>$r(x,y)=r_0(x)+r_1(x)\,y\lsym+r_k(x)\,y^k\<$ and \>$k\ge 1$.
The equality \>$c(x)=c(x\+q^{-2})$ imply that
\vvn-.1>
\beq
\label{rk}
r_k(x)=q^{-2\+\al}\>r_k(x\+q^{-2})\,,\qquad
r_{k-1}(x)-q^{-2\+\al}\>r_{k-1}(x\+q^{-2})\>=\>-2\>k\,r_k(x)\,\log\+q\,.
\vv.2>
\eeq
Denote by \>$d_i$ the order of \>$r_i(x)$ at $x=0$, that is,
\,$\lim\nolimits_{\>x\to0}\bigl(x^{-d_i}\>r_i(x)\bigr)\ne 0\+,\8$.
Then the first equality in \Ref{rk} gives \,$q^{\>2\+(\al+d_k)}\<=1$, which
makes impossible matching the orders of the left and right sides in the second
equality.
\end{proof}

For a quasi\+-polynomial \>$f(x)=x^{\>\al}\+\bigl(P_0(x)+P_1(x)\,\log\+x\lsym+
P_k(x)\>(\+\log\+x)^k\bigr)$, where \>$P_0\lc P_k$ are polynomials,
\>$P_k\ne0$, denote by \>$\bra\+f\+\ket(x)=x^{\>\al}\+P_k(x)$ the top part
of \>$f(x)$.

\begin{lem}
\label{top}
Let \,$D$ be a difference operator with rational coefficients.
If a quasi\+-polyno\-mial \>$f(x)$ satisfies the equation \>$Df=0$,
then \>$D\+\bra\+f\+\ket=0$ too.
\end{lem}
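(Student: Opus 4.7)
The plan is to exploit the fact that $\tau$ shifts $\log x$ by an additive constant, so the operation of taking the top part (the coefficient of the highest power of $\log x$) commutes with $\tau$ and hence with any difference operator.

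First I would write $f(x)=x^{\>\al}\sum_{i=0}^k P_i(x)\>(\log x)^i$ with $P_k\ne0$, and compute
\be
(\tau^{\+j}f)(x)\,=\,f(x\+q^{-2j})\,=\,q^{-2j\al}\,x^{\>\al}\,
\sum_{i=0}^k\>P_i(x\+q^{-2j})\>(\log x-2j\log q)^i\,.
\ee
Expanding $(\log x-2j\log q)^i$ is a polynomial in $\log x$ of degree exactly $i$, so the $(\log x)^k$ coefficient of $\tau^{\+j}f$ comes only from $i=k$ and equals $q^{-2j\al}\>x^{\>\al}P_k(x\+q^{-2j})=\tau^{\+j}\bra f\ket(x)$. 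In other words, $\bra\tau^{\+j}f\+\ket=\tau^{\+j}\bra f\ket$.

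Next, writing $D=\sum_j a_j(x)\>\tau^{\+j}$ with rational $a_j$, the function $Df$ can be written uniquely in the form $x^{\>\al}\>R(x,\log x)$, where $R(x,y)=R_0(x)+R_1(x)\+y\lsym+R_k(x)\+y^k$ with each $R_l$ a rational function in $x$. By the previous step, the top coefficient $R_k$ satisfies
\be
x^{\>\al}R_k(x)\,=\,\sum_j\>a_j(x)\>\tau^{\+j}\bra f\ket(x)\,=\,D\bra f\ket(x)\,.
\ee

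Finally, I would invoke the transcendence of $\log x$ over $\C(x)$: if $R(x,\log x)\equiv0$ as a function of $x$ on any interval avoiding zero, then $R(x,y)$ vanishes identically as a polynomial in $y$ with rational-function coefficients, forcing every $R_l\equiv0$. Applying this to the hypothesis $Df=0$ gives in particular $R_k\equiv0$, i.e.\ $D\bra f\ket=0$, which is the claim.

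There is essentially no obstacle here; the only point to handle with care is the last transcendence argument, which is standard but should be stated explicitly because the paper allows $q$ to be a root of unity and one must not accidentally use any analytic property of $q^{\>\al}$ that fails in that case. The computation is purely algebraic and relies only on $\tau^{\+j}\log x=\log x-2j\log q$ and the independence of $1,\log x,\ldots,(\log x)^k$ over $\C(x)$.
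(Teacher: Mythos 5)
Your proof is correct and follows essentially the same route as the paper: the paper's one-line argument is precisely the observation that $\tau$ maps $x^{\>\al}x^{\+i}(\log x)^{\+j}$ to $q^{-2(\al+i)}x^{\>\al}x^{\+i}$ times a \emph{monic} polynomial of degree $j$ in $\log x$, so extracting the top degree in $\log x$ commutes with $\tau$ and hence with any difference operator whose coefficients are free of $\log x$. Your version merely makes explicit the identity $\bra\+\tau^{\+j}f\+\ket=\tau^{\+j}\bra\+f\+\ket$ and the linear independence of $1,\log x,\ldots,(\log x)^k$ over $\C(x)$, both of which the paper takes as understood.
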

\begin{proof}
Since \>$\tau\+\bigl(x^{\>\al}\+x^{\+i}(\+\log\+x)^{\+j}\bigr)=
q^{-2\+(\al+i\+)}\>x^{\>\al}\+x^{\+i}\>r(\+\log\+x)$ for some monic polynomial
\>$r(s)$ of degree \>$j$, the claim follows.
\end{proof}

\begin{proof}[Proof of Proposition \ref{semi}]
Let
\>$c(x)=W_{N\>}[\>u_1\lc u_N\+](x)\big/\Bra\+W_{N\>}[\>u_1\lc u_N\+]\+\Ket(x)$.
By Lemma \ref{top}, \>$c(x)$ is a quasi\+-constant, and \>$c(x)$ satisfies
the assumption of Lemma \ref{qua}. Since the quasi\+-polynomial
\>$\Bra\+W_{N\>}[\>u_1\lc u_N\+]\+\Ket$ is \+log\+-free, Proposition \ref{semi}
follows.
\end{proof}

\begin{proof}[Proof of Propositions \ref{DUreg}, \ref{DUregla}]
We will prove the Propositions by induction on the number of quasi\+-polynomials
in the collection \>$\Ue=(u_1\lc u_N)$.

\vsk.2>
If the quasi\+-polynomial \>$u_N\<$ is not \+log\+-free, let \>$f=\bra\+u_N\ket$.
By Lemma \ref{top}, \>$\DU\>f=0$. If \>$W_{N\>}[\>u_1\lc u_{N-1},f\>]\ne0$, set
\vvn.08>
\>$\Uhe=(u_1\lc u_{N-1},f\+)$. Then the collection \>$\Uhe$ has type $\lab$,
\>$\DUh=\DU$, and the quasi\+-polynomial \>$f$ is \+log\+-free.

\vsk.2>
If \>$W_{N\>}[\>u_1\lc u_{N-1},f\>]=0$, then
\>$W_{N\>}[\>u_1\lc u_{i-1},f,u_{i+1}\lc u_N\+]\ne 0$ \>for some
\>$i=1\lc N-1$, see Lemma \ref{ci}. Then for the collection
\vvn.06>
\>$\Uhe=(u_1\lc u_{i-1},u_N,u_{i+1}\lc u_{N-1},f\+)$, \>$\DUh=\DU$,
and the quasi\+-polynomial \>$f$ is \+log\+-free.

\vsk.2>
The quasi\+-constants \>$c_i(x)$ given by \Ref{ci} have the form
\>$c_i(x)=x^{\+\la_N-\la_i}\+r_i(x)$ for some rational functions \>$r_i(x)$.
Let \>$d_i$ be the order of \>$r_i(x)$ at \>$x=0$. Since \>$c_i(x)$ is
a quasi\+-constant, \>$q^{\>2\+(\la_N-\la_i+\+d_i)}\?=1$. For a dominance\+-free
$\lab$, we have \>$d_i\le 0$, and we set \>$\ell=0$. If \>$q$ is a root
of unity, we find an integer \>$\ell\ge d_i$ such that \>$q^{\>2\+\ell}\?=1$.
Then \>$x^{\+\la_i-\la_N+\+\ell\+-\+d_i}\<$ is a quasi\+-constant, and
\>$\uh_i(x)=x^{\+\la_i-\la_N}\+x^{\+\ell\+-\+d_i}\+u_N(x)$ is
a quasi\+-polynomials of type $\la_i$. Thus
\>$\Uhe'\?=(u_1\lc u_{i-1},\uh_i,u_{i+1}\lc u_{N-1},f\+)$ is a collection of
quasi\+-polynomials of type $\lab$, \>$\DUhp\<=\DU$, and the quasi\+-polynomial
\>$f$ is \+log\+-free.

\vsk.2>
Assume now that the quasi\+-polynomial \>$u_N\<$ is \+log\+-free,
so \>$r(x)=u_N(x\+q^{-2})/u_N(x)$ is a rational function.
Let the functions \>$v_1\lc v_N$ be given by \Ref{vi}. Set
\vvn.4>
\be
D\,=\,\Bigl(\+\tau\>-\>\frac{\tau v_1}{v_1}\,\Bigr)\,\ldots\,
\Bigl(\+\tau\>-\>\frac{\tau v_{N-1}}{v_{N-1}}\,\Bigr)\,=\,
\tau^{\+N-1}\?+b_{N-2}\,\tau^{\+N-2}\?\lsym+b_0\,.
\vv.2>
\ee
By Lemma \ref{DUprod}, the operator \>$\DU$ factors,
\vvn.2>
\be
D\cdot\<\bigl(\+\tau-r(x)\bigr)\,=\,
\DU\,=\,\tau^{\+N}\!+a_{N-1}\,\tau^{\+N-1}\?\lsym+a_0\,,
\vv.1>
\ee
so the coefficients \>$b_0,\lc b_{N-2}$ are determined by the equations
\vvn.3>
\be
a_i(x)\,=\,b_{i-1}(x)-b_i(x)\,r(x\+q^{-2\+i})\,,\qquad i=0\lc N-1\,,
\kern-2em
\vv.2>
\ee
where \>$b_{-1}(x)=0$, and \>$b_{N-1}(x)=1$. Therefore,
\>$b_0(x),\lc b_{N-1}(x)$ are rational functions.

\vsk.2>
Let \>$u'_i=W_2[\>u_i,u_N\+]$, and \>$\Ue'\?=(u'_1\lc u'_{N-1})$.
\vvn.1>
By Lemmas \ref{DUprod} and \ref{DU}, the operator \>$\DUp$ acts as follows
\be
\DUp\>f(x)\,=\,\frac1{u_N(x\+q^{\>2\+(1-N)})}\;D\>\bigl(f(x)\+u_N(x)\bigr)\,,
\vv.3>
\ee
and hence, has rational coefficients. By the induction assumption, there is
a regular collection of quasi\+-polynomials \>$\Ue''\?=(u''_1\lc u''_{N-1})$ such
that \>$\DUpp\<=\DUp$. Set
\vvn.2>
\beq
\label{cij}
c_{ij}(x)\,=\,-\>
\frac{W_{N-1\>}[\>u'_1\lc u'_{j-1},u''_i,u'_{j+1}\lc u'_{N-1}\+](x)}
{W_{N-1\>}[\>u'_1\lc u'_{N-1}\+](x)}\;,\qquad i,j=1\lc N-1\,.\kern-2em
\vv.2>
\eeq
Similarly to the proofs of Proposition \ref{semi} and Lemma \ref{Duf},
one can show that
\vvn.3>
\beq
\label{u'}
u''_i(x)\,=\,c_{i,1}(x)\,u'_1(x)\lsym+c_{i,N-1}(x)\,u'_{N-1}(x)\,,\qquad
i=1\lc N-1\,,\kern-2em
\vv.3>
\eeq
the Wronskians in \Ref{cij} are \+log\+-free quasi\+-polynomials,
and \>$c_{ij}(x)$ are quasi\+-constants of the form
\>$c_{ij}(x)=x^{\>\al_{ij}}\+r_{ij}(x)$ for some rational functions
\>$r_{ij}(x)$. Let \>$d_{ij}$ be the order of \>$r_{ij}(x)$ at \>$x=0$.
Since \>$c_{ij}(x)$ is a quasi\+-constant, \>$q^{\>2\+(\al_{ij}+\+d_{ij})}\?=1$,
and the function \>$x^{-d_{ij}}\+r_{ij}(x)$ is also a quasi\+-constant.
Denote by \>$P(x)$ the least common denominator of the functions
\>$x^{-d_{ij}}\+r_{ij}(x)$, \>$i,j=1\lc N-1$. The polynomial \>$P(x)$
is a quasi\+-constant as well.

\vsk.2>
Define the collection of quasi\+-polynomials \>$\Ute=(\ut_1\lc\ut_N)$ by
the rule: \>$\ut_N=u_N$ and
\vvn.3>
\be
\ut_i(x)\,=\,P(x)\bigl(c_{i,1}(x)\,u_1(x)\lsym+c_{i,N-1}(x)\,u_{N-1}(x)\bigr)
\,,\qquad i=1\lc N-1\,.\kern-2em
\vv.3>
\ee
Then \>$W_2[\>\ut_i,\ut_N](x)=P(x)\,u''_i(x)$, \>$i=1\lc N-1$, see \Ref{u'}.
By Lemmas \ref{common}, \ref{Wid},
\vvn.3>
\be
W_{N-\+i\++1\>}[\>\ut_i\lc\ut_N\+](x)
\prod_{j=1}^{N-\+i\+-1}\?\ut_N(x\+q^{-2\+j})\,=\,
W_{N-\+i\>}[\>u''_i\lc u''_{N-1}\+](x)\;\prod_{k=0}^{N-i}\>P(x\+q^{-2\+k})\,,
\vv.2>
\ee
$i=1\lc N-2$, which implies that the quasi\+-polynomials
\>$W_{N-\+i\++1\>}[\>\ut_i\lc\ut_N\+]$ are \+log\+-free,
and \>$\Ute$ is a regular collection of quasi\+-polynomials.
Since \>$\DUt=\DU$ by Lemma \ref{DU}, Proposition \ref{DUreg} is proved.
\end{proof}

\begin{proof}[Proof of Proposition \ref{DUtgen}]
By Proposition \ref{semi}, the quasi\+-polynomial \>$W_{N\>}[\>u_1\lc u_N\+]$
is \+log\+-free. Since \,$\DU\>\ut_i\<=0$, the quasi\+-polynomials
\>$W_{N\>}[\>u_1\lc u_{j-1},\ut_i,u_{j+1}\lc u_N\+]$ are \+log\+-free as well.
Set
\vvn-.3>
\be
c_{ij}(x)\,=\,-\>
\frac{W_{N\>}[\>u_1\lc u_{j-1},\ut_i,u_{j+1}\lc u_N\+](x)}
{W_{N\>}[\>u_1\lc u_N\+](x)}\;,\qquad i,j=1\lc N\,.\kern-2em
\vv.3>
\ee
Similarly to the proof of Lemma \ref{Duf}, $c_{ij}(x)$ are quasi\+-constants
of the form \>$c_{ij}(x)=x^{\+\la_i-\la_j}\+r_{ij}(x)$ for some rational
functions \>$r_{ij}(x)$. Since $\lab$ is generic, that is,
\>$q^{\>2(\la_i-\la_j)}\<\ne q^{\>2s}$ for \>$i\ne j$ and any \>$s\in\Z$,
we get that \>$c_{ij}=0$ \>for \>$i\ne j$, and \>$\ut_i(x)=r_{ii}(x)\,u_i(x)$,
\>$i=1\lc N\?$, where \>$r_{ii}(x)$ are rational functions and quasi\+-constants.

\vsk.2>
If \>$q$ is not a root of unity, then the only rational functions that are
quasi\+-constants are constant functions.
\end{proof}

\begin{proof}[Proof of Proposition \ref{DUgen}]
Let \>$\ut_i=\bra\+u_i\ket$, \,$i=1\lc N\?$. Then \>$\Ute=(\ut_1\lc\ut_N)$
is a collection of quasi\+-polynomials of type $\lab$. By Lemmas \ref{top} and
\ref{DU}, \>$\DUt=\DU$. By the proof of Proposition \ref{DUtgen} there are
rational functions \>$r_{ii}(x)$ such that \>$\ut_i(x)=r_{ii}(x)\,u_i(x)$,
\>$i=1\lc N\?$. Since the quasi\+-polynomials \>$\ut_1\lc\ut_N$ are \+log\+-free,
the quasi\+-polynomials \>$u_1\lc u_N$ are \+log\+-free as well.
\end{proof}

\appendix

\section{The Wronskian identities}
\label{s-Wid}

\begin{lem}
\label{common}
Given functions \>$f_1(x)\lc f_k(x)$ and $g(x)$, we have
\vvn.2>
\be
W_k[\>gf_1\lc gf_k\+](x)\,=\,
W_k[\>f_1\lc f_k\+](x)\;\prod_{i=0}^{k-1}\,g(x\+q^{-2\+i})\,.
\ee
\end{lem}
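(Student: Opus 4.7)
The plan is to prove this by unfolding the definition of the discrete Wronskian \Ref{Wr} and factoring a common scalar from each column of the resulting determinant.

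First I would write
\be
W_k[\>gf_1\lc gf_k\+](x)\,=\,\det\bigl(g(x\+q^{-2(j-1)})\,f_i(x\+q^{-2(j-1)})\bigr)_{\ij=1}^k,
\ee
using the definition \Ref{Wr} applied to the functions $gf_1,\ldots,gf_k$. The key observation is that the factor $g(x\+q^{-2(j-1)})$ in the $(i,j)$-entry depends only on the column index $j$, not on the row index $i$. Hence by multilinearity of the determinant in its columns, this scalar can be pulled out of the $j$-th column.

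Pulling out the scalars from all $k$ columns yields
\be
W_k[\>gf_1\lc gf_k\+](x)\,=\,
\biggl(\prod_{j=1}^{k}\,g(x\+q^{-2(j-1)})\biggr)\;\det\bigl(f_i(x\+q^{-2(j-1)})\bigr)_{\ij=1}^k.
\ee
The remaining determinant is exactly $W_k[\>f_1\lc f_k\+](x)$ by definition, and reindexing $i=j-1$ in the product gives $\prod_{i=0}^{k-1}g(x\+q^{-2\+i})$, which is the claimed formula.

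There is essentially no obstacle here: the lemma is a one-line consequence of column-multilinearity of the determinant, and I do not expect to need any further ingredients beyond definition \Ref{Wr}. The statement is included mainly as a reusable technical identity to streamline the Wronskian manipulations in the main text (for instance in the derivation of \Ref{Wyy} and in Section \ref{s-Te}).
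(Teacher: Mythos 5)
Your proof is correct and is exactly the ``straightforward'' argument the paper has in mind (the paper gives no details, simply remarking that the proof is straightforward): expand the discrete Wronskian determinant and pull the common scalar $g(x\+q^{-2(j-1)})$ out of each column by multilinearity. Nothing further is needed.
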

\vsk-.3>
\nt
\vv.5>
The proof is straightforward.

\begin{lem}
\label{Wid}
Given functions \>$f_1(x)\lc f_j(x)$, $g_1(x)\lc g_k(x)$, \>let
\vvn.3>
\be
h_i(x)\,=\,W_{j+1}[\>g_i,f_1\lc f_j\+](x)\,,\qquad i=1\lc k\,.\kern-2em
\vv-.4>
\ee
Then
\vvn-.3>
\be
W_k[\>h_1\lc h_k\+](x)\,=\,W_{j+k}[\>g_1\lc g_k, f_1\lc f_j\+](x)\;
\prod_{l=1}^{k-1}\,W_j[\>f_1\lc f_j\+](x\+q^{-2\+l})\,.
\ee
\end{lem}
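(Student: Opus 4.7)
I would prove the identity by induction on $k$. The base case $k=1$ is immediate: $W_1[h_1](x)=h_1(x)=W_{j+1}[g_1,f_1\lc f_j](x)$, and the product on the right is empty, hence equals $1$.

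The key reformulation is matricial. Let $A(x)$ be the $(j+k)\x(j+k)$ matrix whose $(i,n)$\+-entry is $g_i(x\+q^{-2(n-1)})$ for $1\le i\le k$ and $f_{i-k}(x\+q^{-2(n-1)})$ for $k<i\le j+k$, where $1\le n\le j+k$. By definition, $\det A(x)=W_{j+k}[\>g_1\lc g_k,f_1\lc f_j\+](x)$. A direct unwinding of the discrete Wronskian \Ref{Wr} shows that for any $i=1\lc k$ and $l=1\lc k$, the value $h_i(x\+q^{-2(l-1)})$ is exactly the $(j+1)\x(j+1)$ minor of $A(x)$ formed by selecting row $i$ together with rows $k+1\lc k+j$, and columns $l,\+l+1\lc l+j$. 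Consequently $W_k[\>h_1\lc h_k\+](x)$ is the $k\x k$ determinant of these sliding bordered minors of $A(x)$.

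In this language the claim is precisely the generalized Sylvester (or Muir) determinant identity with the $j$ rows of $f$'s playing the role of the base and consecutive $(j+1)$-column windows being bordered by the $g$\+-rows. To keep the proof self\+-contained I would handle it by induction on $k$ using as the atomic step the Desnanot\+--Jacobi (Lewis Carroll) identity. For $k=2$, Desnanot\+--Jacobi applied to the $(j+2)\x(j+2)$ matrix $A(x)$, erasing the corner consisting of rows $\{1,2\}$ and columns $\{1,\+j+2\}$, yields
\be
W_2[\>h_1,h_2\+](x)\,=\,W_{j+2}[\>g_1,g_2,f_1\lc f_j\+](x)\,W_j[\>f_1\lc f_j\+](x\+q^{-2})\,,
\ee
which is the lemma for $k=2$. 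For $k\ge 3$, I would expand $W_k[\>h_1\lc h_k\+](x)$ as a Desnanot\+--Jacobi\+-type quadratic combination of $(k\<-1)\x(k\<-1)$ Wronskians of the form $W_{k-1}[\>h_{i_1}\lc h_{i_{k-1}}\+]$, apply the induction hypothesis to each of these, and recombine; the resulting expression collapses (again by Desnanot\+--Jacobi, now on the $(j+k)\x(j+k)$ matrix $A(x)$) to $W_{j+k}[\>g_1\lc g_k,f_1\lc f_j\+](x)$ multiplied by the predicted product of shifted $W_j$\+-factors.

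The main obstacle is the bookkeeping: the column windows $\{l,\+l+1\lc l+j\}$ slide with $l$, so the usual Sylvester identity (which requires a fixed base) does not apply directly, and one has to keep careful track of which shifts $q^{-2l}$ appear in the intermediate $W_j$\+-factors and how the signs combine when iterating Desnanot\+--Jacobi. An alternative route, should the direct iteration become unwieldy, is to invoke Lemma \ref{common} to reduce the number of $f$\+'s by one (factoring the common divisor out of a row) and run a nested induction on $j$ as well; but the $k$\+-induction with the Desnanot\+--Jacobi atomic step is the most economical plan.
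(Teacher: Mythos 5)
Your plan is correct, and the pivotal observation is exactly the right one: in the $(j+k)\x(j+k)$ matrix $A(x)$ with $\det A(x)=W_{j+k}[\>g_1\lc g_k,f_1\lc f_j\+](x)$, the value $h_i(x\+q^{-2(l-1)})$ is the $(j+1)\x(j+1)$ minor on rows $\{i\}\cup\{k+1\lc k+j\}$ and columns $\{l\lc l+j\}$, so $W_k[\>h_1\lc h_k\+]$ is a determinant of bordered minors of $A(x)$. Two small points are worth making explicit. First, the identity you need at each step is Jacobi's determinant theorem for an arbitrary $2\x2$ block of minors, not only the corner (Dodgson) specialization: for a square matrix $M$ and index pairs $i_1\<<i_2$, $j_1\<<j_2$, writing $M^{(i)}_{(j)}$ for the minor with row $i$ and column $j$ deleted, one has
\be
\det M\cdot M^{(i_1 i_2)}_{(j_1 j_2)}
=M^{(i_1)}_{(j_1)}M^{(i_2)}_{(j_2)}-M^{(i_1)}_{(j_2)}M^{(i_2)}_{(j_1)}\,.
\ee
With $M=A(x)$, $(i_1,i_2)=(1,k)$, $(j_1,j_2)=(1,j+k)$, this is precisely what your recombination collapses to after inserting the induction hypothesis on the four $(k\<-1)$-Wronskians and the central $(k\<-2)$-Wronskian; and with $(i_1,i_2)=(1,2)$, $(j_1,j_2)=(1,j+2)$ it is the $k=2$ base step. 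Second, the recombination divides by $W_{k-2}[\>h_2\lc h_{k-1}](x\+q^{-2})$ (equivalently, after using the induction hypothesis, by the minor $A^{(1,k)}_{(1,j+k)}$). Since both sides of the target identity are polynomial in the sampled values $g_i(x\+q^{-2m})$, $f_m(x\+q^{-2n})$, it is enough to prove the identity on the dense set where all intermediate Wronskians are nonzero; this should be stated. With these points made, the plan goes through. For comparison, the paper simply refers to Lemma~9.4 of \cite{MV2}, whose argument is of the same determinantal flavour; a somewhat more economical route that avoids the nested induction entirely is to let $D$ be the unique monic difference operator of order $j$ with $Df_m=0$, $m=1\lc j$, and then perform column operations on $A(x)$ (adding to each column $n>j$ a combination of the preceding $j$ columns so that the entry in the $f$-rows becomes zero and the entry in the $g_i$-row becomes $\tau^{\,n-j-1}Dg_i$). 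This puts $A(x)$ in block-triangular form and yields the identity in a single Laplace expansion, with the product of shifted $W_j[\>f_1\lc f_j\+]$ factors arising from the normalization of $D$; in effect this replaces your Desnanot--Jacobi induction by Lemma~\ref{DUprod} plus Lemma~\ref{common}.
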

\vsk-.3>
\nt
\vv.5>
The proof is similar to that of Lemma 9.4 in \cite{MV2}.

\begin{lem}
\label{Wid2}
Given functions $f_1(x)\lc f_k(x)$, \>we have
\vvn.4>
\be
\sum_{i=1}^k\,(-1)^i\,W_{k-1}[\>f_1\lc f_{i-1},f_{i+1}\lc f_k\+](x)\;
f_i(x\+q^{-2\+l})\,=\,0\,,\qquad l=0\lc k-2\,,
\ee
\be
\sum_{i=1}^k\,(-1)^{k-i}\,W_{k-1}[\>f_1\lc f_{i-1},f_{i+1}\lc f_k\+](x)\;
f_i(x\+q^{\>2-2\+k})\,=\,W_k[\>f_1\lc f_k\+](x)\,.
\ee
\end{lem}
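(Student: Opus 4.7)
The plan is to recognize both identities as two instances of a single cofactor (Laplace) expansion applied to an auxiliary $k\times k$ matrix.

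Fix $l$ and consider the $k\times k$ matrix $A(x,l)$ whose first $k-1$ columns are those of the Wronskian matrix
$\bigl(f_i(xq^{-2(j-1)})\bigr)_{i,j=1}^{k}$ (with $j=1,\dots,k-1$), and whose last column has entries $f_i(xq^{-2l})$ for $i=1,\dots,k$. Expanding $\det A(x,l)$ along the last column gives
\be
\det A(x,l)\,=\,\sum_{i=1}^k\,(-1)^{i+k}\,f_i(xq^{-2l})\,M_i(x),
\ee
where $M_i(x)$ is the $(i,k)$-minor obtained from $A(x,l)$ by deleting row $i$ and the last column. Since that minor is built from the functions $f_1,\dots,f_{i-1},f_{i+1},\dots,f_k$ evaluated at $x,xq^{-2},\dots,xq^{-2(k-2)}$, it equals exactly $W_{k-1}[f_1,\dots,f_{i-1},f_{i+1},\dots,f_k](x)$ and does not depend on $l$.

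Next I would specialize $l$. For $l\in\{0,1,\dots,k-2\}$, the last column of $A(x,l)$ coincides with its $(l+1)$-st column, so $\det A(x,l)=0$; this gives
\be
0\,=\,\sum_{i=1}^k\,(-1)^{i+k}\,f_i(xq^{-2l})\,W_{k-1}[f_1,\dots,f_{i-1},f_{i+1},\dots,f_k](x),
\ee
and multiplying by $(-1)^k$ yields the first identity. For $l=k-1$, one has $xq^{-2l}=xq^{\,2-2k}$ and $A(x,k-1)$ is the full Wronskian matrix, so $\det A(x,k-1)=W_k[f_1,\dots,f_k](x)$; using $(-1)^{i+k}=(-1)^{k-i}$ yields the second identity.

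There is essentially no obstacle beyond careful bookkeeping of signs and of which column of the Wronskian matrix is being replaced; the whole statement is a direct consequence of the definition \Ref{Wr} of the discrete Wronskian together with the Laplace expansion and the vanishing of a determinant with two equal columns.
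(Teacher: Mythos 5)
Your proof is correct and is essentially the same as the paper's: both construct the auxiliary $k\times k$ matrix whose first $k-1$ columns are the discrete Wronskian columns and whose last column is $\bigl(f_i(xq^{-2l})\bigr)_i$, then obtain the first identity from the vanishing of a determinant with a repeated column and the second from the case $l=k-1$, via cofactor expansion along the last column. (As a small aside, the paper's wording says ``two rows of $M$ are the same'' and ``expanding the determinant in the last row,'' which should read ``columns'' and ``last column''; your version states this correctly.)
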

\begin{proof}
Consider a \>$k\,{\x}\,k$ matrix \>$M\<$ with entries
\>$M_{ij}=f_i(x\+q^{-2(j-1)})$ for \>$j<k$ and \>$M_{\il}=f_i(x\+q^{-2\+l})\<$.
If \,$l=0\lc k-2$, two rows of \>$M\<$ are the same, hence \,$\det\>M=0$.
If \,$l=k-1$, then \,$\det\>M=W_k[\>f_1\lc f_k\+]$.
Expanding the determinant in the last row yields the claim.
\end{proof}

\begin{lem}
\label{Wid3}
Given functions $f_1(x)\lc f_k(x)$, \>let
\vvn.3>
\be
g_i(x)\,=\,W_{k-1}[\>f_1\lc f_{i-1},f_{i+1}\lc f_k\+](x)\,,\qquad i=1\lc k\,.
\kern-2em
\vv-.3>
\ee
Then
\vvn-.2>
\be
W_j[\>g_j\lc g_1\+](x)\,=\,W_{k-j}[\>f_{j+1}\lc f_k\+](x\+q^{\>2-2\+j})\;
\prod_{l=0}^{k-2}\,W_s[f_1\lc f_s](x\+q^{-2\+l})\,.
\ee
\end{lem}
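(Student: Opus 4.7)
My plan is induction on $j$, with the hypothesis carried uniformly in $k$. The base case $j=1$ is immediate, since $W_1[g_1]=g_1=W_{k-1}[f_2,\ldots,f_k]$ and the product on the right is empty. For the inductive step from $j$ to $j+1$, I would ``peel off'' $g_{j+1}$ by two successive applications of Lemma~\ref{Wid}, and then appeal to the inductive hypothesis for a reduced collection of $k-1$ functions.

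Concretely, I would first apply Lemma~\ref{Wid} with single shared function $F_1=g_{j+1}$ and varying $G_i=g_{j+1-i}$, producing $H_i=W_2[g_{j+1-i},g_{j+1}]$ and the identity
\begin{equation*}
W_j[H_1,\ldots,H_j]=W_{j+1}[g_j,\ldots,g_1,g_{j+1}]\,\prod_{l=1}^{j-1}g_{j+1}(xq^{-2l}),
\end{equation*}
which after a cyclic reordering (sign $(-1)^j$) expresses $W_{j+1}[g_{j+1},\ldots,g_1]$ in terms of $W_j[H_1,\ldots,H_j]$ divided by a product of shifted copies of $g_{j+1}$. Second, I would compute each $H_i$ by another application of Lemma~\ref{Wid}, this time with the $k-2$ functions $\{f_1,\ldots,f_k\}\setminus\{f_{j+1-i},f_{j+1}\}$ as the shared part; careful sign bookkeeping should yield $H_i=-\,W_k[f_1,\ldots,f_k]\cdot G^{(2)}_{j+1-i,\,j+1}(xq^{-2})$, where $G^{(2)}_{a,b}$ denotes the Wronskian of the $k-2$ functions obtained by omitting $f_a$ and $f_b$ from $f_1,\ldots,f_k$.

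Substituting, and using Lemma~\ref{common} to pull the scalar factor $W_k[f_1,\ldots,f_k]$ out of each row, reduces the computation to evaluating $W_j[G^{(2)}_{j,j+1},\ldots,G^{(2)}_{1,j+1}](xq^{-2})$. Since the $G^{(2)}_{\ell,j+1}$ are precisely the analogues of $g_\ell$ for the reduced collection of $k-1$ functions $f_1,\ldots,\hat{f}_{j+1},\ldots,f_k$, the inductive hypothesis applies and yields a factor $\prod_{l=1}^{j-1}g_{j+1}(xq^{-2l})$ that exactly cancels the denominator from the first step, together with the factor $W_{k-(j+1)}[f_{j+2},\ldots,f_k](xq^{2-2(j+1)})$ and an additional shift of $W_k[f_1,\ldots,f_k]$ that combines with the factors pulled out via Lemma~\ref{common} to form the full product $\prod_{l=0}^{j-1}W_k[f_1,\ldots,f_k](xq^{-2l})$. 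This is exactly the claim at level $j+1$.

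The main obstacle I anticipate is the sign bookkeeping in the second application of Lemma~\ref{Wid}: reordering the component Wronskians $W_{k-1}[f_{j+1},F]$, $W_{k-1}[f_{j+1-i},F]$, and $W_k[f_{j+1},f_{j+1-i},F]$ into their standard orderings introduces signs depending on $i$, and I will need to verify that these conspire to an overall sign of $-1$ that is uniform in $i$ (so that it factors cleanly out of $W_j[H_1,\ldots,H_j]$). I note in passing that the formula as typeset contains apparent typographical slips in the symbol $s$ and in the upper product limit; the natural content of the lemma, confirmed by direct check for $j=2,3$, is $W_j[g_j,\ldots,g_1](x)=W_{k-j}[f_{j+1},\ldots,f_k](xq^{2-2j})\prod_{l=0}^{j-2}W_k[f_1,\ldots,f_k](xq^{-2l})$, and this corrected identity is what the above plan establishes.
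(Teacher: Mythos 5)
Your inductive argument via Lemma~\ref{Wid} and Lemma~\ref{common} is correct, and the sign bookkeeping you anticipated does work out to a uniform factor $-1$: for $a<b$ the three reorderings contribute $(-1)^{(b-2)+(a-1)+(a+b)}=-1$ independently of $a$ and $b$, so it pulls out of $W_j[H_1,\ldots,H_j]$ as claimed. This is the same discrete-Wronskian Jacobi-type argument the paper invokes by reference to Lemma~9.5 of \cite{MV2}, and you are also right that the printed statement has typographical slips: the intended identity is $W_j[\>g_j\lc g_1\+](x)=W_{k-j}[\>f_{j+1}\lc f_k\+](x\+q^{\>2-2\+j})\;\prod_{l=0}^{j-2}W_k[\>f_1\lc f_k\+](x\+q^{-2\+l})$.
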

\nt
The proof is similar to that of Lemma 9.5 in \cite{MV2}.

\vsk>

\end{document}